\documentclass[review]{elsarticle}
\usepackage[utf8]{inputenc}
\usepackage{amsmath,amssymb} 
\usepackage{graphicx}
\usepackage{dsfont}
\usepackage{upgreek}
\usepackage{textcomp,comment}
\usepackage{braket,bigints}
\usepackage[margin=1in]{geometry}
\usepackage{amsthm}
\usepackage{mathrsfs}
\usepackage{mathtools}
\usepackage[table,svgnames]{xcolor}
\usepackage{graphicx}
\usepackage{float}

\usepackage[toc,page]{appendix}
\usepackage{enumitem}

\usepackage{tikz,soul}
\usetikzlibrary{arrows}
\tikzset{
  treenode/.style = {shape=rectangle, rounded corners,
                     draw, align=center,
                     top color=white, bottom color=blue!20},
  root/.style     = {treenode, font=\Large, bottom color=red!30},
  env/.style      = {treenode, font=\ttfamily\normalsize},
  dummy/.style    = {circle,draw}
}

\usetikzlibrary{calc,fadings,decorations.pathreplacing,arrows,
  datavisualization.formats.functions,shapes.geometric}
\usetikzlibrary{hobby}
\usetikzlibrary{positioning,calc}
\usetikzlibrary{arrows,decorations.markings}

\usepackage{cleveref}

\crefname{appsec}{appendix}{appendices}

\usepackage[mathlines]{lineno}

\newcommand{\trevor}[1]{\textcolor{orange}{#1}}
\newcommand{\cb}[1]{\textcolor{blue}{#1}}

\numberwithin{equation}{section}


\newtheoremstyle{newdefinition}
{20pt}
{20pt}
{}
{}
{\bfseries}
{.}
{.5em}
{}

\newtheorem{theorem}{Theorem}[section]
\newtheorem{lemma}{Lemma}[section]
\newtheorem{corollary}{Corollary}[section]
\newtheorem{proposition}{Proposition}[section]

\newtheorem{hypothesis}{Hypothesis}
\Crefname{hypothesis}{Hypothesis}{Hypotheses}
\crefname{hypothesis}{hypothesis}{hypotheses}

\theoremstyle{newdefinition}
\newtheorem{definition}{Definition}[section]

\theoremstyle{remark}
\newtheorem{remark}{Remark}[section]






\def\boxit#1{\vbox{\hrule\hbox{\vrule\kern6pt
			\vbox{\kern6pt#1\kern6pt}\kern6pt\vrule}\hrule}}

\def\bse{\begin{eqnarray*}}
	\def\ese{\end{eqnarray*}}
\def\be{\begin{eqnarray}}
	\def\ee{\end{eqnarray}}
\def\bq{\begin{equation}}
	\def\eq{\end{equation}}
\def\bse{\begin{eqnarray*}}
	\def\ese{\end{eqnarray*}}

\newcommand{\corb}[1]{\textcolor{blue}{#1}}

\newcommand{\bbR}{\mathbb{R}}

\newcommand{\bbN}{\mathbb{N}}

\newcommand{\bbC}{\mathbb{C}}

\newcommand{\bbP}{\mathbb{P}}

\newcommand{\bx}{\mathbf{x}}
\newcommand{\by}{\mathbf{y}}
\newcommand{\bv}{\mathbf{v}}
\newcommand{\bz}{\mathbf{z}}

\newcommand{\bbe}{\mathbf{e}}

\newcommand{\bphi}{\boldsymbol{\phi}}

\newcommand{\mcA}{{\mathcal A}}

\newcommand{\mcC}{{\mathcal C}}

\newcommand{\mcE}{{\mathcal E}}
\newcommand{\mcF}{\mathcal{F}}

\newcommand{\mcI}{{\mathcal I}}

\newcommand{\mcK}{{\mathcal K}}

\newcommand{\mcP}{{\mathcal P}}

\newcommand{\mcS}{\mathcal{S}}

\newcommand{\ii}{\mathbf{i}}

\newcommand{\pp}{\mathbf{p}}


\newcommand{\bbNset}{{\mathbb N}}

\newcommand{\eset}[1]{{\mathbb E} \left[ #1 \right] }


\newcommand{\Real}{\mathop{\text{\rm Re}}}
\newcommand{\Imag}{\mathop{\text{\rm Im}}}

\newcommand{\func}{u}

\definecolor{darkgreen}{rgb}{0, 0.6, 0}
\definecolor{airforceblue}{rgb}{0.36, 0.54, 0.66}
\definecolor{applegreen}{rgb}{0.55, 0.71, 0.0}
\definecolor{asparagus}{rgb}{0.53, 0.66, 0.42}
\definecolor{cadetblue}{rgb}{0.37, 0.62, 0.63}
\definecolor{cambridgeblue}{rgb}{0.64, 0.76, 0.68}
\definecolor{olivine}{rgb}{0.6, 0.73, 0.45}
\definecolor{rufous}{rgb}{0.66, 0.11, 0.03}
\definecolor{sangria}{rgb}{0.57, 0.0, 0.04}
\definecolor{neworange}{rgb}{1, 0.64, 0}
\definecolor{flowblue}{rgb}{0.4471,    0.6235,    0.8118}
\definecolor{lightsteelblue}{RGB}{176,196,222}
\definecolor{brownblue}{RGB}{222, 202, 176}




\pagenumbering{arabic}

\linespread{1}


\theoremstyle{remark}


\begin{document}

\begin{frontmatter}

\journal{Computer and Mathematics with Applications}



\title{Analytic regularity of strong solutions for the complexified 
stochastic non-linear Poisson Boltzmann Equation}


\author[addr]{Brian Choi}
\ead{choigh@bu.edu}

\author[addr]{Jie Xu}
\ead{xujie@bu.edu} 

\author[addr]{Trevor Norton}
\ead{nortontm@bu.edu} 

\author[addr]{Mark Kon}
\ead{mkon@bu.edu}

\author[addr]{Julio E. Castrill\'on-Cand\'as \corref{mycorrespondingauthor}}
\ead{jcandas@bu.edu}

\tnotetext[t1]{This material is based upon work supported by the
  National Science Foundation (NSF) under Grant No. 1736392.  Research
  reported in this technical report was supported in part by the
  National Institute of General Medical Sciences (NIGMS) of the
  National Institutes of Health under award number 1R01GM131409-03. The first author was partially funded by the NSF/RTG postdoctoral fellowship DMS-1840260.}

\address[addr]{Department of Mathematics and Statistics, 
  Boston University, 665 Commonwealth Avenue, Boston, 02215, MA, USA}
 
\cortext[mycorrespondingauthor]{Corresponding author}

\begin{abstract}
Semi-linear elliptic Partial Differential Equations (PDEs) such as the non-linear Poisson Boltzmann Equation (nPBE) is highly relevant for non-linear electrostatics  in computational biology and chemistry. It is of particular importance for modeling  potential fields from molecules in  solvents or plasmas with stochastic fluctuations.  The extensive applications include ones in condensed matter and solid state physics, chemical physics, electrochemistry, biochemistry, thermodynamics, statistical mechanics, and materials science, among others.  In this paper we study the complex analytic properties of semi-linear elliptic Partial  Differential Equations with respect to random fluctuations on the domain. 
We first prove the existence and uniqueness of the nPBE on a bounded domain in $\mathbb{R}^3$. This proof relies on the application of a contraction mapping 
reasoning, as the standard convex optimization argument for the deterministic nPBE no longer applies.
Using the existence and uniqueness result we subsequently show that solution to the nPBE admits
an analytic extension onto a well defined region in the complex hyperplane with respect to the number 
of stochastic variables. Due to the analytic extension, stochastic collocation theory for
sparse grids predict algebraic to sub-exponential convergence rates with respect to the number of
knots. A series of numerical experiments with sparse grids is consistent with this prediction
and the analyticity result. Finally, this approach readily extends to a wide class of semi-linear elliptic PDEs.
\end{abstract}

\begin{keyword}
Semi-linear Elliptic PDEs, dynamical systems, elliptic theory, linearization, fixed point theorem
\MSC[2020] 35A01,  35A02,  35A20, 35G30, 65N35, 65N12, 65N15, 65C20  
\end{keyword}


\end{frontmatter}

\section{Introduction.}

Linear elliptic Partial Differential Equations (PDEs) have long been used to
model problems in chemistry, physics, engineering and biology
\cite{evans2010partial}. For example, the Poisson equation has been 
used extensively
in electrical engineering to model potential fields in semi-conductors, leading to the capacitance extraction problem. However,
as the physical scales are reduced, uncertainties in the shapes of 
semi-conductors leads to impedance matching problems \cite{Castrillon2016,Zhenhai2005}.
Moreover, many of these paradigms assume that the charge distribution (in
a dielectric) is contained in a vacuum. 
This is not the case for many important problems that lead to non-linear
electrostatics.

Non-linear elliptic PDEs have also been of much interest, with applications to  non-linear electrostatics, a subject that has gained much interest in modeling potential fields from molecules in solvents or plasma. Due to the interaction of 
the molecule and solvent (or plasma) a more accurate model is the 
non-linear elliptic PDE known as the non-linear Poisson Boltzmann Equation (nPBE). This
model is well known in Molecular Dynamics (MD) simulations
and chemical applications \cite{Gray2018,Stein2019}. As an example, it has been used
in modeling electrode-electrolyte interfaces \cite{Sundararaman2017,Sundararaman2018,Nattino2019}
and many related software packages have been developed \cite{Ringe2016,Jinnouchi2008,Mathew2014,Baker2001}.
As pointed out in \cite{Gray2018} the nPBE has been investigated in 
biophysics \cite{Ohshima2010,Phillips2009,Andelman1995,Benedek1979},
surface science \cite{Israelachvili1991,Butt2013}, chemical
physics \cite{Frenkel1946,Kirkwood1961}, polymer
physics \cite{Muthukumar2011}, plasma
physics \cite{Morrison1967,Ichimaru1984}, solid state
physics \cite{Ashcroft1976,Kittel1996}, condensed matter
physics \cite{Chaikin1995}, many-body
theory \cite{Brout1963,Giuliani2003}, thermodynamics
\cite{Glasstone1947,Lewis1961},
statistical 
mechanics \cite{Blum1992,Landau1958,McQuarrie1976},
liquid state theory 
\cite{Hansen2013,Fawcett2004,March1984}
, electrolyte solutions \cite{Barthel1998,Friedman1962},
electrochemistry \cite{Schmickler2010,Bockris1970,Sparnaay1972}, soft
matter \cite{Doi2016,Dean2014,Holm2001,Poon2006}, physical chemistry
\cite{Berry2000,Atkins1986}, biophysical chemistry \cite{Ohshima2010,Edsall1958},
biochemistry \cite{Bergethon1998}, medical physics
\cite{Hobbie1988}, physiology \cite{Bayliss1959,Hober1947}, molecular biology \cite{Sneppen2005}, colloids
\cite{Butt2013,Israelachvili1991,Verwey1948,Evans1999,Hunter2001,Lyklema1991},
applied mathematics \cite{Cai2013,Rubinstein1990,Li2009},
materials science \cite{Chavazaviel1999} and
technology \cite{McKelvey1966}.

The nPBE is given by
\begin{equation}\label{npb}
\begin{aligned}
	-\nabla \cdot (\epsilon(x) \nabla u)  + \kappa(x)^2 \sinh u &= f, & &x\in \Omega,\\
	u&= g, & &x \in \partial \Omega,
\end{aligned}
\end{equation}
where \(u\) is the nondimensionalized potential, \(\epsilon\) is the dielectric, and \( \kappa \) is the Debye-H\"uckel
parameter.  The nPBE has found important applications in protein interactions and molecular dynamics \cite{Padhorny2016,Neumaier1997}. In \Cref{nPBE:fig1} an example of the electrostatic potential field is rendered from the solution of the nPBE by using the Adaptive Poisson Boltzmann Solver (APBS) \cite{Baker2001}  for E. Coli RHo Protein. (PDB: 1A63 \cite{Berman2000}).  However, the mathematical properties of the nPBE are less understood and significantly more complicated than the linear case.

In practice may of these processes involve uncertainty in the dielectric material
and the geometry leading to uncertainty in the potential field, so that the solution of the
nPBE becomes stochastic. If we assume that uncertainty
in the domain is parameterized by $N$ random variables, then the solution of the nPBE is high dimensional 
and in many cases intractable.  However,
if sufficient complex analytic regularity of the solution with respect to the random variables
exists, then it can shown that with a stochastic collocation method and a sparse grid
polynomial representation, sub-exponential convergence is achieved
    \cite{nobile2008a,nobile2008b,Castrillon2016,Castrillon2021}.

\begin{figure}[htb]
	\centering
	\includegraphics[height = 8cm, width = 8cm, trim=6cm 6cm 6cm 6cm, clip]{./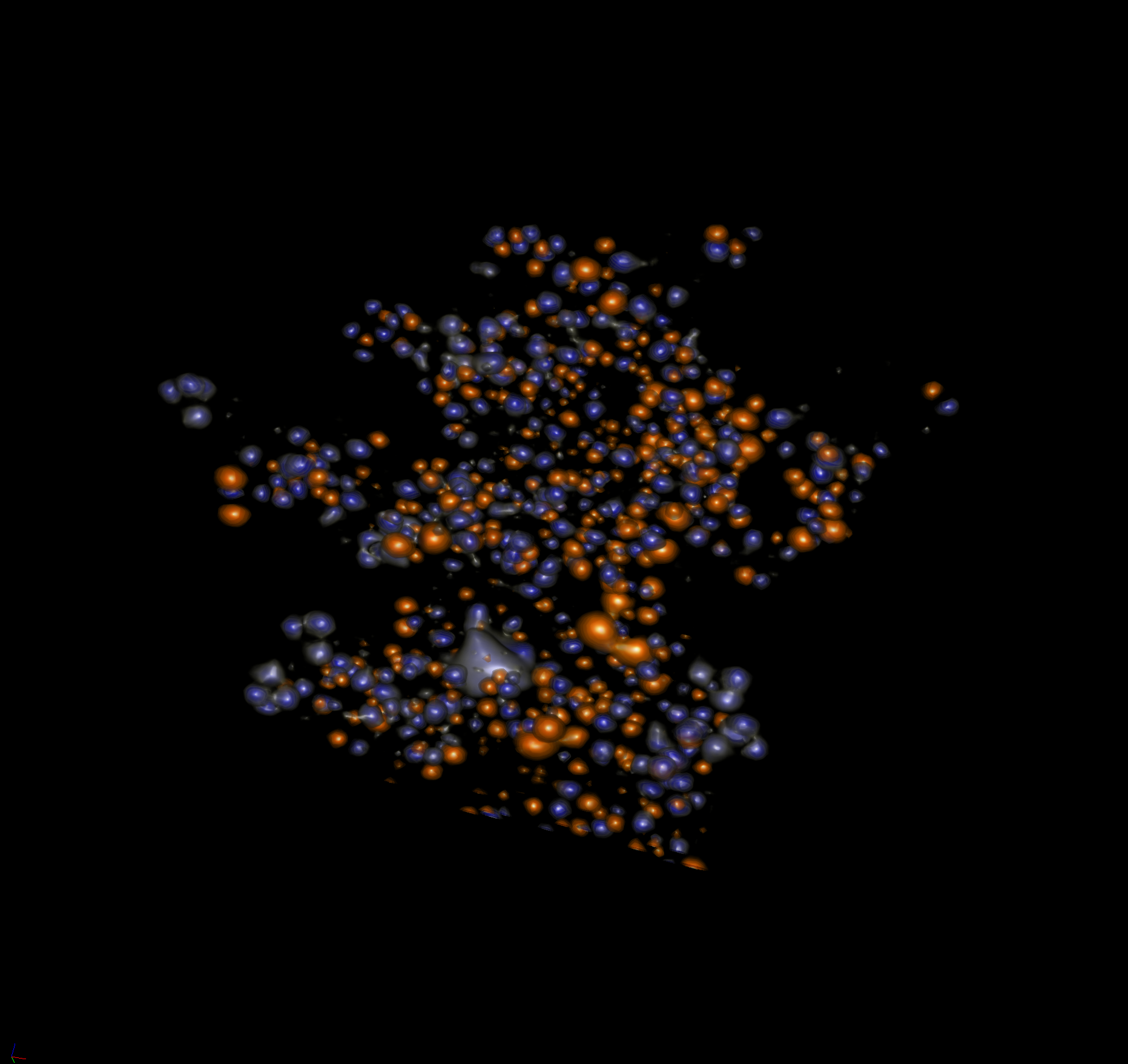}
	\caption{Electrostatic potential field obtained from the solution of the nPBE for the  
		RNA binding domain of E. Coli RHO factor. The potential fields were created with the Adaptive 
		Poisson Boltzmann Solver \cite{Baker2001} rendered with  VolRover \cite{Bajaj2003,Bajaj2005}.  The positive
		and negative potential are rendered with blueish and orange/reddish colors  respectively.}
	\label{nPBE:fig1}
\end{figure}

In \cite{Heitzinger2018} the authors investigate the application of 
stochastic collocation and stochastic Galerkin to the nPBE. In this paper the
nPBE is posed as semi-linear stochastic boundary valued problem and it
is proved that a unique solution exists, therefore extending the existence
and uniqueness result for the determistic case from  \cite{Holst1994}.
However, there are no analytic
regularity results, thus no convergence rates are derived in implementing
the stochastic collocation method.
In this paper the main contributions are:
\begin{itemize}
    \item \textbf{Proposition} \ref{Schauder}: We show existence and uniqueness of the complexified nPBE.
    \item \textbf{Theorem} \ref{mainresult2}; We also show that an analytic extension of the solution of the nPBE exists with respect to the random variables that describe the domain. 
\end{itemize}
The consequence of our results is that the stochastic nPBE can be solved with at least algebraic convergence
with respect to the dimensionality of the sparse grid.
Numerical results to study the convergence rates with respect to the dimensionality of the 
sparse grid are consistent with the analyticity results of \textbf{Theorem} \ref{mainresult2}.

This paper is organized as follows: Section \ref{Preliminaries} introduces notations and the 
fundamental mathematical background. In \Cref{exuniq}, the existence and uniqueness results are proved.
In \Cref{analyticreg} the existence and uniqueness results are used to show that 
 the solution to the nPBE admits an analytic extension in a well defined region in $\bbC^{N}$ 
In \Cref{polynomial} sparse grid tensor product polynomial approximations are discussed.
The convergence rates predicted from the error bounds for sparse grids are consistent with
the numerical experiments are performed in \Cref{numericalresults}. In \Cref{conclusion} final
remarks and future work are discussed. In addition,
In \Cref{constantest}, analytic estimates for the various constants used in this paper are shown
and in \Cref{appendixB}, a discussion on the failure of uniqueness of solutions is given.


\section{Preliminaries.}\label{Preliminaries}

\subsection{Fractional Sobolev Spaces}

For $k \in \mathbb{N}\cup \{0\}$ and $\Omega \subseteq \mathbb{R}^d$ open, bounded with a smooth boundary,
\[
	H^k(\Omega) = \{u \in L^2(\Omega): \lVert u \rVert_{H^k(\Omega)}<\infty\} \quad \text{with } \lVert u \rVert_{H^k(\Omega)} \coloneqq \Big(\sum_{|\alpha| \leq k} \lVert \partial_\alpha u \rVert_{L^2}^2\Big)^{\frac{1}{2}}.
\]
For $s^\prime \in [0,1)$, recall the Gagliardo seminorm
\[
	[u]_{s^\prime} \coloneqq \Big(\int_\Omega\int_\Omega \frac{|u(x)-u(y)|^2}{|x-y|^{d+2s^\prime}} dxdy\Big)^{\frac{1}{2}},
\]
by which fractional Sobolev spaces are defined. Given $s \geq 0$, we have $s = k + s^\prime$ for $k \in \mathbb{N}\cup\{0\}$ and $s^\prime \in [0,1)$. Define
\[
	H^s(\Omega) = \{u \in L^2(\Omega): \lVert u \rVert_{H^s} < \infty\}\quad \text{with } \lVert u \rVert_{H^s} \coloneqq \Big(\lVert u \rVert_{H^k}^2 + [u]_{s^\prime}^2 \Big)^{\frac{1}{2}},
\]
and $H^s_0(\Omega)$ to be the closure of $C^\infty_c(\Omega)$, the collection of smooth and compactly supported functions in $\Omega$, under $\lVert \cdot \rVert_{H^s}$. For a more thorough discussion on this material including the Sobolev spaces on the boundary $\partial \Omega$ and the negative-order Sobolev spaces, see \cite[Chapter 3]{mclean2000strongly}.

As for the regularity of boundary data, recall that $g:\partial \Omega \rightarrow \mathbb{C}$ and $w\in L^2(\Omega)$ such that $w=g$ in the trace sense. To invoke the elliptic regularity theorem, let $w \in H^2(\Omega)$. To motivate this assumption, consider the linear elliptic PDE $Lu=f$ on $\Omega$ with $u=g$ on $\partial \Omega$. Assuming that $w$ exists, a formal calculation reveals that $\tilde{u}+w$ is the solution where $\tilde{u}$ satisfies $L\tilde{u} = f-Lw$ on $\Omega$ with $\tilde{u}=0$ on $\partial \Omega$. If $f \in L^2(\Omega)$, we desire $Lw \in L^2(\Omega)$ to ensure that $\tilde{u}$ has two more derivatives than $f-Lw$. That $w \in H^2(\Omega)$ follows by assuming $g \in H^{\frac{3}{2}}(\partial \Omega)$.

\begin{lemma}\cite[Theorem 3.37]{mclean2000strongly}\label{trace}
	Let $T:C^\infty(\overline{\Omega})\rightarrow C^\infty (\partial \Omega)$ be given by $u \mapsto u|_{\partial \Omega}$. If $k \in \mathbb{N}$ and $\Omega \in C^{k-1,1}$, then $T$ uniquely extends to a surjective bounded linear operator from $H^s(\Omega)$ to $H^{s-\frac{1}{2}}(\partial\Omega)$ for all $s \in (\frac{1}{2},k]$. The trace map $T$ has a right-continuous inverse. 
\end{lemma}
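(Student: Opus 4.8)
The plan is to follow the classical route for trace theorems: localize near $\partial\Omega$ with a partition of unity, flatten the boundary by the $C^{k-1,1}$ charts, reduce everything to a Fourier-side estimate on the half-space $\mathbb{R}^d_+=\{x_d\ge 0\}$, and then reassemble, constructing an explicit right inverse along the way. First I would fix a finite atlas: since $\Omega\in C^{k-1,1}$, cover $\partial\Omega$ by bounded open sets $U_1,\dots,U_M$ equipped with $C^{k-1,1}$ diffeomorphisms $\Psi_j$ carrying $U_j\cap\overline\Omega$ onto a half-ball and $U_j\cap\partial\Omega$ onto the flat piece $\{x_d=0\}$; add an interior patch $U_0\Subset\Omega$, take a smooth partition of unity $\{\chi_j\}_{j=0}^M$ subordinate to $\{U_j\}$, and record the fact that multiplication by $\chi_j$ and composition with $\Psi_j^{\pm 1}$ are bounded operators on $H^s$ for $0\le s\le k$. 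This last fact is precisely where the $C^{k-1,1}$ hypothesis is used, and it is the most delicate bookkeeping step, because at the top endpoint $s=k$ a merely Lipschitz $(k-1)$-st derivative of the change of variables is exactly what is needed (and all that is available) for the chain rule to keep the transformed function in $H^k$.

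Second, on the half-space I would prove, for $u\in C_c^\infty(\overline{\mathbb{R}^d_+})$ and $s\in(\tfrac12,k]$, the trace inequality
\[
\|u(\cdot,0)\|_{H^{s-1/2}(\mathbb{R}^{d-1})}\le C\,\|u\|_{H^s(\mathbb{R}^d_+)}.
\]
Writing $\hat u(\xi',x_d)$ for the tangential Fourier transform and using
\[
|\hat u(\xi',0)|^2=-\int_0^\infty\partial_{x_d}|\hat u(\xi',x_d)|^2\,dx_d\le\int_0^\infty\Big(\langle\xi'\rangle\,|\hat u|^2+\langle\xi'\rangle^{-1}|\partial_{x_d}\hat u|^2\Big)\,dx_d,
\]
I would multiply by $\langle\xi'\rangle^{2s-1}$ and integrate in $\xi'$; for integer $s$ this produces exactly the terms controlled by $\|u\|_{H^s}^2$ via Plancherel, and for fractional $s$ I would run the same estimate through the Fourier (equivalently, Gagliardo) characterization of $H^s$ and $H^{s-1/2}$ recalled in the Preliminaries. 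Density of $C_c^\infty(\overline{\mathbb{R}^d_+})$ in $H^s(\mathbb{R}^d_+)$ then extends the pointwise restriction to a bounded operator; transporting back through the $\Psi_j$ and summing $\chi_j u$ over $j$ (the interior term $\chi_0 u$ having zero trace) yields a bounded $T:H^s(\Omega)\to H^{s-1/2}(\partial\Omega)$, and uniqueness of this extension is automatic since $C^\infty(\overline\Omega)$ is dense in $H^s(\Omega)$.

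Third, for surjectivity and the continuous right inverse (the ``right-continuous inverse'' of the statement) I would exhibit an explicit lifting on the half-space: for $h\in H^{s-1/2}(\mathbb{R}^{d-1})$ set
\[
(Eh)(x',x_d)=\mathcal F^{-1}_{\xi'}\big[\phi(\langle\xi'\rangle x_d)\,\hat h(\xi')\big],\qquad \phi\in C_c^\infty([0,\infty)),\ \phi(0)=1.
\]
A direct Plancherel computation gives $\|Eh\|_{H^s(\mathbb{R}^d_+)}\le C\,\|h\|_{H^{s-1/2}(\mathbb{R}^{d-1})}$ and $T(Eh)=h$. Gluing the chart-wise liftings with the partition of unity produces a bounded linear $\mathcal E:H^{s-1/2}(\partial\Omega)\to H^s(\Omega)$ with $T\mathcal E=\mathrm{id}$, which simultaneously delivers surjectivity of $T$ and a bounded (hence continuous) right inverse. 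The main obstacle, as noted, is not the Fourier estimate itself but controlling the nonsmooth change of variables at the endpoint $s=k$ with only $C^{k-1,1}$ regularity of the boundary, and checking that the localization does not generate uncontrolled lower-order remainder terms; once those mapping properties are in hand, the rest is routine.
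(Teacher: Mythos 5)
The paper itself offers no proof of this lemma: it is quoted verbatim from McLean \cite[Theorem 3.37]{mclean2000strongly}, so there is no internal argument to compare against. Your outline is the standard textbook route (localize, flatten with $C^{k-1,1}$ charts, half-space Fourier estimate, explicit lifting as a bounded right inverse), and the structural points you flag — boundedness of multiplication by cutoffs and of composition with $C^{k-1,1}$ changes of variables on $H^s$ for $0\le s\le k$, density of $C^\infty(\overline\Omega)$ for uniqueness, gluing the chart-wise liftings for surjectivity — are exactly the right ingredients and are where the boundary regularity enters.

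There is, however, one genuine gap in the key half-space estimate. After multiplying your integration-by-parts bound by $\langle\xi'\rangle^{2s-1}$, the second term you must control is $\int_{\mathbb{R}^{d-1}}\int_0^\infty \langle\xi'\rangle^{2s-2}|\partial_{x_d}\hat u(\xi',x_d)|^2\,dx_d\,d\xi'$, and in full-space Fourier variables this requires the pointwise bound $\xi_d^2\,\langle\xi'\rangle^{2s-2}\lesssim \langle\xi\rangle^{2s}$. That bound holds for $s\ge 1$ but fails for $s\in(\tfrac12,1)$ (take $\xi'=0$ and $|\xi_d|\to\infty$: the left side grows like $\xi_d^2$ while the right grows like $|\xi_d|^{2s}$), so ``running the same estimate through the Fourier characterization'' does not deliver the lemma on the whole claimed range $s\in(\tfrac12,k]$. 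The standard repair is to drop the AM--GM splitting and instead extend $u$ to $\mathbb{R}^d$, write $\hat u(\xi',0)$ as the $\xi_d$-integral of the full Fourier transform, and apply Cauchy--Schwarz with the weight $\langle\xi\rangle^{2s}$, using $\int_{\mathbb{R}}\langle\xi\rangle^{-2s}\,d\xi_d = c_s\langle\xi'\rangle^{1-2s}$ for $s>\tfrac12$; this gives the trace inequality uniformly in $s>\tfrac12$ (alternatively, prove integer cases and interpolate). Your lifting operator and the surjectivity argument are fine as stated; with the corrected half-space estimate the proposal is a complete and standard proof of the cited result.
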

The map $g\mapsto w$ is not unique although one can uniquely solve the Laplace equation
\begin{equation}\label{poisson}
\begin{split}
	\Delta w&=0,\: x \in \Omega\\
	w&=g,\ x \in \partial \Omega,
\end{split}    
\end{equation}
and obtain an explicit form for $w$ as an integration against the Poisson kernel, and thereby establish a map $T^{-1}g\coloneqq w$, the definition of inverse trace adopted in this paper. It can be shown that $T^{-1}: H^{k-\frac{1}{2}}(\partial \Omega)\rightarrow H^k(\Omega)$ defines a bounded linear operator where the operator norm depends on $k\in \mathbb{N}$ and $\Omega$. 

\subsection{Principal Eigenvalue of the Dirichlet Laplacian}

Let $\Omega \subseteq \mathbb{R}^d$ be open, bounded, and convex with a smooth boundary. Let $|\Omega|$ denote the Lebesgue measure of $\Omega$ and $d_\Omega \coloneqq \sup_{x,y\in \Omega}|x-y|$, the diameter of $\Omega$. Let $\left\{\lambda_i\right\}_{i=1}^\infty$ be the eigenvalues of the (negative) Dirichlet Laplacian $-\Delta$, the Laplacian operator restricted to functions vanishing on the boundary defined via the Friedrich extension with the linear ordering
\begin{equation*}
0<\lambda_1<\lambda_2\leq \lambda_3\leq \dots    
\end{equation*}

The principal eigenvalue is given by
\[
	\lambda_1 = \min_{u \in H^1_0(\Omega)\setminus \{0\}} \frac{\int_{\Omega} |\nabla u|^2}{\int_\Omega |u|^2}.    
\]
The variational formula above directly implies the Poincar\'e inequality given below:
\begin{equation}\label{poincare2}
	\lVert u \rVert_{L^2(\Omega)}^2\leq \lambda_1^{-1}\lVert \nabla u \rVert_{L^2(\Omega)}^2, \quad \forall u \in H^1_0(\Omega).
\end{equation}

\section{Existence and uniqueness.}\label{exuniq}

\subsection{Problem Set-up}\label{sketch_assumptions}

Re-expressing \eqref{npb} in the form 
\begin{equation*}
    F(u) := Lu + N(u) = f,
\end{equation*}
where $f \in L^2(\Omega)$ and $F:H^2(\Omega)\rightarrow L^2(\Omega)$ is decomposed into the linear and nonlinear parts, the desired solution $u$ is realized as a fixed point to the map
\begin{equation*}
    u \mapsto L^{-1}( f - N(u)).
\end{equation*}

Appropriate restrictions are imposed on the given parameters to account for the possible exponential growth of \(\| N(u) \|\). 

\begin{hypothesis}\label{h1}
	The domain \(\Omega\subset \mathbb R^d\) is open, bounded, and convex with smooth (at least \(C^2\)) boundary.
\end{hypothesis}
\begin{hypothesis}\label{h2}
	The function \(\epsilon \in W^{1,\infty}(\Omega, \mathbb C^{d^2})\)
	, where \(\epsilon^{ij} = \epsilon^{ji}\) for \(1 \leq i,j \leq d\), satisfies the following uniform ellipticity condition: there exists \(\theta > 0\) such that 
	\begin{equation}\label{ellipticity}
		\Real\left[ \sum_{i,j=1}^d \epsilon^{ij}(x) \xi_i \overline{\xi_j} \right] \geq \theta |\xi|^2
	\end{equation}
	for all \(\xi\in \mathbb{C}^d\) and for a.e.\ \(x\in \Omega\).
\end{hypothesis}
\begin{hypothesis}\label{h3}
	There exists \(\mu \geq 0\) such that \(\kappa^2 \in L^\infty(\Omega)\) satisfies 
	\begin{equation}\label{kappa_bound}
		\Real\left[\mathrm{\kappa^2(x)}\right] \geq -\mu
	\end{equation}
	for a.e.\ \(x\in \Omega\). Moreover, the parameters \(\theta\) and \(\mu\) characterized by \Cref{ellipticity,kappa_bound}, respectively, satisfy the inequality 
	\begin{equation}
		\frac\mu\theta < \lambda_1,
	\end{equation}
	where \(\lambda_1\) is the principal eigenvalue of \(-\Delta\) on \(\Omega\).
\end{hypothesis}

\begin{remark}
\Cref{h2,h3} are sufficient to guarantee a unique strong solution within a small ball in \(H^2(\Omega)\), and omitting these hypotheses may result in non-uniqueness. If the parameters are allowed to continuously vary until \Cref{h2,h3} no longer hold, then there may be a bifurcation of the unique small solution. See \cref{appendixB} for details.
\end{remark}


\subsection{Definitions}

\begin{definition}\label{npbweak}
	A function $u \in H^1(\Omega)$ is a \emph{weak solution} to \Cref{npb} if for all $\phi \in H^1_0(\Omega)$, we have
	\begin{equation}
	\begin{split}
	\label{npbweak2}
		\int_\Omega (\epsilon\nabla u) \cdot \overline{\nabla \phi} + \int_\Omega \kappa^2\sinh u \cdot\overline{\phi} &=\int_\Omega f \overline{\phi}\\
		u|_{\partial\Omega}&=g,
		\end{split}
	\end{equation}
	where the equality at the boundary is in the trace sense; if $w \in H^1(\Omega)$ whose trace is $g \in H^{\frac{1}{2}}(\partial \Omega)$, a weak solution $u$ satisfies $u-w \in H^1_0(\Omega)$. If a weak solution $u$ is twice weakly-differentiable and satisfies \Cref{npb} pointwise almost everywhere, then we say $u$ is a \emph{strong solution}. 
\end{definition}

A weak solution that is in $H^2(\Omega)$ satisfies the strong form a.e.\ if one can undo the integration by parts in the first term of \Cref{npbweak2}. This is possible since \(\epsilon\in W^{1,\infty}(\Omega, \mathbb{C}^{d^2})\) . If $u \in H^2(\Omega)$ is a weak solution, then one can undo the integration by parts since $\epsilon \nabla u \in H^1(\Omega)$. Indeed for each $1 \leq i,k \leq d$,
\[
	\lVert \partial_k \sum_{j = 1}^{d} (\epsilon^{ij}\partial_j u)\rVert_{L^2} \leq C \lVert \epsilon \rVert_{W^{1,\infty}}\lVert u \rVert_{H^2}.
\]

Setting up the functional equation given in \Cref{sketch_assumptions}, the non-linear operator \(F:H^2(\Omega)\to L^2(\Omega)\) is given by
\begin{equation}
\begin{split}
F(u) &= -\nabla\cdot(\epsilon \nabla u) + \kappa^2 \sinh(u)\\
Lu &= -\nabla\cdot(\epsilon\nabla u) + \kappa^2 u \label{set-up}\\
N(u) &= \kappa^2\sinh(u) = \kappa^2 \sum_{k=2}^\infty n_{k} u^{k},    
\end{split}
\end{equation}
where $L$ is a uniformly elliptic second-order differential operator by \Cref{h2} and $n_{k} = \frac{1}{k!}$ for odd \(k\) and $n_k = 0$ for even \(k\). Note that when \(d\leq 3\), \(H^2(\Omega)\) is an algebra and so \(u^{k}\in H^2(\Omega)\) and \(N(u)\) is well-defined. 

To simplify the
presentation, the relevant operator norms are defined. 
Explicit estimates for these constants and their  dependencies are derived in detail in Appendix A.

\begin{definition}\label{important_constants}
The Sobolev embedding for $s>\frac{d}{2}$ gives that \(H^s(\Omega) \hookrightarrow L^\infty(\Omega)\) \cite[Theorem 3.26]{mclean2000strongly}. The constant $C_S = C_S(s,\Omega,d)>0$ is then defined as the norm of the inclusion operator from \(H^s(\Omega)\) into \(L^\infty(\Omega)\):
\begin{equation*}
    C_S:= \inf\{ C>0 : \|u\|_{L^\infty(\Omega)} \leq C \|u\|_{H^s(\Omega)},\ \forall u \in H^s(\Omega)\}.
\end{equation*}
The linear operator $L$ in \Cref{set-up} takes functions in \(H^2(\Omega)\cap H^1_0(\Omega)\) to functions in \(L^2(\Omega)\) We shall denote \(C_D>0\) to be the norm of \(L\) with respect to these spaces:
\begin{equation*}
    C_D := \inf\{ C>0 : \|Lu\|_{L^2(\Omega)} \leq C \|u\|_{H^2(\Omega)},\ \forall u \in H^2(\Omega)\}.
\end{equation*}
By \Cref{h3}, \(L\) is invertible and so \(L^{-1}:L^2(\Omega) \to H^2(\Omega) \cap H^1_0(\Omega)\) is defined. Define \(C_H>0\) to be the norm of \(L^{-1}\):
\begin{equation*}
    C_H := \inf\{ C>0 : \|L^{-1} u\|_{H^2(\Omega)} \leq C \|u\|_{L^2(\Omega)},\ \forall u \in L^2(\Omega)\}.
\end{equation*}
\end{definition}
\subsection{Main Results}

Recall the well-posedness of the linear PBE.
\begin{lemma}\label{laxmilgram}
	Let $L$ be as in \Cref{set-up} and let $f\in L^2(\Omega)$. Fix $w \in H^2(\Omega)$ whose trace is $g \in H^{\frac{3}{2}}(\partial\Omega)$. Then, there exists a unique $v \in H^1(\Omega)$ such that $v = g$ in the trace sense and
	\[
		\int_\Omega (\epsilon \nabla v)\cdot \overline{\nabla\phi} + \kappa^2 v \overline{\phi} = \int_\Omega f\overline{\phi},
	\]
	for all $\phi \in H^1_0(\Omega)$. 
\end{lemma}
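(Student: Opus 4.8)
The plan is to reduce the inhomogeneous Dirichlet problem to a homogeneous one and then apply the Lax--Milgram theorem on $H^1_0(\Omega)$. First I would substitute $v = \tilde v + w$, where $w \in H^2(\Omega)$ is the fixed lift of $g$. Then $v$ is a weak solution with trace $g$ if and only if $\tilde v \in H^1_0(\Omega)$ satisfies
\begin{equation*}
  \int_\Omega (\epsilon \nabla \tilde v)\cdot\overline{\nabla\phi} + \kappa^2 \tilde v\,\overline{\phi}
  = \int_\Omega f\,\overline{\phi} - \int_\Omega (\epsilon\nabla w)\cdot\overline{\nabla\phi} - \int_\Omega \kappa^2 w\,\overline{\phi}
  =: \ell(\phi)
\end{equation*}
for all $\phi\in H^1_0(\Omega)$. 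Here $\ell$ is a bounded conjugate-linear functional on $H^1_0(\Omega)$: boundedness of the $f$-term is Cauchy--Schwarz plus Poincar\'e \eqref{poincare2}, and the $w$-terms are bounded using $\epsilon\in W^{1,\infty}$, $\kappa^2\in L^\infty$, and $\|w\|_{H^1}\lesssim \|w\|_{H^2}$.

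Next I would define the sesquilinear form $B[u,\phi] = \int_\Omega (\epsilon\nabla u)\cdot\overline{\nabla\phi} + \int_\Omega \kappa^2 u\,\overline{\phi}$ on $H^1_0(\Omega)\times H^1_0(\Omega)$ and verify the two Lax--Milgram hypotheses. Boundedness is immediate from $\epsilon\in W^{1,\infty}$ and $\kappa^2\in L^\infty$. For coercivity, take $\phi = u$ and take real parts: \Cref{h2} gives $\Real\int_\Omega(\epsilon\nabla u)\cdot\overline{\nabla u}\geq \theta\|\nabla u\|_{L^2}^2$, while \Cref{h3} gives $\Real\int_\Omega \kappa^2 |u|^2 \geq -\mu\|u\|_{L^2}^2$. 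Writing $\|\nabla u\|_{L^2}^2 = (1-t)\|\nabla u\|_{L^2}^2 + t\|\nabla u\|_{L^2}^2$ and applying the Poincar\'e inequality \eqref{poincare2} to the second piece, I get
\begin{equation*}
  \Real B[u,u] \geq \theta(1-t)\|\nabla u\|_{L^2}^2 + (t\theta\lambda_1 - \mu)\|u\|_{L^2}^2 .
\end{equation*}
Since $\mu/\theta < \lambda_1$ by \Cref{h3}, one can choose $t\in(0,1)$ with $\mu/(\theta\lambda_1) < t < 1$, making both coefficients strictly positive; combined with Poincar\'e once more this yields $\Real B[u,u]\geq c\|u\|_{H^1}^2$ for some $c>0$. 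Lax--Milgram then produces a unique $\tilde v\in H^1_0(\Omega)$ with $B[\tilde v,\phi]=\ell(\phi)$ for all $\phi$, and $v := \tilde v + w$ is the desired solution; uniqueness follows because any two solutions differ by an element of $H^1_0(\Omega)$ annihilating $B$, which coercivity forces to be zero.

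The only genuinely delicate point is the coercivity estimate under the borderline condition $\mu/\theta<\lambda_1$: one must spend part of the Dirichlet energy via Poincar\'e to absorb the possibly-negative zeroth-order term, and this is exactly where \Cref{h3} is used in a sharp way. Everything else — the lift, boundedness of $\ell$ and $B$, and the uniqueness argument — is routine. I would also remark that the conjugate-linear (rather than bilinear) setting is handled by the complex Lax--Milgram theorem, which applies verbatim once coercivity is stated in terms of $\Real B[u,u]$.
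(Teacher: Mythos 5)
Your proof is correct and is essentially the argument the paper has in mind: the lemma is stated as a recollection of standard well-posedness (the paper gives no explicit proof), and the lifting-plus-complex-Lax--Milgram argument with coercivity obtained by using the Poincar\'e inequality \eqref{poincare2} to absorb the $-\mu\|u\|_{L^2}^2$ term under the condition $\mu/\theta<\lambda_1$ is exactly the mechanism the paper uses elsewhere, e.g.\ in the estimates of \Cref{C_H0} and \Cref{C_H00} and in the coercivity bound \eqref{B_coercive}. No gaps; the only cosmetic point is that after splitting the Dirichlet energy with the parameter $t$ you already control both the gradient and $L^2$ pieces, so the final ``Poincar\'e once more'' is unnecessary.
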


Let $L$ be given by \Cref{set-up} and consider solving $Lu=f \in L^2(\Omega)$ with $g=0$ on $\partial \Omega$. From the standard elliptic theory (for instance, see \cite[Section 6.3, Theorem 4]{evans2010partial}), there exists $C_H>0$ as defined in \Cref{important_constants} such that $\lVert u \rVert_{H^2} \leq C_H \lVert f \rVert_{L^2}$. A general boundary datum can be absorbed into the homogeneous term by replacing $f$ by $f - Lw$ and considering the zero Dirichlet boundary condition, which yields

\begin{lemma}\label{ellipticregularity}
	The unique weak solution of the linear PBE, $u \in H^1(\Omega)$, satisfies
	\begin{equation}\label{ellipticregularity2}
		\lVert u \rVert_{H^2} \leq C_H \lVert f \rVert_{L^2} + (C_HC_D + 1)\lVert w \rVert_{H^2}.
	\end{equation}
\end{lemma}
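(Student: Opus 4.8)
The plan is to reduce to the zero--boundary case already recorded in the paragraph preceding the statement, and then undo the reduction by translating back by $w$. Let $u \in H^1(\Omega)$ be the unique weak solution of the linear PBE furnished by \Cref{laxmilgram}, and fix $w \in H^2(\Omega)$ whose trace is $g \in H^{\frac{3}{2}}(\partial\Omega)$. Put $\tilde u := u - w$. Since $u$ and $w$ share the trace $g$, the trace of $\tilde u$ vanishes, so $\tilde u \in H^1_0(\Omega)$ (cf.\ \Cref{npbweak,trace}).

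First I would identify the equation solved by $\tilde u$. Because $w \in H^2(\Omega)$ and $\epsilon \in W^{1,\infty}(\Omega,\mathbb{C}^{d^2})$, $\kappa^2 \in L^\infty(\Omega)$, we have $\epsilon\nabla w \in H^1(\Omega)$, hence $Lw = -\nabla\cdot(\epsilon\nabla w) + \kappa^2 w \in L^2(\Omega)$ with $\lVert Lw\rVert_{L^2} \leq C_D\lVert w\rVert_{H^2}$ by \Cref{important_constants}. Testing against $\phi \in H^1_0(\Omega)$ and integrating by parts in $\int_\Omega (Lw)\overline\phi$ (legitimate since $\epsilon\nabla w \in H^1(\Omega)$), the weak identity for $u$ in \Cref{laxmilgram} becomes the statement that $\tilde u \in H^1_0(\Omega)$ is the weak solution of $L\tilde u = f - Lw$ with homogeneous Dirichlet data, and $\lVert f - Lw\rVert_{L^2} \leq \lVert f\rVert_{L^2} + C_D\lVert w\rVert_{H^2}$.

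Next I would invoke the boundary elliptic regularity theorem \cite[Section 6.3, Theorem 4]{evans2010partial}, whose hypotheses hold here by \Cref{h1} (smooth boundary) and \Cref{h2} ($W^{1,\infty}$ leading coefficients, uniform ellipticity), while \Cref{h3} makes $L$ invertible: the weak solution $\tilde u$ of $L\tilde u = f-Lw$, $\tilde u|_{\partial\Omega}=0$, in fact lies in $H^2(\Omega)\cap H^1_0(\Omega)$ and satisfies $\lVert \tilde u\rVert_{H^2} \leq C_H\lVert f - Lw\rVert_{L^2}$, where $C_H$ is the norm of $L^{-1}:L^2(\Omega)\to H^2(\Omega)\cap H^1_0(\Omega)$ from \Cref{important_constants}. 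Chaining the last two bounds,
\[
	\lVert \tilde u\rVert_{H^2} \leq C_H\big(\lVert f\rVert_{L^2} + C_D\lVert w\rVert_{H^2}\big) = C_H\lVert f\rVert_{L^2} + C_HC_D\lVert w\rVert_{H^2}.
\]
Since $u = \tilde u + w \in H^2(\Omega)$, the triangle inequality gives $\lVert u\rVert_{H^2} \leq \lVert \tilde u\rVert_{H^2} + \lVert w\rVert_{H^2} \leq C_H\lVert f\rVert_{L^2} + (C_HC_D+1)\lVert w\rVert_{H^2}$, which is \eqref{ellipticregularity2}.

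There is no genuine obstacle here; the argument is essentially bookkeeping. The only points deserving a line of care are that the coefficient regularity in \Cref{h2} is exactly what is used both to form $Lw \in L^2(\Omega)$ (via $\epsilon\nabla w \in H^1$) and to apply $H^2$ elliptic regularity, and that the constant in the regularity estimate may be taken to equal the operator norm $C_H$ of $L^{-1}$ --- immediate from the mapping property of $L^{-1}$ asserted in \Cref{important_constants}. Uniqueness of the resulting $H^2$ solution is already contained in \Cref{laxmilgram}.
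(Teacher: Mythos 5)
Your proof is correct and follows the same route the paper takes: absorb the boundary datum by setting $\tilde u = u - w$, bound $\lVert f - Lw\rVert_{L^2}$ via $C_D$, apply the $H^2$ elliptic regularity estimate with constant $C_H$ to the homogeneous-boundary problem, and finish with the triangle inequality. Your write-up just spells out the bookkeeping (trace of $\tilde u$, $Lw \in L^2$) that the paper leaves implicit.
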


The nonlinear term is handled iteratively where, at each iteration, the regularity gain coming from the elliptic regularity theory is used to estimate the non-linear term in $L^2(\Omega)$.

\begin{lemma}
	Let $s>\frac{d}{2}$. Then for every $v\in H^s(\Omega)$,
	\begin{equation}\label{gn}
		\lVert N(v) \rVert_{L^2} \leq \| \kappa \|_{L^\infty}^2|\Omega|^\frac{1}{2}\sum_{k=2}^\infty |n_k|(C_S(s) \lVert v \rVert_{H^s})^k.  
	\end{equation}
	For $N(v)$ for \Cref{npb}, we have
	\[
		\lVert N(v)\rVert_{L^2} \leq \| \kappa \|_{L^\infty}^2|\Omega|^{\frac{1}{2}}\Big(\sinh(C_S(s) \lVert v \rVert_{H^s}) - C_S(s) \lVert v \rVert_{H^s}\Big).
	\]
\end{lemma}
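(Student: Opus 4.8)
The plan is to estimate $N(v)$ term-by-term using its Taylor expansion and then resum into a closed form. First I would invoke the Sobolev embedding recorded in \Cref{important_constants}: since $s>\frac d2$, we have $H^s(\Omega)\hookrightarrow L^\infty(\Omega)$ with $\lVert v\rVert_{L^\infty}\le C_S(s)\lVert v\rVert_{H^s}$. In particular $v\in L^\infty(\Omega)$, so every power $v^k$ lies in $L^\infty(\Omega)\subseteq L^2(\Omega)$ (using boundedness of $\Omega$ from \Cref{h1}), and
\[
    \lVert v^k\rVert_{L^2(\Omega)}\le |\Omega|^{\frac12}\lVert v\rVert_{L^\infty}^k\le |\Omega|^{\frac12}\bigl(C_S(s)\lVert v\rVert_{H^s}\bigr)^k .
\]

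Next, recalling from \eqref{set-up} that $N(v)=\kappa^2\sum_{k=2}^\infty n_k v^k$ with $|n_k|\le \frac1{k!}$, I would work with the partial sums $S_M:=\kappa^2\sum_{k=2}^M n_k v^k$. By the triangle inequality and $\kappa^2\in L^\infty(\Omega)$ (\Cref{h3}),
\[
    \lVert S_M\rVert_{L^2}\le \lVert\kappa\rVert_{L^\infty}^2\sum_{k=2}^M |n_k|\,\lVert v^k\rVert_{L^2}
    \le \lVert\kappa\rVert_{L^\infty}^2|\Omega|^{\frac12}\sum_{k=2}^\infty |n_k|\bigl(C_S(s)\lVert v\rVert_{H^s}\bigr)^k .
\]
Since $\sum_{k=2}^\infty |n_k|\,|v(x)|^k\le \sinh(|v(x)|)$, which is in $L^\infty(\Omega)\subseteq L^2(\Omega)$, the dominated convergence theorem gives $S_M\to N(v)$ in $L^2(\Omega)$; letting $M\to\infty$ on the left-hand side yields \eqref{gn}.

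Finally, for the nonlinearity of \eqref{npb} one has $n_k=\frac1{k!}$ for odd $k\ge 3$ and $n_k=0$ otherwise, so $\sum_{k=2}^\infty |n_k|\,t^k=\sum_{k\ge 3,\ k\text{ odd}}\frac{t^k}{k!}=\sinh(t)-t$; substituting $t=C_S(s)\lVert v\rVert_{H^s}$ gives the stated closed-form bound. I do not expect a genuine obstacle here: the argument is just Sobolev embedding plus the triangle inequality plus the power series of $\sinh$. The only point needing a little care is the interchange of the $L^2$ norm with the infinite sum, and this is handled cleanly by the pointwise domination $\sum_k |n_k|\,|v|^k\le \sinh|v|$ on the bounded domain $\Omega$.
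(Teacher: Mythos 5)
Your argument is correct and follows essentially the same route as the paper's proof: triangle inequality over the power series, H\"older's inequality (boundedness of $\Omega$) to pass from $L^2$ to $L^\infty$ norms of $v^k$, and the Sobolev embedding constant $C_S(s)$, with the closed form obtained by resumming the odd-power series of $\sinh$. Your extra step justifying the interchange of the $L^2$ norm with the infinite sum via partial sums and dominated convergence is a small refinement of the paper's one-line estimate, not a different approach.
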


\begin{proof}
	\[
		\lVert N(v)\rVert_{L^2} \leq \| \kappa \|_{L^\infty}^2\sum_{k=2}^\infty |n_k| \lVert v^k \rVert_{L^2} \leq \| \kappa \|_{L^\infty}^2|\Omega|^{\frac{1}{2}}\sum_{k=2}^\infty |n_k| \lVert v\rVert_{L^\infty}^k\leq \| \kappa \|_{L^\infty}^2|\Omega|^\frac{1}{2}\sum_{k=2}^\infty |n_k|(C_S(s) \lVert v \rVert_{H^s})^k, 
	\]
	where the inequalities are by the triangle inequality, the H\"older's inequality, and the Sobolev embedding, respectively.
\end{proof}

\begin{remark}
	By working in Sobolev algebras, we bypass the problem of whether $N(u) \in L^2(\Omega)$ or not, for $u \in L^\infty(\Omega)$. Note that Holst \cite[Chapter 2]{Holst1994} bypasses this issue as well, not by working with more regular functions as we do, but by constructing a conditional action functional on $H^1_0(\Omega)$. For $d=3$, the embedding $H^2(\Omega)\hookrightarrow L^\infty(\Omega)$ holds, but an analogous embedding for $H^1(\Omega)$ does not hold. For $d \geq 3$, it is straightforward to construct an example of $u \in H^1(\Omega)$ such that $N(u) = \kappa^2 (\sinh u -u) \notin L^2(\Omega)$. For simplicity, take $\Omega = \mathbb{R}^d$ and $\kappa=1$. For $R>0$, define $u(x) = |x|^{-\alpha}\zeta(x)$ where $\alpha = \frac{d}{2}-1-\epsilon$ with $\epsilon \ll 1$ and $\zeta \in C^\infty_c(B(0,R))$ is a smooth non-negative function such that $\zeta = 1$ on $\overline{B(0,\frac{R}{2})}$. If $N(u) \in L^2(\Omega)$, then $\sinh (u(\cdot)) \in L^2(\Omega)$. Since $\sinh u \geq \frac{u^N}{N!}$ for every odd $N \geq 1$, we have $u^N \in L^2(\Omega)$. However, this is false due to the blow-up of $u$ at the origin.
\end{remark}

Define a non-linear operator $A: C^\infty_c(\Omega)\rightarrow H^2(\Omega)$ where for every $v \in C^\infty_c(\Omega)$, $A(v)$ satisfies
\begin{equation}
\label{nonlinearoperator}
\begin{split}
	L(A(v)) &= f-N(v),\:x\in \Omega\\
	A(v)&= g,\: x \in \partial \Omega.
	\end{split}
\end{equation}
Denoting $K: L^2(\Omega)\rightarrow H^1_0(\Omega)\cap H^2(\Omega)$ to be the inverse of $L$ stated in \Cref{ellipticregularity}, conclude
\[
	A(v) = K(f-N(v)-Lw)+w. 
\]

Equivalently, the operator $A$ defines an iteration map on some Banach space where each iterate is a unique solution to the linear PBE. More precisely, let $u_0 \in H^2(\Omega)$ be the solution for the linearized nPBE; by \Cref{laxmilgram}, there exists a unique weak solution $u_0 \in H^1(\Omega)$ and by \Cref{ellipticregularity}, $u_0 \in H^2(\Omega)$. By the Sobolev embedding theorem, $H^2(\Omega)\hookrightarrow C^{0,\frac{1}{2}}(\overline{\Omega})$, and therefore $N(u_0)\in L^2(\Omega)$. Then, consider
\begin{equation}\label{approxsol}
\begin{split}
	Lu_k + N(u_{k-1}) &= f,\: k \geq 1,\\
	u_k &= g,
	\end{split}
\end{equation}
or equivalently, $u_k = A(u_{k-1})$. It may be that the sequence is convergent to a function that does not solve \eqref{npb} in any meaningful way; see \cite{brezis2007nonlinear}. By showing that $A$ uniquely extends to a compact operator on fractional Sobolev spaces, it is shown that $\left\{u_k\right\}$ converges to a solution of \eqref{npb}. In the rest of this section, assume $d=3$ unless specified otherwise.
\begin{proposition}\label{nonlinearcompact}
	$A:L^\infty(\Omega)\rightarrow H^2(\Omega)$ is locally Lipschitz continuous. Consequently, $A$ is continuous on $H^s(\Omega)$ for every $s \in (\frac{3}{2},2]$. Furthermore, $A$ is compact on $H^s(\Omega)$ for every $s\in (\frac{3}{2},2)$.
\end{proposition}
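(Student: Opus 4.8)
The plan is to prove the three assertions in order, each building on the previous, using the explicit representation $A(v) = K(f - N(v) - Lw) + w$ where $K = L^{-1}: L^2(\Omega) \to H^2(\Omega) \cap H^1_0(\Omega)$ is bounded with norm $C_H$. First, I would establish local Lipschitz continuity of $A: L^\infty(\Omega) \to H^2(\Omega)$. Since $K$ and $w$ are fixed, it suffices to show $v \mapsto N(v) = \kappa^2(\sinh v - v)$ is locally Lipschitz from $L^\infty(\Omega)$ to $L^2(\Omega)$. Given $v_1, v_2$ in a ball $\{\|v\|_{L^\infty} \leq R\}$, write $\sinh v_1 - \sinh v_2 = \int_0^1 \cosh(v_2 + t(v_1-v_2))\,dt \cdot (v_1 - v_2)$ pointwise, so that $|N(v_1) - N(v_2)| \leq \|\kappa\|_{L^\infty}^2 (\cosh R + 1)|v_1 - v_2|$ a.e.; integrating over $\Omega$ gives $\|N(v_1) - N(v_2)\|_{L^2} \leq \|\kappa\|_{L^\infty}^2 (\cosh R + 1)|\Omega|^{1/2}\|v_1 - v_2\|_{L^\infty}$. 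Composing with $K$ yields the local Lipschitz bound for $A$, with constant $C_H \|\kappa\|_{L^\infty}^2(\cosh R + 1)|\Omega|^{1/2}$ on the $L^\infty$-ball of radius $R$.

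Second, for the continuity of $A$ on $H^s(\Omega)$ with $s \in (\tfrac{3}{2}, 2]$: since $d = 3$ and $s > \tfrac{d}{2}$, the Sobolev embedding $H^s(\Omega) \hookrightarrow L^\infty(\Omega)$ (Definition~\ref{important_constants}) holds with norm $C_S(s)$. Thus a bounded set in $H^s(\Omega)$ maps into a bounded set in $L^\infty(\Omega)$, and convergence in $H^s$ implies convergence in $L^\infty$; composing the embedding $H^s \hookrightarrow L^\infty$ with the locally Lipschitz map $A: L^\infty \to H^2$ gives that $A: H^s(\Omega) \to H^2(\Omega)$ is continuous (indeed locally Lipschitz). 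In particular $A$ maps into $H^2(\Omega)$, which we then regard as landing in $H^s(\Omega)$ via the continuous inclusion $H^2(\Omega) \hookrightarrow H^s(\Omega)$.

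Third, for compactness of $A$ on $H^s(\Omega)$ when $s \in (\tfrac{3}{2}, 2)$ strictly: the key is that $A$ factors as $H^s(\Omega) \xrightarrow{A} H^2(\Omega) \xhookrightarrow{\iota} H^s(\Omega)$, and for $s < 2$ the inclusion $\iota: H^2(\Omega) \hookrightarrow H^s(\Omega)$ is compact by the Rellich--Kondrachov theorem (since $\Omega$ is bounded with smooth boundary by Hypothesis~\ref{h1}). Given any bounded sequence $\{v_n\}$ in $H^s(\Omega)$, continuity of $A: H^s \to H^2$ (from step two) together with boundedness gives that $\{A(v_n)\}$ is bounded in $H^2(\Omega)$ — here I would check that $A$ maps bounded sets to bounded sets, which follows from the growth bound $\|A(v)\|_{H^2} \leq C_H(\|f\|_{L^2} + \|N(v)\|_{L^2} + C_D\|w\|_{H^2}) + \|w\|_{H^2}$ combined with the estimate \eqref{gn} controlling $\|N(v)\|_{L^2}$ by $\sinh(C_S(s)\|v\|_{H^s})$. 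Then the compact embedding extracts a subsequence of $\{A(v_n)\}$ convergent in $H^s(\Omega)$, proving $A$ is a compact operator on $H^s(\Omega)$.

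The main obstacle is not any single hard estimate but rather the bookkeeping of the three function-space regimes: $A$ is naturally Lipschitz only from $L^\infty$ (not from $H^s$ directly, because the nonlinearity $\sinh$ is not Lipschitz on all of $H^s$ but is on $L^\infty$-bounded sets), and the compactness is gained purely from the Rellich embedding $H^2 \hookrightarrow\!\hookrightarrow H^s$, which forces the strict inequality $s < 2$. One should be careful that $A$ is only defined a priori on $C^\infty_c(\Omega)$ in \eqref{nonlinearoperator}, so the first step really proves that $A$ \emph{extends uniquely} to a locally Lipschitz map on $L^\infty(\Omega)$ (using density of $C^\infty_c(\Omega)$ in $H^s_0(\Omega)$ and the fact that $u_0$, the first iterate, lies in $H^2(\Omega) \subset L^\infty(\Omega)$); the continuous and compact extensions then follow on the full space.
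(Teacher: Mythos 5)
Your proposal is correct and follows essentially the same route as the paper: local Lipschitz continuity of $A$ from $L^\infty(\Omega)$ to $H^2(\Omega)$ via $\lVert A(u)-A(v)\rVert_{H^2}\leq C_H\lVert N(u)-N(v)\rVert_{L^2}$ and a mean-value estimate on the nonlinearity, continuity on $H^s(\Omega)$ from the Sobolev embedding $H^s(\Omega)\hookrightarrow L^\infty(\Omega)$, and compactness for $s\in(\tfrac{3}{2},2)$ from the a priori $H^2$ bound (the paper's \eqref{brouwer0}, via \eqref{gn}) combined with Rellich--Kondrachov. The only cosmetic difference is that you bound the local Lipschitz constant of the nonlinearity directly by $\cosh R$ on an $L^\infty$-ball, while the paper bounds the power-series coefficients $|n_k|$ through the Cauchy integral formula and sums a geometric series; both give the same conclusion.
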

\begin{proof}
Let $u,v \in L^\infty(\Omega)$ with $\| u \|_{L^\infty}, \| v \|_{L^\infty(\Omega)} \leq M$. Then
from the Mean Value Theorem,
\begin{equation}\label{continuityest}
\begin{split}
\lVert A(u)-A(v)\rVert_{H^2} &= \lVert K(N(u)-N(v))\rVert_{H^2} \leq C_H \lVert N(u)-N(v)\rVert_{L^2}\\
&\leq C_H 
\| u-v \|_{L^\infty} \int_0^1 \| N^\prime((1-t)u+tv) \|_{L^2} dt\\
&\leq C_H \| \kappa \|_{L^\infty}^2\| u-v \|_{L^\infty} \int_0^1 \sum_{k \geq 2} k |n_k| \| (1-t)u +tv \|_{L^{2(k-1)}}^{k-1}dt\\
&\leq \Big\{C_H \| \kappa \|_{L^\infty}^2|\Omega|^{\frac{1}{2}} \sum_{k \geq 2} k |n_k| 2^{k-2} (\| u \|_{L^\infty}^{k-1}+\| u \|_{L^\infty}^{k-1})\Big\}\| u-v \|_{L^\infty}.
\end{split}
\end{equation}
If we show that the series in \Cref{continuityest} converges, then the proof is complete. By the Cauchy integral formula, we obtain an upper bound on $|n_k|$
	\[
		|n_k| \leq \frac{\max\limits_{|z|=R}|N(z)|}{R^{k}},
	\]
	and combining this bound for $R = 4M$ with \Cref{continuityest}, the infinite sum is a convergent geometric series, and therefore the desired continuity has been shown. 
	
	The rest follows from the Sobolev embedding and the Rellich-Kondrachov compactness theorem. Indeed, the continuity of $A$ on $H^s(\Omega)$ follows by considering the embedding $H^s(\Omega)\hookrightarrow L^\infty(\Omega)$. Compactness of $A$ on $H^s(\Omega)$ is immediate once we show $A$ sends a bounded subset of $H^s(\Omega)$ into a bounded subset of $H^2(\Omega)$. For $\lVert u \rVert_{H^s}\leq M$,
	\begin{equation}\label{brouwer0}
	\begin{split}
		\lVert A(u)\rVert_{H^2} &= \lVert K(f-N(u)-Lw)+w\rVert_{H^2} \leq C_H \lVert f-N(u)-Lw \rVert_{L^2}+\lVert w \rVert_{H^2} \\
		&\leq C_H \lVert f \rVert_{L^2} + (C_HC_D+1)\lVert w \rVert_{H^2} + C_H \lVert N(u)\rVert_{L^2} \\
		&\leq C_H \lVert f \rVert_{L^2} + (C_HC_D+1)\lVert w \rVert_{H^2} + C_H \| \kappa \|_{L^\infty}^2  |\Omega|^\frac{1}{2}\sum_{k=2}^\infty |n_k|(C_S(s) M)^k, 
	\end{split}
	\end{equation}
	where \Cref{brouwer0} follows from \Cref{gn}.
\end{proof}

\begin{figure}[ht]
		\centering
		\newcounter{j} %
		\begin{tikzpicture}  
			[>=latex',scale=7,
			declare function={%
				p(\t)= greater(\t,0.5)  ? 1 : 
				0.3256 + sinh( 2 * \t ) - 2 * \t;} ]
			\draw[color=RoyalBlue,samples at={0,0.01,...,0.62}] plot (\x,{0.3256 + sinh( 2 * \x ) - 2 * \x});  
			\draw[color=olive](0,0)--(0.65,0.65);
			\draw[->](0,0)--(0,0.65) node[above]{$y$};
			\draw[->](0,0)--(0.65,0) node[right]{$M$};
			

			\draw[color=RoyalBlue,dotted,line width=0.8pt]%
			(0.5,0.5)--(0.5,0) node[below=8pt]{$M_0$};
			
			\node at (0.43,0.65) {$y = F(M,y_0^{\ast})$};
			\node at (0.65,0.52) {$y = M$};
			\node at (-0.025,0.3256) {$y_0^\ast$};
		\end{tikzpicture}
		\caption{The tangency condition of \Cref{small} where $y_0 = C_H \lVert f \rVert_{L^2} + (C_HC_D+1)\lVert w \rVert_{H^2}$ and $F$ is the LHS of \Cref{small}}\label{figure}
\end{figure} 

Now we apply the a priori estimate above to obtain a strong solution to \Cref{npb}.

\begin{proposition}\label{Schauder}
Let $s\in (\frac{3}{2},2)$ and $M>0$ satisfy	\begin{equation}\label{Schauder2}
C_H \lVert f \rVert_{L^2} + (C_HC_D+1)\lVert w \rVert_{H^2} + C_H \| \kappa \|_{L^\infty}^2  |\Omega|^\frac{1}{2}\sum_{k=2}^\infty |n_k|(C_S(s) M)^k\leq M.   
\end{equation}
Then there exists a strong solution $u \in H^2(\Omega)$ to \eqref{npb} with $\lVert u \rVert_{H^s} \leq M$. If the condition
\begin{equation}\label{Banach2}
C_H C_S(s) \| \kappa \|_{L^\infty}^2|\Omega|^{\frac{1}{2}} \sum_{k=2}^\infty k |n_k| (C_S(s) M)^{k-1}<1, 
\end{equation}
 holds in addition to \eqref{Schauder2}, then $u \in H^2(\Omega)$ is unique in $\overline{B_{H^s}(0,M)}$.
\end{proposition}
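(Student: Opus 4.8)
The plan is to realize the strong solution as a fixed point of the operator $A$ of \Cref{nonlinearoperator} by a topological (Schauder) argument under \eqref{Schauder2} alone, and then to upgrade this to local uniqueness by observing that the extra hypothesis \eqref{Banach2} turns $A$ into a strict contraction on the same ball. For the existence half I would first verify the self-map property: if $\lVert u \rVert_{H^s}\le M$, then the a priori bound \eqref{brouwer0} together with \eqref{Schauder2} gives $\lVert A(u)\rVert_{H^2}\le M$, hence $\lVert A(u)\rVert_{H^s}\le M$ via the continuous inclusion $H^2(\Omega)\hookrightarrow H^s(\Omega)$; thus $A$ maps the closed, bounded, convex set $\overline{B_{H^s}(0,M)}$ into itself. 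Since $s\in(\tfrac{3}{2},2)$, \Cref{nonlinearcompact} states that $A$ is continuous and compact on $H^s(\Omega)$, so Schauder's fixed point theorem yields $u\in\overline{B_{H^s}(0,M)}$ with $u=A(u)$. By the definition \eqref{nonlinearoperator} of $A$ this means $Lu=f-N(u)$ in $\Omega$ and $u=g$ on $\partial\Omega$, i.e.\ $F(u)=f$; moreover $u\in H^2(\Omega)$ because $A$ is $H^2(\Omega)$-valued, and since $\epsilon\in W^{1,\infty}(\Omega,\mathbb{C}^{d^2})$ one may undo the integration by parts as in the discussion following \Cref{npbweak}, so $u$ solves \eqref{npb} pointwise a.e.\ and is a strong solution with $\lVert u \rVert_{H^s}\le M$.

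For the uniqueness half, assume in addition that \eqref{Banach2} holds, and let $u,v\in\overline{B_{H^s}(0,M)}$ be two strong solutions of \eqref{npb}. By uniqueness for the linear problem (\Cref{laxmilgram} and \Cref{ellipticregularity}) both are fixed points of $A$, and $A(u)-A(v)=K\bigl(N(v)-N(u)\bigr)$ since the $f$- and $w$-contributions cancel. Rerunning the Lipschitz estimate from the proof of \Cref{nonlinearcompact} while keeping the constant explicit — writing $N(u)-N(v)=\int_0^1 N'\bigl((1-t)v+tu\bigr)(u-v)\,dt$ with $N'(z)$ the multiplication operator $z\mapsto\kappa^2(\cosh z-1)$, estimating $\lVert(1-t)v+tu\rVert_{L^\infty}\le C_S(s)M$, and using the identity $\sum_{k\ge 2}k|n_k| x^{k-1}=\cosh x-1$ (convergent for every $x$) — one obtains
\[
\lVert A(u)-A(v)\rVert_{H^2}\ \le\ C_H\,C_S(s)\,\lVert\kappa\rVert_{L^\infty}^2\,|\Omega|^{\frac{1}{2}}\Bigl(\sum_{k\ge 2}k|n_k| (C_S(s)M)^{k-1}\Bigr)\lVert u-v\rVert_{H^s},
\]
after also using $\lVert A(u)-A(v)\rVert_{H^2}\le C_H\lVert N(u)-N(v)\rVert_{L^2}$ and $\lVert u-v\rVert_{L^\infty}\le C_S(s)\lVert u-v\rVert_{H^s}$. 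Since $u-v=A(u)-A(v)$ and $\lVert\,\cdot\,\rVert_{H^s}\le\lVert\,\cdot\,\rVert_{H^2}$ via the above inclusion, the left-hand side dominates $\lVert u-v\rVert_{H^s}$, while \eqref{Banach2} asserts that the bracketed constant is strictly less than $1$; therefore $\lVert u-v\rVert_{H^s}=0$, i.e.\ $u=v$. Equivalently, \eqref{Banach2} is exactly the statement that $A$ is a contraction on $\overline{B_{H^s}(0,M)}$, so in its presence the Banach fixed point theorem re-proves existence as well and pins down uniqueness on the whole ball.

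The genuinely substantive inputs — compactness and local Lipschitz continuity of $A$, and the a priori bound — are already in hand from \Cref{nonlinearcompact} and \eqref{brouwer0}, so I expect no serious obstacle; the only parts requiring care are getting the geometry of the fixed-point argument right (choosing the convex set so that $A$ simultaneously self-maps it and is compact on it, which is what \eqref{Schauder2} and the $H^2\hookrightarrow H^s$ inclusion deliver) and, for uniqueness, reading off the contraction factor in \eqref{Banach2} correctly, i.e.\ propagating the Cauchy-type bound on $|n_k|$ from \Cref{nonlinearcompact} through the mean value estimate so that the resulting series telescopes to $\cosh(C_S(s)M)-1$. In this sense the proposition is essentially a corollary of \Cref{nonlinearcompact} and the a priori bound rather than a free-standing argument.
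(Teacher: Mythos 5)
Your proposal is correct and follows essentially the same route as the paper: the self-map bound from \eqref{brouwer0} with \eqref{Schauder2}, Schauder's fixed point theorem on $\overline{B_{H^s}(0,M)}$ using the continuity and compactness of $A$ from \Cref{nonlinearcompact}, and then the explicit Lipschitz estimate (the paper's \eqref{Banach3}) so that \eqref{Banach2} makes $A$ a strict contraction, giving uniqueness via the Banach fixed point argument. The extra details you supply (identifying strong solutions in the ball with fixed points of $A$, and summing $\sum_{k\ge2}k|n_k|x^{k-1}=\cosh x-1$) are exactly the steps the paper leaves implicit.
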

\begin{proof}
Let $\lVert u \rVert_{H^s} \leq M$. Then, a similar argument to \eqref{brouwer0} yields $\lVert A(u)\rVert_{H^s} \leq M$. By Schauder's fixed point theorem (\cite[Corollary 11.2]{gilbarg2015elliptic}) on $\overline{B_{H^s}(0,M)}$, a closed convex subset of $H^s(\Omega)$ on which $A$ is continuous and compact by \Cref{nonlinearcompact}, we obtain $u \in \overline{B_{H^s}(0,M)}$ such that $A(u)=u$. Since $A$ is smoothing, $u \in H^2(\Omega)$.

Following the steps leading to \Cref{continuityest}, we obtain
\begin{equation}\label{Banach3}
\lVert A(u)-A(v)\rVert_{H^s} \leq \left(C_H C_S(s) \| \kappa \|_{L^\infty}^2|\Omega|^{\frac{1}{2}} \sum_{k=2}^\infty k |n_k| (C_S M)^{k-1}\right)\lVert u-v \rVert_{H^s},
\end{equation}
where the strict contraction of $A$ on $\overline{B_{H^s}(0,M)}$ follows from \eqref{Banach2} and the Banach fixed point theorem. 
\end{proof}


\begin{remark}\label{small_data}
    When $N(u) = \sinh u$, there exists $M>0$ that satisfies \eqref{Schauder2} if and only if
    \begin{equation*}
    y_0 := C_H\| f \|_{L^2} + (C_H C_D + 1)\| w \|_{H^2}    
    \end{equation*}
    is sufficiently small such that
\begin{equation}\label{small}
    y_0 + C_H \| \kappa \|_{L^\infty}^2  |\Omega|^{\frac{1}{2}}(\sinh C_S M_0 - C_S M_0)\leq M_0
\end{equation}
where
\begin{equation}\label{solution_bound}
M_0 = C_S^{-1} \cosh^{-1}(1+\frac{1}{C_H C_S \| \kappa \|_{L^\infty}^2  |\Omega|^{1/2}}),    
\end{equation}
which is independent of $\| f\|_{L^2}, \|w \|_{H^2}$. Hence the maximum value of $y_0$ that satisfies \eqref{small} in the equality is
\begin{equation}\label{small_data2}
\begin{split}
y_0^{\ast} &= M_0 - C_H \| \kappa \|_{L^\infty}^2  |\Omega|^{\frac{1}{2}}(\sinh C_S M_0 - C_S M_0)\\
&= C_S^{-1} \left( (1+C_H C_S \| \kappa \|_{L^\infty}^2  |\Omega|^{1/2})\cosh^{-1}\left( 1+\frac{1}{C_H C_S \| \kappa \|_{L^\infty}^2  |\Omega|^{1/2}}\right) - \sqrt{1+2C_H C_S \| \kappa \|_{L^\infty}^2  |\Omega|^{1/2}}\right),
\end{split}    
\end{equation}
which can be shown to be strictly positive. Note that there exists $C>0$ such that
\begin{equation*}
    y_0^\ast (\| \kappa \|_\infty) \sim C_S^{-1} \ln \left( \frac{C}{\lVert \kappa \rVert_{L^\infty}^2  }\right)\ \text{as}\ \| \kappa \|_\infty \rightarrow 0,
\end{equation*}
with other parameters fixed, which is consistent with the global existence and uniqueness of \eqref{npb} with $\kappa = 0$ where \eqref{npb} reduces to the linear PBE.

Given that \eqref{small} holds and $y_0 \leq y_0^{\ast}$, the uniqueness condition \eqref{Banach2} is equivalent to $M<M_0$. Therefore if $y_0 < y_0^{\ast}$, there exists a strong solution $u \in H^2(\Omega)$ unique in $\{ \| u \|_{H^s} < M_0 \}$ by choosing $M = M_0 - \epsilon$ for $\epsilon > 0$ sufficiently small. On the other hand, uniqueness is not guaranteed if $y_0 = y_0^{\ast}$.

The equality case of \eqref{small} when $y_0 = y_0^\ast$ is illustrated in \Cref{figure}. The curve $M \mapsto F(\cdot,y_0^\ast)$ tangentially intersects the identity.
\end{remark}
\begin{remark}
    As can be shown in \eqref{Schauder2}, \eqref{Banach2} our fixed point approach works for small data where the given parameters must be small measured in various norms. For complexified nPBE, however, this restriction is necessary if we wish to preserve uniqueness of solution; see \Cref{nonunique}. On the other hand, our approach establishes existence and uniqueness for a wide class of nonlinearities that may grow super-linearly.
\end{remark}

In the spirit of \Cref{nonlinearcompact,Schauder}, the (local) Lipschitz continuity of $A$ on Sobolev spaces can be shown with more restrictive hypotheses. Denote $A \lesssim B$ whenever there exists an implicit constant $C>0$ dependent only on the given parameters such that $A \leq CB$.

\begin{corollary}
Let $s>\frac{d}{2}$ and $\sigma \leq s \leq \sigma+2$ for $\sigma \in \mathbb{N}\cup \{0\}$, and assume $\epsilon \in C^{\sigma+1}(\overline{\Omega}),\kappa \in H^s(\Omega)$ and $\partial \Omega$ is $C^{\sigma+2}$. For any given $f \in H^\sigma(\Omega), g \in H^{\sigma+\frac{3}{2}}(\partial \Omega)$, it follows that $A:H^s(\Omega)\rightarrow H^{\sigma+2}(\Omega)$ is locally Lipschitz continuous. Hence for $s> \frac{d}{2}$ and $\sigma \leq s < \sigma+2$, there exists a unique fixed point for $A$ in $B_{H^s(\Omega)}(0,R)\subseteq H^s(\Omega)$ for some $R>0$, if the smallness hypotheses analogous to \Cref{Banach2,Banach3} are assumed. 
\end{corollary}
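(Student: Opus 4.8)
The plan is to re-run the contraction argument of \Cref{nonlinearcompact,Schauder} with every occurrence of ``$L^2/H^2$'' replaced by ``$H^\sigma/H^{\sigma+2}$'', using the algebra index $s>\frac d2$ only to control the nonlinearity. Two ingredients replace \Cref{ellipticregularity,gn}. First, higher-order elliptic regularity: since $\epsilon\in C^{\sigma+1}(\overline\Omega)$, $\partial\Omega$ is $C^{\sigma+2}$, and (by $s>\frac d2$ and $s\le\sigma+2$) the zeroth-order coefficient $\kappa^2\in H^s(\Omega)$ is a multiplier on $H^t(\Omega)$ for $0\le t\le\sigma+2$, the solution operator $K=L^{-1}$ of \Cref{nonlinearoperator} extends to a bounded map $K:H^\sigma(\Omega)\to H^{\sigma+2}(\Omega)\cap H^1_0(\Omega)$ with norm $C_H^{(\sigma)}$ depending only on the stated data. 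Second, by \Cref{trace} (with $k=\sigma+2$, using $\partial\Omega\in C^{\sigma+2}\subset C^{\sigma+1,1}$) the inverse trace of $g\in H^{\sigma+\frac32}(\partial\Omega)$ is some $w\in H^{\sigma+2}(\Omega)$, and then $Lw=-\nabla\cdot(\epsilon\nabla w)+\kappa^2w\in H^\sigma(\Omega)$: the divergence term loses two derivatives off $H^{\sigma+2}$ after multiplication by the $C^{\sigma+1}$ matrix $\epsilon$, while $\kappa^2w\in H^{\min(s,\sigma+2)}=H^s\hookrightarrow H^\sigma$ by the multiplier property of $\kappa^2$. Hence $A(v)=K(f-N(v)-Lw)+w$ is well defined with values in $H^{\sigma+2}(\Omega)$ once $N(v)\in H^\sigma(\Omega)$, which is handled next.

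For the nonlinear bookkeeping I would use that $H^s(\Omega)$, $s>\frac d2$, is a Banach algebra, together with (i) a composition estimate: for an entire $F(z)=\sum_k c_kz^k$ and $v\in H^s(\Omega)$, $\lVert F(v)\rVert_{H^s}\le C_{\mathrm{alg}}^{-1}\widetilde F(C_{\mathrm{alg}}\lVert v\rVert_{H^s})$ where $\widetilde F(x)=\sum_k|c_k|x^k$ and $C_{\mathrm{alg}}$ is the algebra constant, and (ii) a Moser product estimate $\lVert ab\rVert_{H^\sigma}\lesssim\lVert a\rVert_{H^s}\lVert b\rVert_{H^\sigma}$ for $0\le\sigma\le s$. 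Applied to $N(v)=\kappa^2\sum_{k\ge2}n_kv^k$ as in \eqref{set-up} ($F=\sinh$ here, with $\widetilde F=\sinh$ entire), these give $\kappa^2\in H^s$, $\sinh v\in H^s$, hence $N(v)\in H^s\hookrightarrow H^\sigma$, with $\lVert N(v)\rVert_{H^\sigma}$ dominated by a convergent series in $C_S(s)$ and $\lVert v\rVert_{H^s}$ exactly parallel to \eqref{gn}. For local Lipschitz continuity, note $A(u)-A(v)=K\big(N(v)-N(u)\big)$ (the $f$ and $w$ contributions cancel), and write $N(u)-N(v)=\int_0^1 N'\big((1-t)v+tu\big)(u-v)\,dt$ with $N'(z)=\kappa^2(\cosh z-1)$ again entire in $z$; on the ball $\lVert u\rVert_{H^s},\lVert v\rVert_{H^s}\le M$ the integrand is bounded in $H^\sigma$ by $C_{\mathrm{alg}}\lVert\kappa^2\rVert_{H^s}\big(\sup_{\lVert w\rVert_{H^s}\le M}\lVert\cosh w-1\rVert_{H^s}\big)\lVert u-v\rVert_{H^s}$, so that $\lVert A(u)-A(v)\rVert_{H^{\sigma+2}}\le C_H^{(\sigma)}\cdot(\text{loc.\ const.})\cdot\lVert u-v\rVert_{H^s}$. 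This is the asserted local Lipschitz continuity of $A:H^s(\Omega)\to H^{\sigma+2}(\Omega)$.

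For the fixed point, restrict to $\overline{B_{H^s}(0,R)}$. Running the analogue of \eqref{brouwer0} in the $H^{\sigma+2}$ norm and composing with the continuous embedding $H^{\sigma+2}(\Omega)\hookrightarrow H^s(\Omega)$, the smallness condition analogous to \eqref{Schauder2} makes $A$ map $\overline{B_{H^s}(0,R)}$ into itself, while the smallness condition analogous to \eqref{Banach2}, \eqref{Banach3} makes $A$ a strict contraction there; the Banach fixed point theorem then yields a unique $u\in\overline{B_{H^s}(0,R)}$ with $A(u)=u$, and $u\in H^{\sigma+2}(\Omega)$ since $A$ is smoothing (when $\sigma+2>s$ one may instead invoke compactness of $H^{\sigma+2}\hookrightarrow H^s$ and Schauder, as in \Cref{Schauder}). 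The main obstacle is precisely the fractional-order nonlinear estimates used above: for integer $s$ the composition and product bounds are classical Moser estimates, but for non-integer $s>\frac d2$ they require the Banach-algebra structure of $H^s(\Omega)$ together with paraproduct/multiplier arguments (or a reduction to the integer index $\lceil s\rceil$ by interpolation), and one must verify the constraint $\sigma\le s\le\sigma+2$ term by term so that $f-N(v)-Lw$ genuinely lands in $H^\sigma(\Omega)$ — in particular for the product $\kappa^2w$ inside $Lw$ and for the product $\kappa^2\sinh v$ inside $N(v)$.
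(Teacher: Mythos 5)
Your proposal follows essentially the same route as the paper's own (much terser) proof: higher-order elliptic regularity so that $K$ maps $H^\sigma(\Omega)$ boundedly into $H^{\sigma+2}(\Omega)$, the Mean Value Theorem together with the $H^s$ Sobolev-algebra/Moser product estimates to bound $\lVert N(u)-N(v)\rVert_{H^\sigma}$ by $\lVert\kappa\rVert_{H^s}^2\lVert u-v\rVert_{H^s}$ times a locally bounded factor, and then the fixed point arguments of \Cref{nonlinearcompact,Schauder} run in the pair $H^s/H^{\sigma+2}$, so the argument is correct and matches the intended one. One cosmetic caveat: your statement that $\kappa^2\in H^s(\Omega)$ is a multiplier on $H^t(\Omega)$ for all $0\le t\le\sigma+2$ overstates what is true (it is a multiplier only for $t\le s$), but your argument never uses the stronger claim, since all that is needed is $\kappa^2 v\in H^\sigma(\Omega)$ for $v\in H^{\sigma+2}(\Omega)\hookrightarrow H^s(\Omega)$, which holds because $\sigma\le s$.
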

\begin{proof}
By the Elliptic Regularity Theorem and the Mean Value Theorem,
    \begin{equation}
\begin{split}
\lVert A(u)-A(v)\rVert_{H^{\sigma+2}} &= \lVert K(N(u)-N(v))\rVert_{H^{\sigma+2}} \lesssim \lVert N(u)-N(v)\rVert_{H^\sigma}\\
&\lesssim \| \kappa \|_{H^s}^2\| u-v \|_{H^s} \int_0^1 \| N^\prime((1-t)u+tv) \|_{H^s} dt.
\end{split}
\end{equation}
The rest is analogous to the proofs of \Cref{nonlinearcompact,Schauder} using the Sobolev algebra property and the fixed point arguments.
\end{proof}

\section{Analytic Regularity.}
\label{analyticreg}

In this section, the complex analytic regularity of the unique solution obtained in \Cref{Schauder} is shown. In the spirit of uncertainty quantification, the coefficient functions $\bv := (\epsilon, \kappa, f, g)$ are assumed to depend on $\by := (y_1,\dots,y_N) \in \Gamma = [-1,1]^N$ where $\Gamma$ denotes the stochastic domain. Suppose the coefficient functions admit analytic extensions to $\Theta \subseteq \mathbb{C}^N$ containing $\Gamma$; the notation $\bz := (z_1,\dots,z_N) \in \Theta$ is used to denote the complexification of $\by$. For convenience, absorb the boundary data and consider
\begin{equation}\label{npb2}
\begin{aligned}
	-\nabla_x \cdot \left(\epsilon(x,\by) \nabla_x v(x,\by)\right)  + \kappa(x,\by)^2 \sinh \left(v(x,\by)+w(x,\by)\right) &= f(x,\by) + \nabla_x \cdot (\epsilon(x,\by)\nabla_x w(x,\by)),  &&x\in \Omega,\\
	v(x,\by)&= 0,  &&x \in \partial \Omega,
\end{aligned}
\end{equation}
where $\by \in \Gamma$ and $u= v + w$ is the solution to \eqref{npb}.

\begin{hypothesis} (\textbf{Finite Dimensional Noise Model}) 
We assume that that fluctuations of the components of $\bv$ are modeled as
\[
\bv(x,\by):= \xi_{\Omega}(x) + \sum_{i=0}^{N} \alpha_i \bphi(x) y_i
\]
where $\alpha_i \in R$, $\xi_{\Omega}$ is the indicator function defined on $\Omega$  and $\bphi_i \in [C^{\infty}(\Omega)]^4 \cap
[L^{2}(\Omega)]^4$ for $i = 1,\dots,N$.
\end{hypothesis}

\begin{remark}
Suppose we have a complete probability space $(\Omega_{\bbP},\mcF,\bbP)$, where $\Omega_{\bbP}$ is the set of outcomes, 
$\mcF$ is the sigma algebra and $\bbP$ is the associated probability measure. The stochastic fluctuations are modeled by $\Gamma = [-1,1]^N$ reflects the finite-dimensional noise assumption that the coefficient functions are random inputs described by $N$ real-valued random variables $\{ Y_k(w_p)\}_{k=1}^N$ for $w_p \in \Omega_{\bbP}$ with mean zero and unit variance where the domain $\Gamma$ may also be unbounded as described in \cite{babusk_nobile_temp_10}. Hence $\mathbf{v}(x,w_p) := \begin{bmatrix} \epsilon(x,w_p) & \kappa(x,w_p) & f(x,w_p) & g(x,w_p) \end{bmatrix}^T \in L^2_{\bbP}(\Omega;[L^{2}(\Omega)]^4)$ can be represented by the expansion
\begin{equation*}
\mathbf{v}(x,w_p) = \mathbb{E}[\mathbf{v}(x,w_p)] + \sum_{k=1}^N \sqrt{\lambda_k} \pmb{\phi}_k(x) Y_k(w_p),\ \lambda_k \geq 0.    
\end{equation*}

\noindent If the covariance structure of $\mathbf{v}$ were known, then $(\lambda_k,\pmb{\phi}_k)_{k=1}^N$ forms the eigenbasis of the covariance matrix and therefore by orthogonality (see \cite[Section 2]{castrillon2022stochastic} for an introduction to the vector-field Karhunen-Lo\`eve transform),
\begin{equation*}
    Y_k(w_p) = \frac{1}{\sqrt{\lambda_k}} \int_{\Omega} (\mathbf{v}(x,w_p) - \mathbb{E}[\mathbf{v}(x,w_p)])^T \pmb{\phi}_k(x) dx.
\end{equation*}    
\end{remark}

\begin{remark}
    Due to the KL truncation the coefficients $\epsilon$
    and $\kappa$ can become negative i.e. the truncation does not guarantee that the 
    expansion of $\epsilon$ and $\kappa$ are always positive and thus a solution might 
    not exist. This problem is circumvented by taking to the KL expansion of $\bv$ and
    replacing $\epsilon$ and $\kappa$ with the
    terms $\log (\epsilon - a_{min})$, $\min\{\epsilon\} > a_{min} > 0$ and $\log (\kappa)$
    respectively. An
    exponential is then applied to the truncated KL expansion of $\bv$. This will guarantee that $\epsilon$ and $\kappa$ will always
    be positive.
\end{remark}

Furthermore the analytic extension of $u(\by)$ from $y \in \Gamma$ to an open subset $\Theta \subseteq \mathbb{C}^N$ containing a Bernstein polyellipse $\mathcal{E}_{\rho_1}\times \cdots \times \mathcal{E}_{\rho_N} \subseteq \Theta$ containing $\Gamma$ with $\rho_i >1,\ 1 \leq i \leq N$ implies a algebraic or sub-exponential convergence of the surrogate model to the exact solution with respect to the number of collocation points used in the sparse grid approximation as shown in \cite[Lemma 4.4]{babusk_nobile_temp_10}.

For the next proposition, let $\epsilon(z) := \epsilon(\cdot,z) \in W^{1,\infty}(\Omega;\mathbb{C})$ be a scalar field for $z \in \mathbb{C}$ satisfying $\Real(\epsilon(x,\bz)) \geq \theta>0$ for all $x \in \Omega$. 

\begin{theorem}\label{mainresult2}
Suppose $\epsilon,\kappa,f,w$ admit analytic extensions from $\Gamma$ to an open subset $\Theta \subseteq \mathbb{C}^N$ such that for all $\bz \in \Theta$, the following quantitative bounds hold
\begin{equation}\label{quantitative}
\begin{split}
0< d_\Omega^2 &< \pi^2 \theta C_H,\\
C_H \| f(\bz) \|_{L^2} + (C_H C_D + 1) \| w(\bz) \|_{H^2} &< y_0^\ast(\bz),\\
\| \kappa(\bz) \|_{L^\infty}^2 &\leq \frac{\pi^2 \theta}{d_\Omega^2} - \frac{1}{C_H},
\end{split}    
\end{equation}
with $y_0^\ast$ defined as \eqref{small_data2}. Then the solution map $\Theta \rightarrow H^2(\Omega)$ given by $\bz \mapsto u(\bz)$ is complex-analytic.
\end{theorem}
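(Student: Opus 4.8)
The plan is to combine the a priori existence-and-uniqueness machinery of \Cref{Schauder} with a complex-parametric version of the implicit function theorem (or, more elementarily, a direct power-series / Cauchy-estimate argument applied to the fixed-point iteration). First I would check that the three quantitative bounds in \eqref{quantitative} guarantee that, for every fixed $\bz \in \Theta$, \Cref{h2,h3} hold for the complexified coefficients: the first inequality together with the Poincar\'e inequality $\lambda_1 \geq \pi^2/d_\Omega^2$ (valid on convex domains) controls the invertibility constant $C_H$, and the third bound forces $\|\kappa(\bz)\|_{L^\infty}^2$ to be small enough that $L(\bz)$ stays invertible uniformly in $\bz$; the second bound is exactly the small-data condition $y_0(\bz) < y_0^\ast(\bz)$ of \Cref{small_data}. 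Hence for each $\bz$ there is a unique strong solution $u(\bz) \in \overline{B_{H^s}(0,M_0)}$ with $M < M_0$, and the relevant constants $C_H, C_D, C_S, y_0^\ast$ vary continuously (indeed, the coefficient maps are analytic) in $\bz$, so $M$ can be chosen locally uniformly. This sets up a well-defined map $\bz \mapsto u(\bz)$.

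Next I would show analyticity by realizing $u(\bz)$ as a fixed point and differentiating. Define $\Phi(\bz, u) := u - A_{\bz}(u) = u - K_{\bz}(f(\bz) - N_{\bz}(u) - L_{\bz} w(\bz)) - w(\bz)$, viewed as a map $\Theta \times H^s(\Omega) \to H^s(\Omega)$ (or, after elliptic smoothing, into $H^2(\Omega)$). The key structural facts are: (i) $\bz \mapsto (\epsilon(\bz), \kappa(\bz), f(\bz), w(\bz))$ is analytic into the appropriate Banach spaces by hypothesis; (ii) $u \mapsto N_{\bz}(u) = \kappa(\bz)^2 \sum_{k} n_k u^k$ is analytic on $H^s(\Omega)$ because $H^s$ is a Banach algebra for $s > 3/2$ and the series converges locally uniformly (the Cauchy-estimate bound on $|n_k|$ used in the proof of \Cref{nonlinearcompact} gives the needed uniform convergence of the Fr\'echet-derivative series); (iii) $\bz \mapsto K_{\bz} = L_{\bz}^{-1}$ is analytic because $L_{\bz}$ depends analytically on the coefficients and inversion is analytic on the open set where $L_{\bz}$ is invertible. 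Therefore $\Phi$ is jointly analytic. At a solution $(\bz_0, u(\bz_0))$ the partial Fr\'echet derivative $D_u \Phi = \mathrm{Id} - D_u A_{\bz_0}$ is invertible precisely because \eqref{Banach2} holds with $M < M_0$, i.e. $D_u A_{\bz_0}$ is a strict contraction, so $\mathrm{Id} - D_u A_{\bz_0}$ has a Neumann-series inverse. The holomorphic implicit function theorem on complex Banach spaces then yields that $\bz \mapsto u(\bz)$ is analytic in a neighbourhood of $\bz_0$, and since $\bz_0 \in \Theta$ was arbitrary, on all of $\Theta$.

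The step I expect to be the main obstacle is item (iii) together with the uniformity bookkeeping: one must verify carefully that the invertibility constant $C_H(\bz)$, and the solution radius $M(\bz)$, can be controlled on a neighbourhood of each $\bz_0$ — in particular that the solution $u(\bz)$ selected by the fixed-point argument genuinely coincides with the locally unique branch produced by the implicit function theorem (uniqueness in $\overline{B_{H^s}(0,M_0)}$ from \Cref{Schauder} is exactly what identifies the two), and that the hypotheses \eqref{quantitative}, being open conditions, persist on that neighbourhood. A secondary technical point is showing the Nemytskii-type map $u \mapsto N_{\bz}(u)$ is Fr\'echet-analytic rather than merely smooth; this follows from expanding $\sinh$ in its power series and using the algebra estimate $\|u^k\|_{H^s} \leq C^{k-1}\|u\|_{H^s}^k$ to get a convergent majorant, so the map is given by a locally convergent power series in $u$ with coefficients analytic in $\bz$. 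Once these are in hand the conclusion is immediate. \qedhere
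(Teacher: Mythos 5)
Your proposal is correct in substance, but it takes a genuinely different route from the paper. The paper does not invoke an implicit function theorem: it reduces joint analyticity to separate holomorphy via Hartogs' theorem and Osgood's lemma, then for each coordinate $z_n$ takes formal real partial derivatives of \eqref{npb2}, obtaining the $2\times 2$ real linearized elliptic system \eqref{partial} for $(\zeta_1,\zeta_2)=(\partial_s v_R,\partial_s v_I)$; it proves coercivity of the associated bilinear form $B=B_1+B_2$ by the pointwise estimate $r(x,\bz)\leq \cosh(C_S M_0)\,|\kappa|^2 \leq \|\kappa(\bz)\|_{L^\infty}^2 + C_H^{-1} \leq \pi^2\theta/d_\Omega^2 \leq \lambda_1\theta$ — this is exactly where the first and third bounds of \eqref{quantitative} enter — then applies Lax--Milgram and elliptic regularity, identifies the formal derivatives with the actual partial derivatives by adapting \cite[Lemma 8]{Castrillon2021}, and finally shows the Cauchy--Riemann combinations $P,Q$ solve the homogeneous linearized system and hence vanish by uniqueness. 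In your argument the invertibility of the linearization $\mathrm{Id}-D_uA_{\bz}$ comes instead from the strict contraction \eqref{Banach2} (Neumann series), so the first and third bounds of \eqref{quantitative} are used only to guarantee that the complexified \Cref{h2,h3} hold uniformly on $\Theta$ (so that \Cref{Schauder} and \Cref{small_data} apply and $y_0^\ast$, $M_0$ are well defined), while the second bound supplies $y_0<y_0^\ast$; the holomorphic implicit function theorem then gives joint analyticity directly, with your uniqueness-in-$B_{H^s}(0,M_0)$ identification correctly gluing the local analytic branch to the global solution map. What your route buys is a cleaner argument that bypasses the real-variable bookkeeping, the explicit potential matrix $\mathbf{V}$, the separate Cauchy--Riemann verification, and Hartogs; what the paper's route buys is a more elementary, self-contained argument (only Lax--Milgram, elliptic regularity, and classical several-complex-variables facts) that produces the linearized PDE explicitly and parallels the existing UQ literature. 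Two small points you should tighten: (i) the bound $\|D_uA_{\bz}(u(\bz))\|<1$ should be obtained from the Lipschitz estimate \eqref{Banach3} on a slightly larger ball of radius $M'\in(M,M_0)$ (the constant in \eqref{Banach2} is still $<1$ there since it equals $1$ only at $M_0$), so that $u(\bz)$ is interior to the ball on which the estimate holds; and (ii) state explicitly the topologies in which $\bz\mapsto(\epsilon,\kappa,f,w)$ is assumed analytic ($W^{1,\infty}$, $L^\infty$, $L^2$, $H^2$), since the analyticity of $\bz\mapsto L_{\bz}^{-1}$ and of $\Phi$ rests on this. Neither point is a gap, only bookkeeping.
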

\begin{remark}
The upper bound on $d_\Omega>0$ and the size restriction on $\Theta$ are sufficient conditions in \Cref{mainresult2}, but need not be necessary. To elaborate on the smallness of $d_\Omega>0$, we have 
\begin{equation*}
    C_H \geq C_D^{-1} \geq \frac{1}{18 \| \epsilon \|_{W^{1,\infty}} + \| \kappa \|_{L^\infty}^2},
\end{equation*}
by \Cref{important_constants} and \eqref{bounded}. Therfore if $d_\Omega^2 < \frac{\pi^2 \theta}{18 \| \epsilon \|_{W^{1,\infty}} + \| \kappa \|_{L^\infty}^2}$, then the assumption $d_\Omega^2 < \pi^2 \theta C_H$ is satisfied.
\end{remark}

\begin{proof}

A strategy of the proof is given. By the Hartog's Theorem \cite[Chap 1]{krantz2001function}, any separately holomorphic functions are continuous on $\Theta$, and therefore analytic on the Bernstein polyellipse by the Osgood's Lemma \cite[Chap 1]{gunning2022analytic}. Let $\bz = (z_n;z_n^\prime) \in \Theta$ where $z_n$ is the $n^{\text{th}}$ coordinate of $\bz$ and $z_n^\prime$ is the rest of the coordinates. Let $s := \Re z_n,\ \omega := \Im z_n$. Based on the proof of \cite[Lemma 8]{Castrillon2021}, the existence of the partial derivatives of the solution ($\partial_s u,\partial_\omega u$) is justified, after which, $u(z)$ is shown to satisfy the Cauchy-Riemann equation in $z_n$ by taking the complex derivatives of \eqref{npb2}. 

By \eqref{quantitative}, the nPBE \eqref{npb2} admits a solution $v:\Theta \rightarrow H^1_0(\Omega) \cap H^2(\Omega)$ unique in $B_{H^s(\Omega)}(0,M_0)$ by \Cref{small_data}. Let $\zeta_1,\zeta_2$ be the formal derivatives $\partial_s v_R,\partial_s v_I$, respectively. Taking the formal partial derivatives of \eqref{npb2} in $s$, the vector equation that consists of the real and imaginary parts is given by 
\begin{equation}\label{partial}
\begin{split}
    &-\nabla \cdot
    \left(\begin{pmatrix}
    \epsilon_R & -\epsilon_I\\
    \epsilon_I & \epsilon_R
    \end{pmatrix}
    \nabla
    \begin{pmatrix}
    \zeta_1\\ \zeta_2
    \end{pmatrix}\right)+
    \mathbf{V}
    \begin{pmatrix}
    \zeta_1\\ \zeta_2
    \end{pmatrix}=
    \begin{pmatrix}
        f_1 \\ f_2
    \end{pmatrix},
\end{split}
\end{equation}
where

\begin{equation*}
    \mathbf{V}(z) = \begin{pmatrix}
    \cosh(u_R)\cos(u_I) & -\sinh(u_R) \sin(u_I)\\
    \sinh(u_R) \sin(u_I) & \cosh(u_R)\cos(u_I) 
    \end{pmatrix}
    \begin{pmatrix}
    \kappa_R^2 - \kappa_I^2 & -2\kappa_R \kappa_I\\
    2\kappa_R \kappa_I & \kappa_R^2 - \kappa_I^2
    \end{pmatrix},
\end{equation*}
and $f_1(\bz),f_2(\bz) \in L^2(\Omega)$. Considering $L^2(\Omega), H^1_0(\Omega)$ over $\mathbb{R}$, the product $\mathscr{H} = H^1_0(\Omega) \times H^1_0(\Omega)$ defines a Hilbert space under

\begin{equation*}
\langle \phi,\psi \rangle = \left\langle
    \begin{pmatrix}
        \phi_1 \\ \phi_2
    \end{pmatrix}
    ,
    \begin{pmatrix}
        \psi_1 \\ \psi_2
    \end{pmatrix}
    \right\rangle = \int_\Omega \nabla \phi_1 \cdot \nabla \psi_1 + \nabla \phi_2 \cdot \nabla \psi_2.
\end{equation*}

Let $B:\mathscr{H} \times \mathscr{H} \rightarrow \mathbb{R}$ be the bilinear form corresponding to \eqref{partial}. Consider $B = B_1 + B_2$ defined by

\begin{equation*}
\begin{split}
B_1(\phi,\psi) &= \bigintsss_\Omega \left( \begin{pmatrix}
    \epsilon_R & -\epsilon_I\\
    \epsilon_I & \epsilon_R
    \end{pmatrix}
    \begin{pmatrix}
        \nabla \phi_1 \\ \nabla \phi_2
    \end{pmatrix}\right)
    \cdot
    \begin{pmatrix}
        \nabla \psi_1 \\ \nabla \psi_2
    \end{pmatrix}dx\\
    B_2(\phi,\psi) &= \bigintsss_\Omega \left(\mathbf{V}
    \begin{pmatrix}
    \phi_1 \\ \phi_2
    \end{pmatrix}\right)
    \cdot
    \begin{pmatrix}
     \psi_1 \\ \psi_2
    \end{pmatrix}dx.    
\end{split}
\end{equation*}

By the uniform ellipticity condition, we have

\begin{equation*}
    |B_1(\phi,\psi)| \leq \| \epsilon(\bz) \|_{L^\infty} \| \phi \|_{\mathscr{H}}\| \psi \|_{\mathscr{H}},\ B_1(\phi,\phi) \geq \theta \| \phi \|_{\mathscr{H}}^2.
\end{equation*}
Another standard exercise yields

\begin{equation*}
    |B_2(\phi,\psi)| \leq \left(2\lambda_1^{-1} \cosh(2\| u_R(\bz)\|_{L^\infty})\|\kappa(\bz)\|_{L^\infty}^2\right) \| \phi \|_{\mathscr{H}} \| \psi \|_{\mathscr{H}}.
\end{equation*}

Let $r(x,\bz) := \sqrt{\cosh^2(u_R)|\kappa|^4 - (2\kappa_R\kappa_I)^2}$.
Combining

\begin{equation*}
\begin{split}
B_2(\phi,\phi) &= \int_\Omega (\cosh (u_R) \cos (u_I) (\kappa_R^2 - \kappa_I^2) - 2 \sinh(u_R) \sin (u_I)\kappa_R \kappa_I)(\phi_1^2 + \phi_2^2)\\
& \geq -\int_\Omega r(x,\bz)(\phi_1^2 + \phi_2^2) \geq -\| r(\cdot,\bz) \|_{\infty} \lambda_1^{-1} \| \phi \|_{\mathscr{H}}^2,
\end{split}
\end{equation*}
with the coercivity estimate of $B_1$, we have
\begin{equation}\label{B_coercive}
    B(\phi,\phi) \geq \left(\theta - \frac{\| r(\cdot,\bz)\|_{\infty}}{\lambda_1}\right)\| \phi \|_{\mathscr{H}}^2.
\end{equation}

By the Sobolev Embedding Theorem and \eqref{solution_bound}, we obtain

\begin{equation*}
\begin{split}
    r(x,\bz) &\leq \cosh (u) |\kappa(\bz)|^2 \leq \cosh (C_S(s) \| u \|_{H^s}) |\kappa(\bz)|^2 < \cosh (C_S(s)M_0) |\kappa(\bz)|^2\\
    & \leq \| \kappa(\bz) \|_{L^\infty}^2 + \frac{1}{C_H C_S(s) |\Omega|^{\frac{1}{2}}},
\end{split}
\end{equation*}
where $s \in (\frac{3}{2},2)$. Since $C_S(s) \geq C_S(2)$, it follows from \Cref{sobolevestimate} that $C_S(s) |\Omega|^{\frac{1}{2}} \geq 1$. By \eqref{quantitative} and the lower bound, $\lambda_1 \geq \frac{\pi^2}{d_\Omega^2}$, on the principal eigenvalue of the Dirichlet Laplacian (see \cite{payne1960optimal}),
\begin{equation*}
    r(x,\bz) < \| \kappa(\bz) \|_{L^\infty}^2 + \frac{1}{C_H} \leq \frac{\pi^2 \theta}{d_\Omega^2} \leq \lambda_1 \theta,
\end{equation*}
and hence the coercivity of $B$ from \eqref{B_coercive}. By the Lax-Milgram Theorem, \eqref{partial} is well-posed in $\mathscr{H}$ and by the elliptic regularity theory, it follows that $\begin{pmatrix}
\zeta_1(\bz) \\ \zeta_2(\bz)
\end{pmatrix} \in \mathscr{H} \cap (H^2(\Omega)\times H^2(\Omega))$. Adopting the argument in \cite[Lemma 8]{Castrillon2021} where \eqref{npb2} is considered in its weak form, it can be shown that $\partial_s v_R(\bz),\partial_s v_I(\bz)$ exist for every $z \in \Theta$ and

\begin{equation*}
    \zeta_1(\bz) = \partial_s v_R(\bz),\ \zeta_2(\bz) = \partial_s v_I(\bz),    
\end{equation*}
while taking the formal derivative of \eqref{npb2} in the imaginary variable yields an analogous conclusion for $\partial_\omega v$. 

Let $P(x,\bz) = \partial_s v_R(\bz) - \partial_\omega v_I(\bz)$ and $Q(x,\bz) = \partial_\omega v_R(\bz) + \partial_s v_I(\bz)$. By taking the partial derivatives of \eqref{npb2} and using the Cauchy-Riemann equations satisfied by the coefficient functions given by the hypotheses, one can argue as (22) of \cite{Castrillon2016} that $P,Q$ satisfy \eqref{partial} with $\zeta_1 = P,\ \zeta_2 = Q$ and $f_1 = f_2 = 0$ under the zero Dirichlet boundary condition. By the Lax-Milgram Theorem, $P(z_n;z_n^\prime) = Q(z_n;z_n^\prime) = 0$ for all

\begin{equation*}
z_n \in \Theta(z_n^\prime):=\{ \zeta \in \mathbb{C}: (\zeta;z_n^\prime) \in \Theta\},  
\end{equation*}
for each $z_n^\prime$. Since $v:\Theta \rightarrow H^1_0(\Omega) \cap H^2(\Omega)$ defines a complex-analytic map, so does $u = v + w$ define a $H^2(\Omega)$-valued analytic map. Indeed the analytic extension of $g$ implies that of $w = T^{-1}g$.
\end{proof}



\section{Tensor sparse grid polynomial approximation}
\label{polynomial}

Consider the problem of approximating a function $\nu: \Gamma \rightarrow W$
on the domain $\Gamma$ and Banach space $W$. The goal is to efficiently approximate such a function
using tensor product polynomials. The accuracy of the polymonial representation
will depend on the existence of an analytic extension of $\nu$ onto the
complex domain $\Theta \subset \bbC^{N}$. In this section we introduce
a brief discussion of sparse grids, which have become a popular method
for approximating functions with complex analytic extensions. A more detail
exposition can be found in \cite{babusk_nobile_temp_10,nobile2008a,Castrillon2016,Castrillon2021}.

Let $\mcP_{ p}(\Tilde{\Gamma}):=\text{\rm span}(y^k,\,k=0,\dots,p)$ and  
consider the univariate function $\nu: \Tilde{\Gamma} \rightarrow W$, $\Tilde{\Gamma} = [-1,1]$  and
univariate Lagrange interpolant 
$\mcI^{m(i)}_{p}:C^{0}(\Tilde{\Gamma}) \rightarrow {\mcP}_{m(i)-1}(\Tilde{\Gamma})$
such that

\[
\mcI^{m(i)}_{p}
(\nu(y)):= \sum_{k=1}^{m(i)} \nu(\cdot, {y_k}) l^{p}_{k}(y),
\]
where $i \geq 0$ is the level of approximation and
$m(i) \in \bbN_{0}$ is the number of evaluation points at level $i \in
\bbN_{0}$,  $m(0) = 0$, $m(1) = 1$ and $m(i) \leq m(i+1)$ if $i \geq
1$. By convention, for $m(0)=0$, let $\mcP_{-1} =\emptyset$. Furthermore, 
the Lagrange polynomials $l^p_k$ form  a basis for $\mcP_p(\tilde \Gamma)$.

The Lagrange interpolant can be easily extended to $N$ dimensions by taking
tensor products. Let $\mcP_{\pp}(\Gamma) = \bigotimes_{n=1}^{N}\;\mcP_{p_n}(\Gamma_{n})$
be an index of polynomial degree along each dimension of $\Gamma$. The interpolant 
$\mcI_{\pp}:C^{0}(\Gamma) \rightarrow \mcP_{\pp}(\Gamma)$ can be
constructed as $\mcI^{\pp}:= \mcI^{m(i)}_{p_1} \otimes \mcI^{m(i)}_{p_2} \otimes \dots \otimes 
\mcI^{m(i)}_{p_N}$. Thus for any function $\nu: \Gamma \rightarrow W$ we have
that
\[
\mcI^{\pp}(\nu(\by)) = \sum_{k \in \mcK} \nu(\cdot, {\by_k}) l^{\pp}_{k}(\by),
\]
where $\mcK$ is an appropriate index set and $l^{\pp}_{k}(\by)$ are 
tensors of univariate Lagrange polynomials. However, the
dimensionality of $\mcP_p$ and the index set $\mcK$  increases as 
$\prod_{n=1}^N$ $(p_n+1)$  with $N$. This polynomial representation 
suffers from the curse of dimensionality and 
becomes intractable for even a moderate amount of dimensions \cite{Khoromskij2018}.
In contrast, if sufficient regularity of $\nu(\by)$ exists with respect to
$\by \in \Gamma$ the dimensionality of the polynomial approximation can
be significantly reduced by restricting the degree of the tensor product polynomial
along each dimension.

Consider the difference operator along the {$n^{th}$ dimension of
  $\Gamma$}
\[
  {\Delta_{p,n}^{m(i)} :=} \mcI^{m(i)}_{p}-\mcI^{m(i-1)}_{p}.
  \]
Let $\ii=(i_1,\ldots,i_{N}) \in \bbNset^{N}_{+}$ be a multi-index,
$w$ be the level of approximation of the sparse grid
and 
$g:\bbNset^{N}_{+}\rightarrow\bbNset$ be a restriction function
that acts on $\ii$. The sparse grid approximation of $\nu$ is constructed as
\[
  \mcS^{m,g}_w[\nu]
= \sum_{\ii\in\bbNset^{N}_{+}: g(\ii)\leq w} \;\;
 \bigotimes_{n=1}^{N} {\Delta_n^{m(i_n)}}(\nu(\by)). 
\]
By choosing the function $m(i)$ and $g$ appropriately the dimensionality
of the sparse grid can be controlled.  The Smolyak sparse grid \cite{nobile2008a} 
can be built with the following formulas:
\[
m(i) = \begin{cases} 1, & \text{for } i=1 \\ 2^{i-1}+1, & \text{for }
  i>1 \end{cases}\quad \text{ and } \quad g(\ii) = \sum_{n=1}^N
(i_n-1),
\]
where
\[
    f(p) = \begin{cases}
      0, \; p=0 \\
      1, \; p=1 \\
      \lceil \log_2(p) \rceil, \; p\geq 2
    \end{cases}.
\]
The second step is to choose the location of the knots $\by_k$. For the domain
$\Gamma$ a good
choice is the Clenshaw-Curtis (CC) abscissa \cite{nobile_tempone_08}. This
consists of the extrema of Chebyshev polynomials:
\[
y^n_j = -cos \left( \frac{\pi(j-1)}{m(i) - 1} \right),
\]

\begin{remark}
Note that isotropic sparse grids can be extended to anisotropic
setting. By adapting the function $g$ to the contribution of each dimension 
to the function $\nu(\by)$, higher convergence rates can be 
achieved \cite{nobile2008b}.
\end{remark}

We are now interested in obtaining bounds for the error of the
sparse grid i.e. $\|\nu - \mcS^{m,g}_{v}[\nu]
\|_{L^{\infty}(\Gamma)}$. This bound can be shown to be  controlled
by i) the number of dimensions $N$,
ii) the number of knots $\eta$ in the sparse grid, but most
importantly iii) the size of the region of complex region of the
analytic extension of $\nu(\by)$ onto $\bbC^{N}$.

Let ${\mathcal E}_{\hat \sigma_1, \dots, \hat \sigma_{N}} : = \Pi_{n=1}^{N}{\mathcal E}_{n,\hat \sigma_n} \subset \bbC^{N}$, where 
\[
\begin{split}
  \mcE_{n,\hat \sigma_n}
  &= \Big\{ z \in \bbC \mathrm{\ with}\,\Real{z} =
  \frac{e^{\delta_n} + e^{-\delta_n} }{2}\cos(\theta),\,\,\,\Imag{z} =
  \frac{e^{\delta_n} - e^{-\delta_n}}{2}\sin(\theta): \theta \in
       [0,2\pi) , \hat \sigma_n \geq \delta_{n} \geq 0 \Big\}
  \end{split}
\]
and $\hat \sigma_n > 0$.  It is known that if a function $u:\tilde \Gamma \rightarrow \bbR$ 
can be  analytically extended onto the Bernstein ellipse ${\mathcal E}_{n,\hat \sigma_n}$ then the
error of the Lagrange  interpolation of $u$ will decay  exponentially with respect to the parameter
$\hat \sigma_n > 0 $ \cite{Castrillon2021}. This result has been extended to sparse grid polynomials in \cite{nobile2008a,nobile2008b} on suitable Banach spaces. However, the converge rate can
was shown to be algebraic or sub-exponential.

We set $\hat \sigma \equiv \min_{n=1,\dots,N} \hat \sigma_n,$ i.e. the decay is going to 
controlled by the Bernstein ellipse with the smallest size. This setting corresponds
to an isotropic sparse grid.
Suppose that $W = \bbR$ and  let
\[
\tilde{M}(\nu) := \sup_{\bz \in \mcE_{\hat \sigma_1, \dots, \hat
    \sigma_{N}}} |\nu(\bz)|.
\]
In addition let 
$\sigma = \hat{\sigma}/2$, $\mu_1 = \frac{\sigma}{1 + \log (2N)}$, and
$\mu_2(N) = \frac{\log(2)}{N(1 + \log(2N))}$, $\tilde{C}_{2}(\sigma) = 1 + \frac{1}{\log{2}}\sqrt{
\frac{\pi}{2\sigma}}
,\,\,\delta^{*}(\sigma) = \frac{e\log{(2)} - 1}{\tilde{C}_2
  (\sigma)}, 
C_1(\sigma,\delta,\tilde M(\func)) = \frac{4\tilde{M}(\func) C(\sigma)a(\delta,\sigma)}{ e\delta\sigma},$
\[
a(\delta,\sigma):=
\exp{
\left(
\delta \sigma \left\{
\frac{1}{\sigma \log^{2}{(2)}}
+ \frac{1}{\log{(2)}\sqrt{2 \sigma}}
+ 2\left( 1 + \frac{1}{\log{(2)}} 
\sqrt{ \frac{\pi}{2\sigma} }
\right)
\right\}
\right)
},
\]
$\mu_3 = \frac{\sigma \delta^{*} \tilde C_2(\sigma)}{1 + 2 \log
  (2N)}$, and
\[{\mathcal Q}(\sigma,\delta^{*}(\sigma),N, \tilde M(\func)) = 
\frac{ C_1(\sigma,\delta^{*}(\sigma),\tilde M(\func))}{\exp(\sigma
  \delta^{*}(\sigma) \tilde{C}_2(\sigma) )}
\frac{\max\{1,C_1(\sigma,\delta^{*}(\sigma),\tilde M(\func))\}^{N}}{|1
  - C_1(\sigma,\delta^{*}(\sigma),\tilde M(\func))|}.
\]

The following results is obtained \cite{nobile2008a,Castrillon2021,Castrillon2021b}:

\begin{theorem}
Suppose that $\nu \in C^{0}(\Gamma;\bbR)$ admits an analytic extension on
${\mathcal E}_{\hat \sigma_1, \dots, \hat \sigma_{N}}$ and is absolutely
bounded by $\tilde M(\nu)$.   Let $\mcS^{m,g}_{w}[\nu]$ be the 
sparse grid approximation of the function $\nu$ with Clenshaw-Curtis
abcissas.
If $w > N / \log{2}$ then 
\begin{equation}   
\|\nu - \mcS^{m,g}_{w}[\nu]
\|_{L^{\infty}(\Gamma)} 
\leq 
{\mathcal
  Q}(\sigma,\delta^{*}(\sigma),N, \tilde M(\func))
\eta^{\mu_3(\sigma,\delta^{*}(\sigma),N)}\exp
  \left(-\frac{N \sigma}{2^{1/N}} \eta^{\mu_2(N)} \right)
, \\
\label{erroranalysis:sparsegrid:estimate}
\end{equation}
Furthermore, if $w \leq N / \log{2}$ then the following algebraic
convergence bound holds:
\begin{equation}
  \begin{split}
    \| \nu - \mcS^{m,g}_{w}[\nu] \|_{L^{\infty}(\Gamma)}
    &\leq 
    \frac{C_1(\sigma,\delta^{*}(\sigma),\tilde M(\func))
      \max{\{1,C_{1}(\sigma,\delta^{*}(\sigma),\tilde{M}(\func))
        \}}^N
    }{
|1 - C_1(\sigma,\delta^{*}(\sigma),\tilde M(\func))|
}\eta^{-\mu_1}.
\end{split}
\label{erroranalysis:sparsegrid:estimate2}
\end{equation}
\label{erroranalysis:theorem1}
\end{theorem}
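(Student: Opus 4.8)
The plan is to follow the by-now standard Smolyak error analysis of \cite{nobile2008a,nobile2008b}, in the Banach-space-valued form of \cite{Castrillon2021,Castrillon2021b} (here with target space $W=\bbR$). First I would write the interpolation error as a telescoping tail. With the doubling rule $m(i)=2^{i-1}+1$ and $g(\ii)=\sum_{n=1}^N(i_n-1)$ one has
\[
\mcS^{m,g}_w[\nu] \;=\; \sum_{\ii\in\bbN_+^N:\, g(\ii)\le w}\ \bigotimes_{n=1}^N \Delta_n^{m(i_n)}(\nu),
\]
and, since analyticity of $\nu$ on the polyellipse forces the full tensor expansion $\nu=\sum_{\ii\in\bbN_+^N}\bigotimes_{n=1}^N \Delta_n^{m(i_n)}(\nu)$ to converge in $C^0(\Gamma;\bbR)$, the error is exactly the complementary sum $\nu-\mcS^{m,g}_w[\nu]=\sum_{g(\ii)>w}\bigotimes_{n=1}^N \Delta_n^{m(i_n)}(\nu)$. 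This reduces the problem to (i) a one-dimensional estimate for $\|\Delta_n^{m(i)}\nu\|$ and (ii) a combinatorial summation over the index set $\{g(\ii)>w\}$.

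For (i) I would invoke the classical Chebyshev interpolation estimate on Clenshaw--Curtis abscissas: a univariate function analytic on the Bernstein ellipse $\mcE_{n,\hat\sigma_n}$ and bounded there by (at most) $\tilde M(\nu)$ is interpolated at $m$ points with an error of order $\tilde M(\nu)\,e^{-\hat\sigma_n m}$ up to an algebraic-in-$m$ factor absorbed into the constants, whence $\|\Delta_n^{m(i)}\nu\|_{L^\infty(\tilde\Gamma)}\lesssim \tilde M(\nu)\,e^{-\hat\sigma_n m(i)}$. Tensorizing (using that analyticity on the polyellipse is preserved under the one-dimensional difference operators, cf. \cite{nobile2008a}) and using the isotropic reduction $\hat\sigma=\min_n\hat\sigma_n$,
\[
\Big\|\bigotimes_{n=1}^N \Delta_n^{m(i_n)}(\nu)\Big\|_{L^\infty(\Gamma)}\ \lesssim\ \tilde M(\nu)\prod_{n=1}^N e^{-\hat\sigma_n m(i_n)}\ \le\ \tilde M(\nu)\prod_{n=1}^N e^{-\hat\sigma\, m(i_n)}.
\]

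For (ii) I would group the terms of $\sum_{g(\ii)>w}\prod_{n=1}^N e^{-\hat\sigma m(i_n)}$ according to the value $k=g(\ii)$, bound the number of multi-indices with $g(\ii)=k$ by $\binom{k+N-1}{N-1}$, and sum the resulting scalar series over $k\ge w$. Splitting the exponent as $\hat\sigma=\sigma+\sigma$ with $\sigma=\hat\sigma/2$ — one copy of $\sigma$ devoted to dominating the binomial (hence $N$-dependent) growth, the other retained for decay — is precisely what manufactures the auxiliary quantities $\tilde C_2(\sigma)$, $\delta^*(\sigma)$, $a(\delta,\sigma)$, $C_1(\sigma,\delta,\tilde M(\nu))$, and, after the geometric summation, the prefactor $\mathcal{Q}(\sigma,\delta^*(\sigma),N,\tilde M(\nu))$. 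The last step converts this level-$w$ estimate into one in terms of the number $\eta$ of Clenshaw--Curtis knots: using the known count relating $\eta$ to $w$ and $N$ for the Smolyak grid and inverting it to express $w$ through $\eta$ produces the sub-exponential bound \eqref{erroranalysis:sparsegrid:estimate} when $w>N/\log 2$ and the purely algebraic bound \eqref{erroranalysis:sparsegrid:estimate2} when $w\le N/\log 2$, with the stated exponents $\mu_1,\mu_2,\mu_3$. The main obstacle is the bookkeeping in step (ii): carrying all the $N$-dependent constants through the binomial bound, the exponent split, the geometric summation, and the inversion $\eta\leftrightarrow w$ so that the closed forms for $\mathcal{Q}$ and for $\mu_1,\mu_2,\mu_3$ emerge exactly as stated; by comparison the analytic input of step (i) and the tensorization are routine, so in practice the argument can largely be quoted from \cite{nobile2008a,Castrillon2021,Castrillon2021b}, the hypotheses of which are met here because $\nu$ is, by \Cref{mainresult2}, analytic and bounded on the relevant polyellipse.
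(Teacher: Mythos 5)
Your proposal is correct and matches the paper's treatment: the paper proves this theorem simply by citing Theorems 3.10 and 3.11 of \cite{nobile2008a}, and your sketch is an accurate outline of exactly that argument (telescoping the Smolyak sum, univariate Bernstein-ellipse estimates on Clenshaw--Curtis nodes, combinatorial summation over levels, and the $\eta\leftrightarrow w$ inversion), concluding, as the paper does, that the result can be quoted from \cite{nobile2008a,Castrillon2021,Castrillon2021b} once the analyticity and boundedness hypotheses are verified.
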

\begin{proof} Theorem 3.10 and 3.11 in \cite{nobile2008a}.
  \end{proof}

Note that in many practical cases not all dimensions of $\Gamma$ are equally
important.  In these cases the dimensionality of the sparse grid can
be significantly reduced by  exploiting the size of each Bernstein ellipse.
By adapting the restriction function $g$ to the $\sigma_n$ along each dimension,
a anisotropic sparse grid is produced with higher convergence rates with
respect to the number of knots $\eta$ \cite{nobile2008b}.

\section{Numerical Results}
\label{numericalresults}

In this section we test the analyticity result by studying the statistical 
behaviour of a quantity of interest of the potential field of the Trypsin protein 
in a solvent. The Trypsin protein molecule (PDB:1ppe \cite{Berman2000}) consists of $n = 1,852$ atoms.
We use the  Adaptive Poisson Boltzmann Solver (APBS) \cite{Baker2001}  for the computation
of potential field on a rectangular domain of size $70 \times 70 \times 70$ \AA ngstroms and
set $\partial \Omega$ with Dirichlet boundary conditions equal to zero. The temperature
of the solvent is set to 310 Kelvin (human body temperature).

Let $\mcC$ be the collection of point charges in the Trypsin molecule.
The Domain $\Omega$ is split into an internal component $\mcA_{I}$, representing
the domain of the molecule that is constructed from the APBS code  and  $\mcA_{E}$, 
which corresponds to the solvent. The point charges $\bx_1, \dots \bx_n$ correspond
to the center of each atom in Trypsin. However, the APBS code replaces these point
charges with functions in $L^{2}(\Omega)$. See Figure \ref{NumericalResults:Fig1} (a)
for a cartoon example.

We now built the stochastic model and shift the point charges in $\mcC$ as follows:
\[
\mcC(\omega) = \{x = x_0 + \sum_{k = 1}^{N} \alpha_{k} \bbe_{k}Y_{k}(\omega) \,|\, x_0 \in \mcC\},
\]
where
\[
\bbe_1 = [1,0,0]^{T},  \bbe_2 = [0,1,0]^{T},  \bbe_3 = [0,0,1]^{T}, \alpha_1 = \alpha_2 = \alpha_3 = 10,
\]
$N \leq 3$, and $Y_1(\omega)$, $Y_2(\omega)$, $Y_3(\omega)$ are uniformly distributed  over the domain $[-\sqrt{3},\sqrt{3}]$ 
and independent of each other. The internal domain $\mcA_{I}(\omega)$ and external domains 
$\mcA_{E}(\omega)$ are then formed. For the internal domain the dielectric $\epsilon_{I}(\omega)$
is assumed to be a constant and $\kappa_{I}(\omega)=0$. For the external domain the dielectric $\epsilon_{E}(\omega)$
and $\kappa_{E}(\omega)$ are assumed to be constants. Note that this is common assumption for
potential field computational packages such as APBS. Furthermore, the point charges $\bx_1(\omega), \dots \bx_n(\omega)$ 
are shifted as follows
\[
\bx_k(x - x_0) \rightarrow \bx_k\left( x - x_0 - \sum_{k = 1}^{N} \alpha_{k} \bbe_{k}Y_{k}(\omega)\right),
\]
Given this stochastic model we computed the expected valued of the spatial mean of the potential field $\eset{Q}$, where
\[
Q := \int_{\Omega} u(Y_1,\dots,Y_N)\,dx
\]
 with respect to stochastic domain shifts.

 Several experiments are now performed:

 \begin{itemize}
     \item $N = 2$ and the dielectric is set to 2, i.e. $\epsilon_I(\omega) = \epsilon_E(\omega) = 2$ (unitless). The     
     Debye-H\"uckel $\kappa$ is set by APBS. The level of the sparse grid is set from $w = 2,3,4,5$. Since we do not have analytic results we assume that $\eset{Q(u)}$ for $w = 5$ is the actual solution. In Figure \ref{NumericalResults:Fig1} (b)
    the convergence graphs of accuracy vs dimensionality of the sparse grid (Work) are plotted for $w = 2,3,4)$. Observe that
    the convergence rate is faster than algebraic. This is consistent with Theorem \ref{erroranalysis:theorem1} and 
    with the result that the solution of the nPBE admits an analytic extension in the complex hyperplane.

    \item The experiment is now repeated with $N = 3$. Observe that the convergence rate is algebraic or faster.

    \item We now extend the experiments for $N = 2$ and $N = 3$ with discontinuous dielectric interface. 
    The internal dielectric is set to 3
    and the external to 2 i.e. $\epsilon_I(\omega) 3$ and $\epsilon_E(\omega) = 2$. From Figure \ref{NumericalResults:Fig1} 
    (c) it is observed that the convergence are almost identical to Figure \ref{NumericalResults:Fig1} (b). Although 
    a discontinuous interface is not covered under the theory developed in this paper, the convergence rates are
    still consistent with the solution of the nPBE admits a complex analytic extension in the complex hyperplane.
 \end{itemize}

\begin{figure}[htb]
   \centering
   \begin{tikzpicture}
   \begin{scope}[scale = 0.8, every node/.style={scale=0.9}]]   
\draw[dashed,fill=black!30!green, opacity=0.2] (0,0) to
[closed, curve through={(1,1) .. (0.3,0) .. (3,0.1)}] (4,0);
\node at (3,-0.9) {$\bx_1$};
\draw[fill = black] (3.3,-1.1) circle (2pt);
\node at (3,-1.5) {$\bx_{k}$};
\draw[fill = black] (2.7,-1.7) circle (2pt);
\node at (3,-0.3) {$\bx_{n}$};
\draw[fill = black] (3.1,-0.6) circle (2pt);
\node at (1.5,-1.75) {$\epsilon_I$};
\node at (1.5,-2.5) {$\epsilon_E$};
\node at (0.5,0.8) {$\kappa_I$};
\node at (0.5,1.5) {$\kappa_E$};
\node at (1,-1) {${\mcA}_{I}(\omega)$};
\node at (2,0.1) {${\mcA}_{E}(\omega)$};
\node at (2.3,2.5) {$\partial \Omega = 0$};
\node [rotate = 90] at (-1.55,-0.8) {$\partial \Omega = 0$};
\node [rotate = 90] at (5.9,-0.8) {$\partial \Omega = 0$};
\node at (-0.6,1.6) {$\Omega$};
\node at (2.3,-4) {$\partial \Omega = 0$};
\draw[black!60!green]  (-1,-3.5) rectangle (5.4,2);
\end{scope}
\node at (1.85,-3.75) {(a)};
\end{tikzpicture}
   \begin{tikzpicture}[scale = 1, every node/.style={scale=1}]]   
     \node at (0,0) {\includegraphics[scale=0.65, trim = 2.85cm 10cm 2.85cm 10.4cm, clip]{./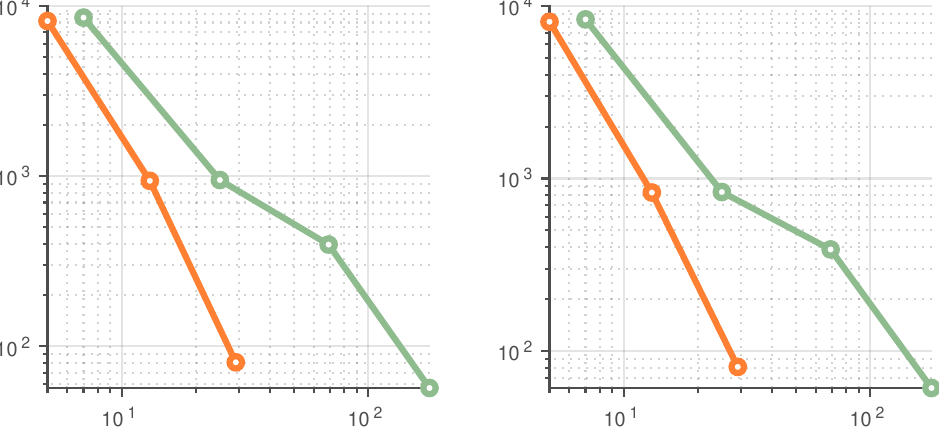}};
     \node at (-2.5,-2.4) {(b)};
     \node at (3,-2.4) {(c)};   
     \node at (-2.5,-.4) {N = 2};
     \node at (-2.5,1) {N = 3};

     \node [rotate = 90] at (-5.5,0.3) {\corb{$|\eset{Q(u)} - \eset{\mcS^{m,g}_{w}[Q(u)]}|$}}; 
     \node [rotate = 90] at (0,0.3) {\corb{$|\eset{Q(u)} - \eset{\mcS^{m,g}_{w}[Q(u)]}|$}}; 
     \node at (3,-.4) {N = 2};
     \node at (3,1) {N = 3};
    \end{tikzpicture}
    \caption{Convergence rates for the Trypsin protein (PDB:1ppe) with stochastic shifts.
    (a) Trypsin protein in a rectangular domain of size $70 \times 70 \times 70$ \AA ngstroms
    with zero Dirichlet boundary conditions. $\bx_1, \dots, \bx_n$ are the centers
    of the atoms with charge dependent on the class of atom. The dielectric coefficient $\epsilon$ is split into internal 
    $\epsilon_{\mcA_{I}}$ and external $\epsilon_{\mcA_{E}}$.
    (b) Accuracy vs dimensionality convergence rates for internal and external dielectric coefficient set
    to 2. Notice that the convergence rate is at least algebraic for $N = 2$ and $N = 3$ stochastic dimensions
    and thus consistent with the existence of an analytic extension into the complex plane for the solution 
    of the nPBE. (c) The experiment is repeated for a discontinuous interface where the internal dielectric is
    set to 3 and the external to 2. Note that the convergence rate is almost identical to the non-discontinuous
    experiment. This indicates that the complex analytic result can be extended to discontinuous interface problems.
    }
    \label{NumericalResults:Fig1}
\end{figure}

 \begin{itemize}
     \item $N = 2$ and the dielectric is set to 2, i.e. $\epsilon_I(\omega) = \epsilon_E(\omega) = 2$ (unitless). The     
     Debye-H\"uckel $\kappa$ is set by APBS. The level of the sparse grid is set from $w = 2,3,4,5$. Since we do not have analytic results we assume that $\eset{Q(u)}$ for $w = 5$ is the actual solution. In Figure \ref{NumericalResults:Fig1} (b)
    the convergence graphs of accuracy vs dimensionality of the sparse grid (Work) are plotted for $w = 2,3,4)$. Observe that
    the convergence rate is faster than algebraic. This is consistent with Theorem \ref{erroranalysis:theorem1} and 
    with the result that the solution of the nPBE admits an analytic extension in the complex hyperplane.

    \item The experiment is now repeated with $N = 3$. Observe that the convergence rate is algebraic or faster.

    \item We now extend the experiments for $N = 2$ and $N = 3$ with discontinuous dielectric interface. 
    The internal dielectric is set to 3
    and the external to 2 i.e. $\epsilon_I(\omega) 3$ and $\epsilon_E(\omega) = 2$. From Figure \ref{NumericalResults:Fig1} 
    (c) it is observed that the convergence are almost identical to Figure \ref{NumericalResults:Fig1} (b). Although 
    a discontinuous interface is not covered under the theory developed in this paper, the convergence rates are
    still consistent with the solution of the nPBE admits a complex analytic extension in the complex hyperplane.
 \end{itemize}

\section{Conclusion.}\label{conclusion}

In this paper, we first showed the existence and uniqueness of the complexified nPBE, and applied the result to show that the solution to the nPBE (and semi-linear elliptic 
PDEs in general) admit a complex analytic extension with respect to the
stochastic variables. Our theoretical predictions were tested by numerically verifying the convergence rate of the sparse grid stochastic collocation approximation with Clenshaw-Curtis abcissas.  

The biggest difference between our model and the real-valued nPBE equation stems from the non-convexity of the nonlinearity on $\mathbb{C}$, which makes it difficult to directly apply variational calculus to our model. See \cite[Theorem 2.14]{Holst1994} that used the variational approach based on convexity to study the real nPBE.


Our future work will focus on the theoretical and numerical studies of nPBE for discontinuous dielectric interface, which more accurately describes the underlying physics. In the Debye-H\"uckel model, a collection of macromolecules such as proteins is located in the region $\Omega_1\subseteq\mathbb{R}^3$, surrounded by the ion-exclusion layer $\Omega_2$, which in turn is surrounded by the solvent of positive and negative charges in $\Omega_3$. Altogether, let $\Omega := \cup_{i=1}^3 \Omega_i$. According to the Debye-H\"uckel model,
\begin{equation}\label{dielectric}
	\epsilon(x) = 
	\begin{cases}
		\epsilon_1>0, &x \in \Omega_1\\
		\epsilon_2>0, &x \in \Omega_2 \cup \Omega_3.
	\end{cases}
\end{equation}
Our analysis assumes that $\epsilon$ is Lipschitz-continuous, and therefore does not cover the case \eqref{dielectric}; the regularity assumption plays a crucial role in obtaining the elliptic regularity results such as \Cref{C_H0} and \Cref{C_H00}. Therefore, it is of interest to develop the functional-analytic framework to study the nPBE with non-trivial interfaces that extends the traditional Debye-H\"uckel model given by \eqref{dielectric}.

\begin{appendices}
\crefalias{section}{appsec}

\section{Estimates for the Constants.}\label{constantest}

In \Cref{Schauder}, the existence and uniqueness of solutions of \Cref{npb} depend in part on the values of the constants \(C_S(s)\), \(C_H\), and \(C_D\) described in \Cref{important_constants}. Thus having explicit estimates for these constants is important in determining the parameter values for which there are solutions. In this section, we demonstrate bounds for these constants. \Cref{h1,h2,h3} are still assumed to hold throughout \Cref{constantest}.

\subsection{Estimates for $C_D$}


	\begin{lemma}
	Let $L$ be as in \Cref{set-up}. Then
	\begin{equation}\label{bounded}
		 C_D \leq 2d^2 \lVert \epsilon \rVert_{W^{1,\infty}} + \lVert \kappa \rVert_{L^\infty}^2.
	\end{equation}
\end{lemma}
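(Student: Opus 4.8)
The plan is to expand $Lu$ in coordinates, apply the triangle inequality term by term, and bound each resulting summand by an $L^\infty$-norm of a coefficient times an $L^2$-norm of a derivative of $u$, all controlled by $\lVert u\rVert_{H^2}$. Write the $i$-th component of the flux $\epsilon\nabla u$ as $\sum_{j=1}^d \epsilon^{ij}\partial_j u$, so that by the product rule
\[
	Lu = -\sum_{i,j=1}^d \big[(\partial_i \epsilon^{ij})(\partial_j u) + \epsilon^{ij}\,\partial_i\partial_j u\big] + \kappa^2 u .
\]
This is legitimate because $\epsilon\in W^{1,\infty}(\Omega,\mathbb C^{d^2})$ and $u\in H^2(\Omega)$, as already noted in the discussion following \Cref{npbweak}.

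Next I would estimate in $L^2(\Omega)$. For the zeroth-order term, H\"older gives $\lVert \kappa^2 u\rVert_{L^2}\le \lVert\kappa\rVert_{L^\infty}^2\lVert u\rVert_{L^2}\le \lVert\kappa\rVert_{L^\infty}^2\lVert u\rVert_{H^2}$. For the two sums coming from the divergence term, apply the triangle inequality to pull the $d^2$ summands apart, then H\"older on each: $\lVert (\partial_i\epsilon^{ij})(\partial_j u)\rVert_{L^2}\le \lVert\partial_i\epsilon^{ij}\rVert_{L^\infty}\lVert\partial_j u\rVert_{L^2}$ and $\lVert \epsilon^{ij}\partial_i\partial_j u\rVert_{L^2}\le \lVert\epsilon^{ij}\rVert_{L^\infty}\lVert\partial_i\partial_j u\rVert_{L^2}$. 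Since $\lVert\epsilon^{ij}\rVert_{L^\infty}$ and $\lVert\partial_i\epsilon^{ij}\rVert_{L^\infty}$ are all $\le \lVert\epsilon\rVert_{W^{1,\infty}}$, and since $\lVert\partial_j u\rVert_{L^2}\le\lVert u\rVert_{H^2}$ and $\lVert\partial_i\partial_j u\rVert_{L^2}\le\lVert u\rVert_{H^2}$ directly from the definition of the $H^2$ norm, each of the two sums is bounded by $d^2\lVert\epsilon\rVert_{W^{1,\infty}}\lVert u\rVert_{H^2}$. Adding the three contributions yields $\lVert Lu\rVert_{L^2}\le (2d^2\lVert\epsilon\rVert_{W^{1,\infty}}+\lVert\kappa\rVert_{L^\infty}^2)\lVert u\rVert_{H^2}$ for all $u\in H^2(\Omega)\cap H^1_0(\Omega)$, and taking the infimum over admissible constants in \Cref{important_constants} gives \eqref{bounded}.

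There is no real obstacle here; the only point requiring a little care is the bookkeeping of the $d^2$ terms and the (deliberately crude) use of $\lVert\partial_j u\rVert_{L^2}\le\lVert u\rVert_{H^2}$ and $\lVert\partial_i\partial_j u\rVert_{L^2}\le\lVert u\rVert_{H^2}$ rather than the sharper $\ell^2$-aggregation, since the stated bound is not claimed to be optimal. One should also make explicit the convention that $\lVert\epsilon\rVert_{W^{1,\infty}}$ denotes the maximum over $1\le i,j\le d$ of $\lVert\epsilon^{ij}\rVert_{L^\infty}$ and $\lVert\partial_i\epsilon^{ij}\rVert_{L^\infty}$, so that the single symbol $\lVert\epsilon\rVert_{W^{1,\infty}}$ dominates every coefficient factor appearing above.
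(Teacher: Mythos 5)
Your proposal is correct and follows essentially the same route as the paper's proof: expand the divergence term by the product rule, split into the $d^2$ summands by the triangle inequality, bound each coefficient by $\lVert \epsilon \rVert_{W^{1,\infty}}$ and each derivative of $u$ by $\lVert u \rVert_{H^2}$, and add the zeroth-order bound $\lVert \kappa \rVert_{L^\infty}^2 \lVert u \rVert_{H^2}$. The only differences are cosmetic (you separate the first- and second-order sums before aggregating, and you spell out the norm convention for $\epsilon$), so nothing further is needed.
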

\begin{proof}
	Since
	\[
		\lVert \kappa^2 v \rVert_{L^2} \leq \| \kappa \|_{L^\infty}^2 \lVert v \rVert_{L^2} \leq \| \kappa \|_{L^\infty}^2 \lVert v \rVert_{H^2},    
	\]
	it suffices to estimate $\lVert \nabla \cdot (\epsilon \nabla v)\rVert_{L^2}$. By the triangle inequality,
	\[
	\begin{split}
		\lVert \nabla \cdot (\epsilon \nabla v)\rVert_{L^2} &= \lVert \sum_{i,j} \partial_i (\epsilon^{ij}\partial_j v)  \rVert_{L^2} \leq \sum_{i,j} \lVert \partial_i (\epsilon^{ij}\partial_j v) \rVert_{L^2} \leq \sum_{i,j} \lVert \partial_i \epsilon^{ij} \partial_j v \rVert_{L^2} + \lVert \epsilon^{ij} \partial_{ij}v \rVert_{L^2}\\
		&\leq \sum_{i,j} (\lVert \partial_i \epsilon^{ij} \rVert_{L^\infty} + \lVert \epsilon^{ij} \rVert_{L^\infty}) \lVert v \rVert_{H^2} \leq 2d^2 \lVert \epsilon \rVert_{W^{1,\infty}} \lVert v \rVert_{H^2},
		\end{split}
	\]
	and hence \eqref{bounded}.
	
\end{proof}

\subsection{Estimates for $C_S(2)$.}

In this subsection, we are interested in obtaining an upper bound of the operator norm of $H^2(\Omega)\hookrightarrow L^\infty(\Omega)$ where $\Omega \subseteq \mathbb{R}^3$. To obtain this Sobolev inequality constant, a standard trick is to obtain the desired constant for the full domain $\mathbb{R}^d$. Any reasonably regular function defined on $\Omega$ can be extended to $\mathbb{R}^d$ via an extension operator. Composing these two, one obtains a Sobolev inequality on $\Omega$. See \cite[Chapter 5]{evans2010partial} for an exposition of this material. To apply the estimates obtained in \cite{mizuguchi2017estimation}, we lay out the following notation. For $1 \leq p < q \leq \infty$, let $C_{p,q},D_{p,q} >0$ such that for every $v \in W^{1,p}(\Omega)$ and $v_\Omega \coloneqq |\Omega|^{-1}\int_\Omega v$,
\begin{equation}\label{mizuguchi}
	\lVert v \rVert_{L^q(\Omega)} \leq C_{p,q} \lVert v \rVert_{W^{1,p}(\Omega)},\ \lVert v - v_\Omega \rVert_{L^q(\Omega)} \leq D_{p,q} \lVert \nabla v \rVert_{L^p(\Omega)}.
\end{equation}

To estimate $C_{2,p}$ and $C_{p,\infty}$, we cite

\begin{lemma}\cite[Theorem 2.1]{mizuguchi2017estimation}\label{mizuguchi3} For $\Omega \subseteq \mathbb{R}^d$, if $D_{p,q}>0$ is given as \eqref{mizuguchi}, then 
	\[
		C_{p,q} = 2^{1-\frac{1}{p}} \max(|\Omega|^{\frac{1}{q}-\frac{1}{p}},D_{p,q}).
	\] 
\end{lemma}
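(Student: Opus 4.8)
The plan is to derive the Sobolev inequality constant $C_{p,q}$ from the Poincar\'e-type constant $D_{p,q}$ by splitting an arbitrary $v \in W^{1,p}(\Omega)$ into its average $v_\Omega = |\Omega|^{-1}\int_\Omega v$ and its fluctuation $v - v_\Omega$: the fluctuation is controlled directly by $D_{p,q}$ via \eqref{mizuguchi}, the constant part by a H\"older inequality, and the two contributions are recombined using the elementary bound $a+b \le 2^{1-1/p}(a^p+b^p)^{1/p}$ valid for $a,b \ge 0$.

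First I would apply the triangle inequality in $L^q(\Omega)$ together with the definition of $D_{p,q}$ to write
\[
\|v\|_{L^q(\Omega)} \le \|v - v_\Omega\|_{L^q(\Omega)} + \|v_\Omega\|_{L^q(\Omega)} \le D_{p,q}\,\|\nabla v\|_{L^p(\Omega)} + |v_\Omega|\,|\Omega|^{1/q},
\]
where $|\Omega|^{1/q}$ is read as $1$ when $q=\infty$. Since $|v_\Omega| \le |\Omega|^{-1}\|v\|_{L^1(\Omega)} \le |\Omega|^{-1/p}\|v\|_{L^p(\Omega)}$ by H\"older's inequality with conjugate exponents $p,p'$, the constant part obeys $\|v_\Omega\|_{L^q(\Omega)} \le |\Omega|^{1/q-1/p}\|v\|_{L^p(\Omega)}$, and therefore
\[
\|v\|_{L^q(\Omega)} \le \max\bigl(D_{p,q},\,|\Omega|^{1/q-1/p}\bigr)\bigl(\|v\|_{L^p(\Omega)} + \|\nabla v\|_{L^p(\Omega)}\bigr).
\]
To pass from this $\ell^1$-type sum to the norm $\|v\|_{W^{1,p}(\Omega)} = (\|v\|_{L^p(\Omega)}^p + \|\nabla v\|_{L^p(\Omega)}^p)^{1/p}$, I would invoke convexity of $t\mapsto t^p$ on $[0,\infty)$, which gives $a+b \le 2^{1-1/p}(a^p+b^p)^{1/p}$ (equality iff $a=b$); applied with $a = \|v\|_{L^p(\Omega)}$ and $b = \|\nabla v\|_{L^p(\Omega)}$ this yields
\[
\|v\|_{L^q(\Omega)} \le 2^{1-1/p}\max\bigl(|\Omega|^{1/q-1/p},\,D_{p,q}\bigr)\,\|v\|_{W^{1,p}(\Omega)},
\]
that is, $C_{p,q} \le 2^{1-1/p}\max(|\Omega|^{1/q-1/p},D_{p,q})$, which is exactly the direction needed to feed explicit constants into \Cref{Schauder}.

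For the reverse inequality, and hence the stated equality for the optimal constant, I would trace the equality cases above: testing with $v\equiv 1$ (so $\nabla v = 0$ and $D_{p,q}$ is irrelevant) already forces $C_{p,q} \ge |\Omega|^{1/q-1/p}$, whereas $C_{p,q} \ge 2^{1-1/p}D_{p,q}$ requires a sequence of competitors that nearly saturates the Poincar\'e-type inequality in \eqref{mizuguchi}, is sign-aligned with its own mean so that the triangle inequality in $L^q$ is asymptotically tight, and is simultaneously balanced so that $\|v\|_{L^p(\Omega)}$ and $\|\nabla v\|_{L^p(\Omega)}$ are asymptotically equal. Constructing such competitors for a general admissible $\Omega$ — rather than for a convenient model domain — is the delicate point; this is the content carried out in \cite[Theorem 2.1]{mizuguchi2017estimation}. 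Since only the upper bound is used in this paper, one may alternatively read the displayed formula simply as the definition of the constant $C_{p,q}$ employed in \Cref{constantest}.
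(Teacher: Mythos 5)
This lemma is not proved in the paper at all: it is quoted from \cite[Theorem 2.1]{mizuguchi2017estimation}, and the proof there is exactly the argument you give --- split $v$ into its mean and fluctuation, bound the fluctuation by $D_{p,q}$ and the mean part by H\"older, then recombine via $a+b\le 2^{1-1/p}(a^p+b^p)^{1/p}$ --- so your derivation of the bound $\|v\|_{L^q}\le 2^{1-1/p}\max(|\Omega|^{1/q-1/p},D_{p,q})\|v\|_{W^{1,p}}$ is correct and is essentially the same proof. Your reading of the ``$=$'' sign is also the right one: since \eqref{mizuguchi} only requires $C_{p,q}$ to be \emph{some} admissible constant (not the optimal one), the lemma asserts that $2^{1-1/p}\max(|\Omega|^{1/q-1/p},D_{p,q})$ is a valid choice, which your upper bound already establishes; the final paragraph about saturating competitors is therefore unnecessary (and optimality is not what the cited Theorem 2.1 claims, so nothing is missing from your argument as far as the paper's use of the lemma is concerned).
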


The estimation for the Sobolev embedding constant, therefore, reduces to computing $D_{p,q}$, which is summarized in the following two lemmas:

\begin{lemma}\cite[Theorem 3.2]{mizuguchi2017estimation}\label{mizuguchi4} Let $p \in (2,6]$ and $v \in H^1(\Omega)$ where we further suppose that $\Omega$ is convex. Then, we have $\lVert v-v_\Omega \rVert_{L^p(\Omega)} \leq D_{2,p} \lVert \nabla v \rVert_{L^2(\Omega)}$ with
	\begin{equation}\label{sobconst1}
		D_{2,p} =\frac{d_\Omega^{1+\frac{3(p+2)}{2p}}\pi^{\frac{3(p+2)}{4p}}}{3|\Omega|} \frac{\Gamma(\frac{3(p-2)}{4p})}{\Gamma(\frac{3(p+2)}{4p})}\sqrt{\frac{\Gamma(\frac{3}{p})}{\Gamma(\frac{3(p-1)}{p})}}\bigg(\frac{4}{\sqrt{\pi}}\bigg)^{\frac{p-2}{2p}}. 
	\end{equation}
	Hence
	\[
		C_{2,p} = 2^{\frac{1}{2}}\max(|\Omega|^{\frac{1}{p}-\frac{1}{2}},D_{2,p}).
	\]
\end{lemma}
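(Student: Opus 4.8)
The statement splits into two parts. The second one — the identity $C_{2,p}=2^{1/2}\max(|\Omega|^{1/p-1/2},D_{2,p})$ — is nothing but \Cref{mizuguchi3} with the pair $(p,q)$ there specialized to $(2,p)$, so no new work is needed for it. Hence the whole content is the Poincar\'e-type inequality $\lVert v-v_\Omega\rVert_{L^p(\Omega)}\le D_{2,p}\lVert \nabla v\rVert_{L^2(\Omega)}$ with the explicit constant \eqref{sobconst1}, and for this I would follow a Payne--Weinberger-style argument: reduce a pointwise bound on $v-v_\Omega$ to a Riesz potential of $|\nabla v|$, then estimate that potential in $L^p$. By density of $C^1(\overline{\Omega})$ in $H^1(\Omega)$ (valid since a convex $\Omega$ is Lipschitz) and continuity of both sides, it suffices to treat smooth $v$.

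\emph{Step 1: pointwise reduction to a Riesz potential.} I would start from $v(x)-v_\Omega=|\Omega|^{-1}\int_\Omega\bigl(v(x)-v(y)\bigr)\,dy$. Because $\Omega$ is convex the segment joining $y$ to $x$ stays in $\Omega$, so the fundamental theorem of calculus gives $v(x)-v(y)=-\int_0^{|x-y|}\nabla v(x+r\theta)\cdot\theta\,dr$ with $\theta=(y-x)/|y-x|$. Writing the $y$-integral in polar coordinates centred at $x$, interchanging the two radial integrations, and using $\int_r^{R}\rho^{d-1}\,d\rho\le d_\Omega^d/d$ for any exit radius $R\le d_\Omega$, one obtains $|v(x)-v_\Omega|\le \frac{d_\Omega^d}{d\,|\Omega|}\int_\Omega |x-y|^{-(d-1)}\,|\nabla v(y)|\,dy$. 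For $d=3$ the prefactor is $d_\Omega^3/(3|\Omega|)$, which is exactly the $d_\Omega^{3}/(3|\Omega|)$ factor visible in \eqref{sobconst1}; it then remains to bound the Riesz potential $I_1 g(x):=\int_\Omega |x-y|^{-(d-1)}g(y)\,dy$ with $g=|\nabla v|$.

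\emph{Step 2: $L^2\to L^p$ bound for the Riesz potential.} The quick route is Young's convolution inequality after extending $g$ by zero: since $x,y\in\Omega$ forces $|x-y|\le d_\Omega$, the kernel may be taken supported in $B(0,d_\Omega)$, and one gets $\lVert I_1 g\rVert_{L^p}\le \bigl\lVert\,|\cdot|^{-(d-1)}\,\bigr\rVert_{L^r(B(0,d_\Omega))}\,\lVert g\rVert_{L^2}$ with $\tfrac1r=\tfrac12+\tfrac1p$. Evaluating the kernel norm via $|S^{d-1}|=2\pi^{d/2}/\Gamma(d/2)$ shows (for $d=3$) that one needs $r=\tfrac{2p}{p+2}<\tfrac32$, i.e.\ $p<6$, and produces a constant whose $d_\Omega$-exponent, after multiplying by the Step 1 prefactor, is $3+\tfrac3p-\tfrac12=1+\tfrac{3(p+2)}{2p}$ — matching \eqref{sobconst1}. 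However, Young's inequality is too lossy to produce the precise Gamma factors of \eqref{sobconst1}. To get those I would instead invoke the sharp one-dimensional reduction underlying Payne--Weinberger: slice $\Omega$ into thin needles and reduce the estimate to a weighted one-dimensional inequality $\lVert f-\bar f\rVert_{L^p(I,w)}\le C\lVert f'\rVert_{L^2(I,w)}$ on an interval $I$ of length $\le d_\Omega$, then apply the sharp Bliss-type one-dimensional Sobolev inequality, whose optimal constant is a ratio of Gamma functions. Tracking the needle weights and collecting all constants then delivers \eqref{sobconst1}.

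\emph{Main obstacle.} The difficulty lies entirely in Step 2: obtaining the \emph{optimal} constant rather than merely a finite one, which forces the delicate one-dimensional sharp inequality (and the bookkeeping of the needle weights) in place of the elementary convolution bound. A secondary point is the endpoint $p=6$, the critical Sobolev exponent for $H^1(\mathbb{R}^3)$: there the kernel $|w|^{-2}$ just fails to lie in $L^{3/2}(B(0,d_\Omega))$ and the Young-inequality shortcut degenerates, whereas the sharp one-dimensional route — and the closed form \eqref{sobconst1}, which stays finite at $p=6$ — covers the full range $(2,6]$ uniformly; one should then verify directly that \eqref{sobconst1} gives the correct value at $p=6$.
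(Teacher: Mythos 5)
First, note that the paper does not prove this statement at all: it is quoted verbatim, with its constant, from \cite[Theorem 3.2]{mizuguchi2017estimation}, so there is no internal proof to match and your second part (deducing the $C_{2,p}$ formula from \Cref{mizuguchi3}) is exactly what the paper intends. Your Step 1 is also the standard — and the cited source's — starting point: for convex $\Omega$ the pointwise bound $|v(x)-v_\Omega|\le \frac{d_\Omega^{3}}{3|\Omega|}\int_\Omega |x-y|^{-2}|\nabla v(y)|\,dy$ is correct, and your exponent bookkeeping for the Young-inequality shortcut ($d_\Omega^{5/2+3/p}$, failure at $p=6$) is accurate.

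The genuine gap is Step 2, which is where the entire content of the lemma lives. The lemma asserts the \emph{specific} constant \eqref{sobconst1}; Young's inequality yields a different (and for your purposes unusable) constant, and your proposed replacement — a Payne--Weinberger needle decomposition reducing to a weighted one-dimensional Bliss-type inequality, with the assertion that ``tracking the needle weights and collecting all constants then delivers \eqref{sobconst1}'' — is not carried out and cannot simply be asserted. Nothing in the sketch shows how arguments such as $\Gamma\bigl(\tfrac{3(p-2)}{4p}\bigr)$, $\Gamma\bigl(\tfrac{3}{p}\bigr)$, $\Gamma\bigl(\tfrac{3(p-1)}{p}\bigr)$ or the factor $(4/\sqrt{\pi})^{(p-2)/(2p)}$ would emerge from a one-dimensional slicing; that functional form is characteristic of explicit Riesz-potential computations in $\mathbb{R}^3$ (Hardy--Littlewood--Sobolev-type estimates with explicit constants applied to the kernel $|x-y|^{-2}$, which is how the cited source proceeds after your Step 1), not of a needle reduction. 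Moreover, the range includes the critical exponent $p=6$, where the inequality encodes genuinely three-dimensional Sobolev scaling; a one-dimensional reduction does not see this criticality, and you give no argument that the slicing survives there, only a promise to ``verify directly.'' As written, the proposal establishes a Poincar\'e--Sobolev inequality with \emph{some} finite constant for $p<6$, but not the inequality with the constant $D_{2,p}$ of \eqref{sobconst1} on the full range $p\in(2,6]$, which is what the lemma states and what the later quantitative estimates (e.g.\ \Cref{sobolevestimate}) actually use.
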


\begin{lemma}\cite[Theorem 3.4]{mizuguchi2017estimation}
	\label{mizuguchi5}
	For $p >3$ and $v \in W^{1,p}(\Omega)$, we have $\lVert v-v_\Omega \rVert_{L^\infty} \leq D_{p,\infty} \lVert \nabla v \rVert_{L^p}$ with
	\begin{equation}\label{sobconst3}
		D_{p,\infty} = \frac{d_\Omega^3}{3|\Omega|} \left|\left| |x|^{-2}\right|\right|_{L^{p^\prime}(V)}, 
	\end{equation}
	where $\Omega_x \coloneqq \left\{ x-y: y \in \Omega\right\}$ and $V = \bigcup\limits_{x \in \Omega} \Omega_x$.\footnote{$p^\prime \coloneqq \frac{p}{p-1}$ denotes the H\"older conjugate of $p$.} Hence
	\[
		C_{p,\infty} = 2^{1-\frac{1}{p}}\max(|\Omega|^{-\frac{1}{p}},D_{p,\infty}).
	\]
\end{lemma}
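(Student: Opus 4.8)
The statement is a quantitative form of Morrey's inequality, so the plan is to track constants through the classical potential‑representation proof and then read off $C_{p,\infty}$ from \Cref{mizuguchi3}. Since $\Omega$ has a $C^2$ boundary by \Cref{h1}, $C^1(\overline{\Omega})$ is dense in $W^{1,p}(\Omega)$, so it suffices to prove the inequality for $v \in C^1(\overline{\Omega})$ and pass to the limit; because $p > 3 = d$, the limit function has a continuous representative on which the pointwise bound holds everywhere. For $v \in C^1(\overline{\Omega})$ and $x \in \Omega$, I would start from the mean‑oscillation identity
\[
v(x) - v_\Omega = \frac{1}{|\Omega|}\int_\Omega \bigl(v(x)-v(y)\bigr)\,dy,
\]
and, using convexity of $\Omega$, write $v(x)-v(y) = -\int_0^{|x-y|}\nabla v(x+\rho\omega)\cdot\omega\,d\rho$ with $\omega = (y-x)/|y-x|$; the whole segment $[x,y]$ lies in $\Omega$, so $|v(x)-v(y)| \le \int_0^{|x-y|}|\nabla v(x+\rho\omega)|\,d\rho$.

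Next I would pass to polar coordinates centred at $x$, namely $y = x+r\omega$ with $dy = r^{d-1}\,dr\,d\omega$ and $r \in (0,\rho_x(\omega))$, where the exit radius satisfies $\rho_x(\omega)\le d_\Omega$ by convexity. Interchanging the $\rho$‑ and $r$‑integrals on $\{0\le\rho\le r\le\rho_x(\omega)\}$ and bounding $\int_\rho^{\rho_x(\omega)} r^{d-1}\,dr \le d_\Omega^{d}/d$ gives
\[
|\Omega|\,|v(x)-v_\Omega| \le \frac{d_\Omega^{d}}{d}\int_{S^{d-1}}\int_0^{\rho_x(\omega)} |\nabla v(x+\rho\omega)|\,d\rho\,d\omega = \frac{d_\Omega^{d}}{d}\int_\Omega \frac{|\nabla v(y)|}{|x-y|^{d-1}}\,dy,
\]
the last step being the return to Cartesian coordinates after inserting $\rho^{d-1}/\rho^{d-1}$. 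H\"older's inequality with exponents $p,p^\prime$ together with the substitution $z=x-y$, which maps the $y$‑domain onto $\Omega_x\subseteq V$, then yields
\[
|v(x)-v_\Omega| \le \frac{d_\Omega^{d}}{d|\Omega|}\,\bigl\| |z|^{-(d-1)} \bigr\|_{L^{p^\prime}(V)}\,\|\nabla v\|_{L^p(\Omega)},
\]
which is finite because $p>d$ forces $(d-1)p^\prime<d$. Setting $d=3$ gives $D_{p,\infty} = \tfrac{d_\Omega^3}{3|\Omega|}\bigl\||z|^{-2}\bigr\|_{L^{p^\prime}(V)}$ after taking the essential supremum over $x$ and the density limit, and the identity $C_{p,\infty}=2^{1-1/p}\max(|\Omega|^{-1/p},D_{p,\infty})$ is then immediate from \Cref{mizuguchi3} with $q=\infty$.

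The main obstacle is the bookkeeping in the polar step: one must justify the Fubini interchange, check that convexity makes $\rho_x(\omega)$ well defined with $\rho_x(\omega)\le d_\Omega$ for every direction (so that $\nabla v$ is only evaluated at points of $\Omega$), and verify that $z=x-y$ indeed lands in $\Omega_x$, hence in $V$, with no loss beyond the stated inclusion. The density and continuity reduction, though routine, should be stated explicitly, since the pointwise inequality only makes sense for the continuous representative guaranteed by $p>d$.
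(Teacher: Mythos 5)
Your argument is correct and yields exactly the stated constant: the mean-oscillation identity, convexity plus the fundamental theorem of calculus along segments, polar coordinates with exit radius $\rho_x(\omega)\le d_\Omega$, Tonelli on $\{0\le \rho\le r\le \rho_x(\omega)\}$, and the bound $\int_\rho^{\rho_x(\omega)}r^{d-1}\,dr\le d_\Omega^d/d$ give the Riesz-potential estimate $|v(x)-v_\Omega|\le \frac{d_\Omega^d}{d|\Omega|}\int_\Omega |x-y|^{1-d}|\nabla v(y)|\,dy$, and H\"older with the substitution $z=x-y$ (so that the integration set is $\Omega_x\subseteq V$, with integrability since $(d-1)p^\prime<d$ for $p>d=3$) produces precisely $D_{p,\infty}=\frac{d_\Omega^3}{3|\Omega|}\bigl\||z|^{-2}\bigr\|_{L^{p^\prime}(V)}$, after which the formula for $C_{p,\infty}$ is \Cref{mizuguchi3} with $q=\infty$. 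Note that the paper offers no proof of \Cref{mizuguchi5} at all — it is quoted from the cited reference — and your derivation is the standard potential-representation argument (in the spirit of Gilbarg--Trudinger, Lemma 7.16) on which that reference is based, so there is nothing to reconcile beyond the routine density and continuous-representative step you already flag.
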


Our proof for the existence of solution depends on the size of the Sobolev inequality constant.

\begin{lemma}\label{sobolevestimate}
	For every $p \in (3,6)$ and $\Omega \subseteq \mathbb{R}^3$ bounded and convex, we have
	\[
		|\Omega|^{-\frac{1}{2}}\leq C_S(2) \leq 2^{\frac{1}{p}} C_{2,p}C_{p,\infty},
	\]
	where for $1 \leq p < q \leq \infty$, denote $C_{p,q}>0$ by a constant such that for every $u \in W^{1,p}(\Omega)$
	\[
		\lVert v \rVert_{L^q(\Omega)} \leq C_{p,q} \lVert v \rVert_{W^{1,p}(\Omega)}.
	\]
	If we further assume that $|\Omega| = Cd_\Omega^3$ for some $C>0$, then there exists $d_0>0$ such that for every $d_\Omega \leq d_0$,
	\[
		|\Omega|^{-\frac{1}{2}} \leq C_S(2) \leq 2^{\frac{3}{2}}|\Omega|^{-\frac{1}{2}}.
	\]
\end{lemma}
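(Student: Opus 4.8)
The plan is to get the lower bound from one test function and the upper bound from a two-step Sobolev chain $H^2(\Omega)\hookrightarrow W^{1,p}(\Omega)\hookrightarrow L^\infty(\Omega)$ whose constants are quantified by \Cref{mizuguchi3,mizuguchi4,mizuguchi5}, and then to read off the refined estimate by inserting the scaling $|\Omega|=Cd_\Omega^3$ into those explicit formulas. For the lower bound, test the inclusion $H^2(\Omega)\hookrightarrow L^\infty(\Omega)$ against the constant function $u\equiv 1$: then $\|u\|_{L^\infty(\Omega)}=1$ and, since every derivative vanishes, $\|u\|_{H^2(\Omega)}=\|u\|_{L^2(\Omega)}=|\Omega|^{1/2}$, so $C_S(2)\ge\|u\|_{L^\infty}/\|u\|_{H^2}=|\Omega|^{-1/2}$. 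This holds unconditionally and is reused for the second assertion.

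For the upper bound, fix $p\in(3,6)$ and write the embedding as $H^2(\Omega)\hookrightarrow W^{1,p}(\Omega)\hookrightarrow L^\infty(\Omega)$; the second arrow is bounded with norm $C_{p,\infty}$ since $p>3=d$ (\Cref{mizuguchi5}). For the first arrow, take $v\in H^2(\Omega)$; then $v\in W^{1,2}(\Omega)$, and because $\xi\mapsto|\xi|$ is Lipschitz the chain rule for Sobolev functions gives $|\nabla v|\in W^{1,2}(\Omega)$ with $\|\,|\nabla v|\,\|_{L^2}=\|\nabla v\|_{L^2}$ and $\|\nabla|\nabla v|\,\|_{L^2}\le\|D^2v\|_{L^2}$. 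Applying the $W^{1,2}(\Omega)\hookrightarrow L^p(\Omega)$ bound (constant $C_{2,p}$, \Cref{mizuguchi4}) separately to $v$ and to $|\nabla v|$ yields $\|v\|_{L^p}\le C_{2,p}\|v\|_{H^2}$ and $\|\nabla v\|_{L^p}\le C_{2,p}\|v\|_{H^2}$, so that $\|v\|_{W^{1,p}}^p=\|v\|_{L^p}^p+\|\nabla v\|_{L^p}^p\le 2\,C_{2,p}^p\,\|v\|_{H^2}^p$; taking $p$-th roots is precisely what produces the factor $2^{1/p}$. Composing with $C_{p,\infty}$ and passing to the supremum over the unit ball of $H^2(\Omega)$ gives $C_S(2)\le 2^{1/p}C_{2,p}C_{p,\infty}$, the first displayed inequality; the lower bound $|\Omega|^{-1/2}\le C_S(2)$ was already shown.

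For the refined estimate, fix any $p\in(3,6)$ and substitute $|\Omega|=Cd_\Omega^3$ into \eqref{sobconst1} and \eqref{sobconst3}. A power count in \eqref{sobconst1} shows $D_{2,p}=\Theta(d_\Omega^{(6-p)/(2p)})$, which vanishes as $d_\Omega\to0$ because $p<6$; and since $V=\bigcup_{x\in\Omega}\Omega_x=\Omega-\Omega\subseteq\overline{B(0,d_\Omega)}$, one has $\||x|^{-2}\|_{L^{p'}(V)}\le c\,d_\Omega^{3/p'-2}$ (the integral over $V$ being finite precisely because $2p'<3\Leftrightarrow p>3$), whence $D_{p,\infty}=O(d_\Omega^{1-3/p})\to0$. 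The competing quantities $|\Omega|^{1/p-1/2}$ and $|\Omega|^{-1/p}$ instead diverge as $d_\Omega\to 0$. Hence there is $d_0>0$ such that for all $d_\Omega\le d_0$ the maxima in \Cref{mizuguchi4,mizuguchi5} are attained at the $|\Omega|$-power terms, i.e. $C_{2,p}=2^{1/2}|\Omega|^{1/p-1/2}$ and $C_{p,\infty}=2^{1-1/p}|\Omega|^{-1/p}$. Substituting into $C_S(2)\le2^{1/p}C_{2,p}C_{p,\infty}$, the powers of $2$ combine to $2^{1/p+1/2+1-1/p}=2^{3/2}$ and the powers of $|\Omega|$ cancel to $|\Omega|^{-1/2}$, giving $C_S(2)\le2^{3/2}|\Omega|^{-1/2}$; combined with the lower bound this is the claim.

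The conceptual steps are standard elliptic/embedding facts; the delicate part is the bookkeeping. Obtaining exactly the factor $2^{1/p}$ on the first arrow hinges on the chain-rule identities for $|\nabla v|$ together with the convention $\|v\|_{W^{1,p}}^p=\|v\|_{L^p}^p+\|\nabla v\|_{L^p}^p$ used in \Cref{mizuguchi3} (which is itself the source of the ambient $2^{1-1/p}$ factors), and the refined estimate rests on the elementary but careful scaling analysis of $D_{2,p}$ and $D_{p,\infty}$ — in particular controlling $\||x|^{-2}\|_{L^{p'}}$ over the difference body $V$ and checking the sign of every exponent, which is what forces $p\in(3,6)$ and pins down the threshold $d_0$.
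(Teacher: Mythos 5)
Your proposal is correct and follows essentially the same route as the paper: the lower bound via constant test functions, the upper bound via the chain $H^2(\Omega)\hookrightarrow W^{1,p}(\Omega)\hookrightarrow L^\infty(\Omega)$ with the $2^{1/p}$ factor coming from combining the two terms of the $W^{1,p}$-norm, and the refined bound by the same scaling analysis of $D_{2,p}$ and $D_{p,\infty}$ under $|\Omega|=Cd_\Omega^3$. The only (immaterial) difference is bookkeeping for the gradient term: you apply $C_{2,p}$ to the scalar $|\nabla v|$ via a Kato-type chain-rule inequality, whereas the paper applies it componentwise to each $\partial_i v$ and uses $\ell^p\subseteq\ell^2$.
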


\begin{proof}
	Consider the embedding $H^2(\Omega)\hookrightarrow W^{1,p}(\Omega)\hookrightarrow L^\infty(\Omega)$, which is continuous when $p\in (3,6)$. From the first embedding, we obtain
	\[
	\begin{split}
		\lVert v \rVert_{W^{1,p}}^p &= \lVert v \rVert_{L^p}^p + \sum_{i=1}^d \lVert \partial_i v \rVert^p\\
		&\leq C_{2,p}^p \lVert v \rVert_{H^1}^p + C_{2,p}^p \sum_{i=1}^d \lVert \partial_i v \rVert_{H^1}^p \leq C_{2,p}^p \lVert v \rVert_{H^1}^p + C_{2,p}^p \Big(\sum_{i=1}^d \lVert \partial_i v \rVert_{H^1}^2\Big)^{p/2}\\
		&\leq C_{2,p}^p \lVert v \rVert_{H^1}^p + C_{2,p}^p \lVert v \rVert_{H^2}^p \leq 2 C_{2,p}^p \lVert v \rVert_{H^2}^p,
		\end{split}
	\]
	and from the second embedding,
	\[
		\lVert v \rVert_{L^\infty} \leq C_{p,\infty} \lVert v \rVert_{W^{1,p}}.
	\]
	Combining the two, we obtain the upper bound. For the lower bound, consider a family of constant functions defined on $\Omega$. Then
	\[
		\frac{\lVert c \rVert_{L^\infty}}{\lVert c \rVert_{H^2}} = \frac{\lVert c \rVert_{L^\infty}}{\lVert c \rVert_{L^2}} = |\Omega|^{-\frac{1}{2}} \leq C_S(2). 
	\]
	Now we assume $|\Omega| = C d_\Omega^3$ for some $C>0$ and give a sharp bound for $C_S(2)$ for $d_\Omega$ sufficiently small. Note that this hypothesis includes domains such as a ball $B(0,R)\subseteq \mathbb{R}^3$ or a cube $[-R,R]^3$ for $R>0$.
	
	Since $D_{2,p} = C(p)d_\Omega^{\frac{3}{p}-\frac{1}{2}}$ by \Cref{sobconst1} and $|\Omega|^{\frac{1}{p}-\frac{1}{2}} = (C d_\Omega^3)^{\frac{1}{p}-\frac{1}{2}}$, we have
	\begin{equation}\label{sobconst5}
		C_{2,p} = 2^{\frac{1}{2}} |\Omega|^{\frac{1}{p}-\frac{1}{2}},
	\end{equation}
	for all $d_\Omega \leq d_0(p)$ for some $d_0(p)>0$. On the other hand, we may translate the domain and assume $\frac{d_\Omega}{2} = \sup\limits_{x\in \Omega} |x|$. Then, $V \subseteq B(0,d_\Omega)$ and
	\[
		\left|\left| |x|^{-2} \right|\right|_{L^{p^\prime}(V)}^{p^\prime} \leq \left|\left| |x|^{-2} \right|\right|_{L^{p^\prime}(B(0,d_\Omega))}^{p^\prime} = 4\pi \int_0^{d_\Omega} r^{-2p^\prime+2} dr = \frac{4\pi}{-2p^\prime + 3} d_\Omega^{-2p^\prime + 3},
	\]
	and thus $D_{p,\infty} = C^\prime(p)d_\Omega^{-\frac{3}{p}+1}$ by \Cref{sobconst3}, and we have
	\begin{equation}\label{sobconst6}
		C_{p,\infty} = 2^{1-\frac{1}{p}}|\Omega|^{-\frac{1}{p}},
	\end{equation}
	for all $d_\Omega \leq d_0^\prime(p)$ for some $d_0^\prime(p)>0$. Combining \Cref{sobconst5} and \Cref{sobconst6}, we have
	\[
		|\Omega|^{-\frac{1}{2}} \leq C_S(2) \leq 2^{\frac{3}{2}}|\Omega|^{-\frac{1}{2}},
	\]
	for all $d_\Omega$ sufficiently small.
\end{proof}

\subsection{Estimates for $C_H$.}\label{Ch}

To do a numerical simulation, it is of interest to obtain an estimate for the elliptic regularity constant $C_H>0$. In applications, the tensor $\epsilon$ is usually assumed to be a scalar-valued function, in which case, an estimate for $C_H$ can be obtained by the Fourier transform: \[
	\hat{f}(\xi) = \int_{\mathbb{R}^d} f(x)e^{-ix\cdot \xi} d\xi \quad \text{and} \quad f(x) = (2\pi)^{-d}\int_{\mathbb{R}^d} \hat{f}(\xi)e^{i x \cdot \xi} d\xi.
\] For any $s\in \mathbb R$, define
\[
	H^s(\mathbb{R}^d) = \{f \in \mathscr{S}^\prime: \langle \xi \rangle^s \hat{f} \in L^2(\mathbb{R}^d)\},
\]
where $\langle \xi \rangle \coloneqq (1+|\xi|^2)^{\frac{1}{2}}$ and $\mathscr{S}^\prime$ is the space of tempered distributions.

\begin{lemma}\label{C_H0}
	Let \(L\) be as in \Cref{set-up}. Furthermore, suppose $\epsilon^{ij} = \epsilon(x)\delta_{ij}$ where $\delta_{ij}$ is the Kronecker delta function and $\epsilon \in W^{1,\infty}(\Omega)$ such that $\Real(\epsilon(x)) \geq \theta>0$ for all $x \in \Omega$. Then
\begin{equation}\label{ch.estimate}
C_H \leq \frac{\lambda_1^{-1} \langle \lambda_1^{\frac{1}{3}} \rangle^3 }{\theta}\left(1+ \frac{\lVert \kappa^2 \rVert_{L^\infty(\Omega)}+d^{\frac{1}{2}}\max\limits_{1 \leq i \leq d} \lVert \partial_i \epsilon \rVert_{L^\infty(\Omega)}\lambda_1^{\frac{1}{2}}}{\theta \lambda_1 - \mu}\right).
\end{equation}
\end{lemma}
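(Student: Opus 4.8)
The plan is to reduce the variable-coefficient equation $Lu = f$ to a Poisson problem for the Dirichlet Laplacian, and then to combine a sharp $H^2$-regularity estimate valid on convex domains with an energy estimate furnished by \Cref{h2,h3}. Given $f \in L^2(\Omega)$, set $u := L^{-1}f \in H^2(\Omega)\cap H^1_0(\Omega)$. Because $\epsilon^{ij}=\epsilon\,\delta_{ij}$ with $\epsilon \in W^{1,\infty}(\Omega)$ and $u \in H^2(\Omega)$, one may expand $-\nabla\cdot(\epsilon\nabla u) = -\epsilon\,\Delta u - \nabla\epsilon\cdot\nabla u$ as an identity in $L^2(\Omega)$, so $Lu=f$ becomes $-\Delta u = h$ with $h := \epsilon^{-1}\bigl(f + \nabla\epsilon\cdot\nabla u - \kappa^2 u\bigr)$. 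The hypothesis $\Real(\epsilon(x)) \geq \theta > 0$ gives $|\epsilon(x)| \geq \Real(\epsilon(x)) \geq \theta$, hence $|\epsilon(x)^{-1}| \leq \theta^{-1}$ for a.e.\ $x$, which legitimizes the pointwise division. It then remains to estimate $\|u\|_{H^2} = \|(-\Delta)^{-1}h\|_{H^2}$ in terms of $\|h\|_{L^2}$, and $\|h\|_{L^2}$ in terms of $\|f\|_{L^2}$.

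The key ingredient is the convex-domain estimate $\|v\|_{H^2(\Omega)} \leq \lambda_1^{-1}\langle\lambda_1^{1/3}\rangle^3\|\Delta v\|_{L^2(\Omega)}$ for every $v \in H^2(\Omega)\cap H^1_0(\Omega)$. Expanding $v = \sum_k c_k \phi_k$ in the Dirichlet eigenbasis of $-\Delta$ (the Fourier transform adapted to $\Omega$), one has $\|v\|_{L^2}^2 = \sum_k |c_k|^2$, $\|\nabla v\|_{L^2}^2 = \sum_k \lambda_k |c_k|^2$, and $\|\Delta v\|_{L^2}^2 = \sum_k \lambda_k^2 |c_k|^2$. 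Invoking the classical $H^2$ bound for the Dirichlet Laplacian on convex domains, $\|D^2 v\|_{L^2(\Omega)} \leq \|\Delta v\|_{L^2(\Omega)}$, and using $\lambda_k \geq \lambda_1$, one gets $\|v\|_{H^2}^2 \leq \sum_k (1 + \lambda_k + \lambda_k^2)|c_k|^2 \leq (\lambda_1^{-2} + \lambda_1^{-1} + 1)\|\Delta v\|_{L^2}^2$, and finally $\lambda_1^{-2} + \lambda_1^{-1} + 1 \leq \lambda_1^{-2}(1+\lambda_1^{2/3})^3 = \lambda_1^{-2}\langle\lambda_1^{1/3}\rangle^6$, which reduces to $1 \leq 3\lambda_1^{-1/3} + 3\lambda_1^{1/3}$, true by AM--GM. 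I expect this invocation of the sharp estimate $\|D^2 v\|_{L^2} \leq \|\Delta v\|_{L^2}$ to be the main obstacle: it is exactly the place where \Cref{h1} (convexity) is used, and its proof rests on a boundary integration-by-parts identity whose boundary term is signed only when $\partial\Omega$ is convex.

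For the remaining estimate, test the weak form of $Lu = f$ against $\overline{u}$ and take real parts to obtain $\int_\Omega \Real(\epsilon)|\nabla u|^2 + \int_\Omega \Real(\kappa^2)|u|^2 = \Real\int_\Omega f\,\overline{u}$. By \Cref{h2,h3} the left-hand side is at least $\theta\|\nabla u\|_{L^2}^2 - \mu\|u\|_{L^2}^2$; combining with the Poincar\'e inequality \eqref{poincare2} and $\mu/\theta < \lambda_1$ yields $\|\nabla u\|_{L^2} \leq \tfrac{\lambda_1^{1/2}}{\theta\lambda_1 - \mu}\|f\|_{L^2}$ and $\|u\|_{L^2} \leq \tfrac{1}{\theta\lambda_1 - \mu}\|f\|_{L^2}$. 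Then $\|h\|_{L^2} \leq \theta^{-1}\bigl(\|f\|_{L^2} + \|\nabla\epsilon\cdot\nabla u\|_{L^2} + \|\kappa^2 u\|_{L^2}\bigr)$ with $\|\nabla\epsilon\cdot\nabla u\|_{L^2} \leq d^{1/2}\max_i\|\partial_i\epsilon\|_{L^\infty}\|\nabla u\|_{L^2}$ and $\|\kappa^2 u\|_{L^2} \leq \|\kappa^2\|_{L^\infty}\|u\|_{L^2}$. Substituting the energy bounds gives $\|h\|_{L^2} \leq \theta^{-1}\bigl(1 + \tfrac{\|\kappa^2\|_{L^\infty} + d^{1/2}\max_i\|\partial_i\epsilon\|_{L^\infty}\lambda_1^{1/2}}{\theta\lambda_1 - \mu}\bigr)\|f\|_{L^2}$, and feeding this into $\|u\|_{H^2} \leq \lambda_1^{-1}\langle\lambda_1^{1/3}\rangle^3\|\Delta u\|_{L^2} = \lambda_1^{-1}\langle\lambda_1^{1/3}\rangle^3\|h\|_{L^2}$ produces exactly \eqref{ch.estimate}; taking the infimum over admissible constants identifies the bound with $C_H$.
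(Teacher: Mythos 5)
Your proposal is correct and arrives at \eqref{ch.estimate} with exactly the constants of the lemma, and its second half coincides step for step with the paper's argument: the rewriting $-\Delta u=\epsilon^{-1}\left(f+\nabla\epsilon\cdot\nabla u-\kappa^{2}u\right)$ with $|\epsilon^{-1}|\le\theta^{-1}$, the real-part energy identity tested against $\overline{u}$, and the Poincar\'e-based bounds $\lVert u\rVert_{L^{2}}\le(\theta\lambda_{1}-\mu)^{-1}\lVert f\rVert_{L^{2}}$ and $\lVert\nabla u\rVert_{L^{2}}\le\lambda_{1}^{1/2}(\theta\lambda_{1}-\mu)^{-1}\lVert f\rVert_{L^{2}}$ are precisely \eqref{C_H3}--\eqref{C_H5}. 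Where you genuinely diverge is the source of the factor $\lambda_{1}^{-1}\langle\lambda_{1}^{1/3}\rangle^{3}$: the paper proves $\lVert v\rVert_{H^{2}}\le\lambda_{1}^{-1}\langle\lambda_{1}^{1/3}\rangle^{3}\lVert\Delta v\rVert_{L^{2}}$ by extending the solution of the Laplace problem by zero to $\mathbb{R}^{d}$, splitting the Fourier integral at $|\xi|=c$, estimating high frequencies by $\langle c\rangle^{4}|c|^{-4}\lVert F\rVert_{L^{2}}^{2}$ and low frequencies via Poincar\'e, and optimizing at $c=\lambda_{1}^{1/3}$; you instead expand in the Dirichlet eigenbasis and invoke the classical convex-domain inequality $\sum_{i,j}\lVert\partial_{ij}v\rVert_{L^{2}}^{2}\le\lVert\Delta v\rVert_{L^{2}}^{2}$ (Kadlec/Grisvard), obtaining $\lVert v\rVert_{H^{2}}^{2}\le\left(1+\lambda_{1}^{-1}+\lambda_{1}^{-2}\right)\lVert\Delta v\rVert_{L^{2}}^{2}$ and then relaxing it, via your (correct) AM--GM computation, to the paper's constant. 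Your route makes the use of \Cref{h1} explicit, stays entirely on $\Omega$ and so avoids the paper's zero-extension step (which needs the extended function to satisfy $\widehat{\Delta u}=\widehat{F}$ on all of $\mathbb{R}^{d}$), and in fact yields a slightly sharper intermediate constant; its cost is the appeal to the convexity inequality as an external classical result rather than a short self-contained Fourier computation. One small point worth recording if you write this up: the identity $\lVert\Delta v\rVert_{L^{2}}^{2}=\sum_{k}\lambda_{k}^{2}|c_{k}|^{2}$ for $v\in H^{2}(\Omega)\cap H^{1}_{0}(\Omega)$ is legitimate because the boundary terms in Green's identity vanish ($v$ and each eigenfunction vanish on $\partial\Omega$), so there is no gap there.
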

\begin{proof}
	Given $F \in L^2(\Omega)$ and a unique weak solution $u \in H^1_0(\Omega)$ of the Laplace equation $-\Delta u = F$ in $\Omega$, we find \(C_1>0\) such that $\lVert u \rVert_{H^2(\Omega)} \leq C_1 \lVert F \rVert_{L^2(\Omega)}$. We use this energy estimate to handle the more complicated case.
	
	By the density argument, it suffices to assume $F \in C^\infty_c(\Omega)$. By an integration-by-parts argument, it can be shown that $u \in C^2_c(\Omega)$. Hence, we extend $u$ to a function in $C^2_c(\mathbb{R}^d)$, which we continue to call $u$, by defining $u(x) = 0$ for $x \in \mathbb{R}^d \setminus \mathrm{supp}(u).$ Then,
	\begin{equation*}
	\begin{aligned}
		\lVert u \rVert_{H^2(\Omega)}^2 &\leq \lVert u \rVert_{H^2(\mathbb{R}^d)}^2= \int_{\mathbb{R}^d} \langle \xi \rangle^4 |\hat{u}(\xi)|^2 d\xi  
		\\&= \int_{|\xi| \geq c} \langle \xi \rangle^4 |\hat{u}(\xi)|^2 d\xi + \int_{|\xi|<c} \langle \xi \rangle^4 |\hat{u}(\xi)|^2 d\xi =: I + II
	\end{aligned}
	\end{equation*}
	for some \(c>0\) to be fixed later. For the high frequencies,
    \[
	\begin{split}
		I &= \int_{|\xi| \geq c} \langle \xi \rangle^4 |\hat{u}(\xi)|^2 d\xi = \int_{|\xi|\geq c} \frac{\langle \xi \rangle^4}{|\xi|^4} |\widehat{\Delta u}|^2 d\xi= \int_{|\xi|\geq c} \frac{\langle \xi \rangle^4}{|\xi|^4} |\hat{F}(\xi)|^2 d\xi \leq \frac{\langle c \rangle^4}{|c|^4} \lVert F \rVert_{L^2(\Omega)}^2.
	\end{split}
	\]
	Combining the Poincar\'e inequality and the weak form of the Laplace equation, we have
	\[
		\lVert u \rVert_{L^2(\Omega)}^2 \leq \lambda_1^{-1} \lVert \nabla u \rVert_{L^2(\Omega)}^2 \leq \lambda_1^{-1}\lVert F \rVert_{L^2(\Omega)} \lVert u \rVert_{L^2(\Omega)},
	\]
	and therefore, for the low frequencies,
	\[
		II \leq \langle c \rangle^4 \lVert u \rVert_{L^2(\Omega)}^2  \leq \langle c \rangle^4 \lambda_1^{-2} \lVert F \rVert_{L^2(\Omega)}^2.
	\]
	Combining $I$ and $II$,
	\[
		\lVert u \rVert_{H^2(\Omega)} \leq \langle c \rangle^2 (|c|^{-4}+ \lambda_1^{-2})^{\frac{1}{2}} \lVert F \rVert_{L^2(\Omega)}.
	\]
	Noting that $c \mapsto \langle c \rangle^2 (|c|^{-4}+ \lambda_1^{-2})^{\frac{1}{2}}$ has a global minimum at $c = \lambda_1^{\frac{1}{3}}$, we fix that value of $c$ to obtain
	\begin{equation}\label{C_H2}
		\lVert u \rVert_{H^2(\Omega)} \leq C_1 \lVert F \rVert_{L^2(\Omega)} \quad \text{where} \quad C_1 \coloneqq \lambda_1^{-1} \langle \lambda_1^{\frac{1}{3}} \rangle^3 .
	\end{equation}
	Now we assume $u \in H^1_0(\Omega)$ is the unique weak solution of 
	\begin{equation}\label{C_H}
		-\nabla \cdot (\epsilon\nabla u) + \kappa^2 u = f \quad \text{in } \Omega.
	\end{equation}
	Setting $F \coloneqq f- \kappa^2 u \in L^2(\Omega)$, the product rule applied to \Cref{C_H} yields 
	\[
		-\Delta u = \epsilon(x)^{-1}(F + \nabla \epsilon \cdot \nabla u).    
	\]
	Noting that $|\epsilon(x)| \geq |\Real (\epsilon(x))| \geq \Real (\epsilon(x)) \geq \theta$, an immediate application of \Cref{C_H2} yields
	\begin{equation}\label{C_H3}
		\lVert u \rVert_{H^2(\Omega)} \leq \frac{C_1}{\theta} (\lVert F \rVert_{L^2(\Omega)} + \lVert \nabla \epsilon \cdot \nabla u\rVert_{L^2(\Omega)}).
	\end{equation}
	Taking the real part of the weak form of \Cref{C_H}, we have
	\[
		\int_{\Omega} \Real(\epsilon(x)) |\nabla u|^2 + \int_{\Omega} \Real(\kappa^2) |u|^2 = \Real \int_{\Omega} f\overline{u}.
	\]
	Recalling that $\Real(\kappa^2(x)) \geq - \mu$ for all $x \in \Omega$ and the uniform ellipticity,
	\begin{equation}\label{poincare3}
		\theta \int_{\Omega} |\nabla u|^2 + \int_{\Omega} \Real(\kappa^2) |u|^2 \leq \lVert f \rVert_{L^2(\Omega)} \lVert u \rVert_{L^2(\Omega)}.
	\end{equation}
	The Poincar\'e inequality yields $ (\theta \lambda_1 - \mu)\lVert u \rVert_{L^2(\Omega)}^2 \leq \theta \int_{\Omega} |\nabla u|^2 + \int_{\Omega} \Real(\kappa^2) |u|^2$, which gives
	\[
		\lVert u \rVert_{L^2(\Omega)} \leq \frac{\lVert f \rVert_{L^2(\Omega)}}{\theta \lambda_1 -\mu}.
	\]
	Another application of the Poincar\'e inequality to \Cref{poincare3} yields
 \begin{equation*}
     (\theta - \mu \lambda_1^{-1}) \int_\Omega |\nabla u|^2 \leq \theta \int_{\Omega} |\nabla u|^2 + \int_{\Omega} \Real(\kappa^2) |u|^2,
 \end{equation*}
which gives
	\begin{equation}\label{l22}
		\lVert \nabla u \rVert_{L^2(\Omega)} \leq \lambda_1^{\frac{1}{2}}\frac{\lVert f \rVert_{L^2(\Omega)}}{\theta \lambda_1 - \mu}.
	\end{equation}
	Hence,
	\begin{equation}\label{C_H4}
		\lVert F \rVert_{L^2(\Omega)} \leq \Big(1+\frac{\lVert \kappa^2 \rVert_{L^\infty(\Omega)}}{\theta \lambda_1 - \mu}\Big) \lVert f \rVert_{L^2(\Omega)}.
	\end{equation}
	On the other hand, the Cauchy-Schwarz inequality yields
	\begin{equation}\label{C_H5}
		\lVert \nabla \epsilon \cdot \nabla u\rVert_{L^2(\Omega)} \leq \sqrt{d} \max_{1 \leq i \leq d} \lVert \partial_i \epsilon\rVert_{L^\infty(\Omega)} \lVert \nabla u \rVert_{L^2(\Omega)} \leq \sqrt{d} \max_{1 \leq i \leq d} \lVert \partial_i \epsilon\rVert_{L^\infty(\Omega)}\lambda_1^{\frac{1}{2}}\frac{\lVert f \rVert_{L^2(\Omega)}}{\theta \lambda_1 - \mu}.
	\end{equation}
	By \Cref{C_H2,C_H3,C_H4,C_H5},
	\[
		\lVert u \rVert_{H^2(\Omega)} \leq \frac{\lambda_1^{-1} \langle \lambda_1^{\frac{1}{3}} \rangle^2 (1+ \lambda_1^{\frac{2}{3}})^{\frac{1}{2}}}{\theta}\Big(1+ \frac{\lVert \kappa^2 \rVert_{L^\infty(\Omega)}+d^{\frac{1}{2}}\max\limits_{1 \leq i \leq d} \lVert \partial_i \epsilon \rVert_{L^\infty(\Omega)}\lambda_1^{\frac{1}{2}}}{\theta \lambda_1 - \mu}\Big) \lVert f \rVert_{L^2(\Omega)}.
	\]
\end{proof}

In general when $\epsilon$ is a tensor, a direct application of Fourier transform seems infeasible. Instead, we closely follow the argument of \cite[Section 6.3, Theorem 4]{evans2010partial} to obtain an estimate on $C_H$. An emphasis here is that we keep track of the implicit constants.

\begin{lemma}\label{C_H00}
	Let \(L\) be as given in \Cref{set-up}. Then
\begin{equation}\label{ch.estimate2}
C_H \leq N(\Omega)\Big(\Big(\frac{1+\lambda_1}{\theta \lambda_1 - \mu} \Big)^2 + d C_0\Big)^{\frac{1}{2}},     
\end{equation}
where $N(\Omega) \in \mathbb{N}$ and $C_0$ is defined in \Cref{C_0}.
\end{lemma}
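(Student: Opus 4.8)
The plan is to follow the proof of the $H^2$ elliptic regularity estimate in \cite[Section 6.3, Theorem 4]{evans2010partial}, but carrying every implicit constant explicitly through each step rather than absorbing it into an unnamed $C$. Throughout, $u \in H^1_0(\Omega)$ denotes the unique weak solution of $Lu = f$ with zero boundary data guaranteed by \Cref{laxmilgram} under \Cref{h3}.

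First I would record the global energy bound. Testing the weak form against $u$ and taking real parts, the uniform ellipticity \eqref{ellipticity}, the lower bound \eqref{kappa_bound} on $\Real(\kappa^2)$, and the Poincar\'e inequality \eqref{poincare2} yield, exactly as in the derivation of \eqref{l22},
\[
\lVert \nabla u \rVert_{L^2(\Omega)} \leq \lambda_1^{\frac{1}{2}}\frac{\lVert f \rVert_{L^2(\Omega)}}{\theta\lambda_1 - \mu}, \qquad \lVert u \rVert_{L^2(\Omega)} \leq \frac{\lVert f \rVert_{L^2(\Omega)}}{\theta\lambda_1 - \mu},
\]
so that $\lVert u \rVert_{H^1(\Omega)} \leq \frac{(1+\lambda_1)^{1/2}}{\theta\lambda_1 - \mu}\lVert f \rVert_{L^2(\Omega)}$. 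This is the source of the first term inside the parenthesis of \eqref{ch.estimate2}.

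Next come the local second-derivative estimates. By \Cref{h1}, $\overline{\Omega}$ admits a finite cover by balls $\{B_k\}_{k=1}^{N(\Omega)}$ — interior balls compactly contained in $\Omega$, and boundary balls on which $\partial\Omega$ is straightened by a $C^2$ diffeomorphism — with a subordinate partition of unity. On an interior ball I would run the difference-quotient argument of Evans to bound $\lVert D^2 u \rVert_{L^2}$ on a slightly smaller ball by a constant (depending only on $\theta$, $\lVert \epsilon \rVert_{W^{1,\infty}}$, $\lVert \kappa^2 \rVert_{L^\infty}$, and the cutoff) times $\lVert f \rVert_{L^2} + \lVert u \rVert_{H^1}$. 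On a boundary ball, after flattening, one takes difference quotients only in the directions tangent to the flattened boundary — admissible since the test functions still vanish there — obtaining control of every second derivative except $\partial_{dd}u$; the latter is then recovered algebraically from the transformed equation using ellipticity of the transformed principal part. Squaring the local bounds, summing against the partition of unity, and substituting the $H^1$ bound from the previous step gives \eqref{ch.estimate2}, with $N(\Omega)$ the cardinality of the cover and $C_0$ the aggregate of the local constants, as defined in \Cref{C_0}.

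The main obstacle is the boundary term: keeping explicit track of the constants produced by the boundary-flattening diffeomorphism (the $C^2$ norms of the chart and its inverse, which depend on $\Omega$), by the tangential difference-quotient estimates, and by inverting the transformed principal part to isolate $\partial_{dd}u$. This is precisely what is bundled into $C_0$; convexity and $C^2$-regularity of $\partial\Omega$ from \Cref{h1} ensure that the cover is finite and the chart constants are uniformly bounded, so $N(\Omega)$ and $C_0$ are finite. The interior estimate and the passage from local to global norms via the partition of unity are routine once the energy bound is available.
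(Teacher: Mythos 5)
Your proposal is correct and follows essentially the same route as the paper: the energy/Poincar\'e bound giving $\lVert u \rVert_{H^1} \lesssim \frac{1+\lambda_1}{\theta\lambda_1-\mu}\lVert f \rVert_{L^2}$, the Evans-style difference-quotient argument with explicit constants yielding the interior second-derivative bound with $C_0$, and a boundary-flattening plus finite-cover step producing $N(\Omega)$. The only cosmetic differences are that your $H^1$ bound carries $(1+\lambda_1)^{1/2}$ (slightly sharper than the paper's, so the stated estimate still follows) and that the paper keeps $C_0$ as the purely interior constant of \Cref{C_0} while absorbing the chart-dependent boundary constants into $N(\Omega)$, exactly as you anticipate in your final paragraph.
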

\begin{remark}\label{ch_diameter}
Since $\lambda_1 \geq \frac{\pi^2}{d_\Omega^2}$ for convex $\Omega$, the RHS of \Cref{ch.estimate,ch.estimate2} converges to $\frac{C(\Omega)}{\theta}$ as $d_\Omega \rightarrow 0$.
\end{remark}
\begin{remark}
Since the estimate of \Cref{ch.estimate2} depends on the number of finitely many open balls covering $\Omega$, the geometry of $\partial \Omega$ plays a big role in the computation of $N(\Omega)$. This will be pursued in future research. 
\end{remark}
\begin{proof}
	Let $V \Subset W \Subset \Omega$. Let $\zeta \in C^\infty_c(\Omega)$ such that $0\leq \zeta \leq 1$ and $\zeta = 1$ on $V$, and $\mathrm{supp}(\zeta)\subseteq W$. Set $F = f- \kappa^2 u \in L^2(\Omega)$. Consider the weak form of 
    \begin{equation}
		-\nabla \cdot (\epsilon\nabla u) + \kappa^2 u = f \quad \text{in } \Omega
	\end{equation}
	applied to the test function $\phi = -D_k^{-h} \zeta^2 D_k^h u$, where 
	\[D_k^h u(x)= \frac{u(x+he_k) - u(x)}{h}\] and $\{e_k\}_{k=1}^d$ forms the standard basis of $\mathbb{R}^d$. Using integration by parts and the product rule of discrete derivatives,
	\begin{equation*}
	\begin{aligned}
		\int_\Omega \epsilon^{ij} \partial_j u \overline{\partial_i \phi} &= \int_\Omega D_k^h (\epsilon^{ij}\partial_j u) \overline{\partial_i (\zeta^2 D_k^h u)} \\
		&= \int_{\Omega} (\epsilon^{ij,h} D_k^h \partial_j u + D_k^h \epsilon^{ij} \partial_j u) (\overline{2\zeta \partial_i \zeta D_k^h u} + \overline{\zeta^2 D_k^h \partial_i u})\\
		&= \int_\Omega \zeta^2 \epsilon^{ij,h} D_k^h \partial_j u \overline{D_k^h \partial_i u} + R,
	\end{aligned}
	\end{equation*}
	where $\epsilon^{ij,h}(x) \coloneqq \epsilon^{ij}(x+he_k)$. By uniform ellipticity,
	\[
		\Real\int_\Omega \zeta^2 \epsilon^{ij,h} D_k^h \partial_j u \overline{D_k^h \partial_i u} \geq \theta \int_\Omega \zeta^2 |D_k^h \nabla u|^2.
	\]
	The other three products are estimated above by the Cauchy-Schwarz inequality:
	\begin{equation}\label{R}
	\begin{split}
		R &\leq \left|\int_\Omega 2 \zeta \partial_i \zeta \epsilon^{ij,h} D_k^h \partial_j u \overline{D_k^h u}\right| + \left| \int_\Omega 2\zeta \partial_i \zeta D_k^h \epsilon^{ij} \partial_j u \overline{D_k^h u}\right| + \left|\int_\Omega D_k^h \epsilon^{ij} \zeta^2 \partial_j u \overline{D_k^h \partial_i u} \right| \\
		&\leq 2 \lVert \nabla \zeta \rVert_{L^\infty(\Omega)} \lVert \epsilon \rVert_{W^{1,\infty}(\Omega)}\Big( \int_\Omega \zeta |D_k^h \nabla u||D_k^h u| + \int_\Omega \zeta |\nabla u||D_k^h u|\Big)+ \lVert \epsilon \rVert_{W^{1,\infty}(\Omega)} \int_\Omega \zeta |\partial_j u| |D_k^h \partial_i u|.
	\end{split}
	\end{equation}
	Recalling the following variant of Cauchy-Schwarz inequality
	\[
		ab \leq \frac{a^2}{2\delta} + \frac{\delta b^2}{2},
	\]
	for $a,b \geq 0$ and $\delta>0$ and the following control of discrete derivatives with respect to the continuous derivatives for sufficiently small $|h|>0$,
	\begin{equation}\label{discretederiv}
		\lVert D_k^h \phi\rVert_{L^2(V)} \leq \lVert \partial_k \phi \rVert_{L^2(\Omega)},\:\forall \phi \in H^1(\Omega),\: V \Subset \Omega,
	\end{equation}
	\Cref{R} is bounded above by
	\[
		\leq C_1\delta \int_\Omega \zeta^2 |D_k^h \nabla u|^2 + C_2\int_{\Omega} |\nabla u|^2
	\]
	where
	\[
	\begin{split}
		C_1 &\coloneqq \lVert \epsilon \rVert_{W^{1,\infty}(\Omega)}\Big(\lVert \nabla \zeta \rVert_{L^\infty(\Omega)} + \frac{1}{2}\Big) \quad \text{and} \quad C_2 \coloneqq \lVert \epsilon \rVert_{W^{1,\infty}(\Omega)}\Big(2\lVert \nabla \zeta \rVert_{L^\infty(\Omega)} +\frac{1+2\lVert \nabla \zeta \rVert_{L^\infty(\Omega)}}{2\delta}\Big).
	\end{split}
	\]
	Choosing $\delta = \frac{\theta}{2C_1}$, we use the triangle inequality to obtain
	\begin{equation}\label{lowerbound2}
		\Real\int_\Omega (\epsilon \nabla u) \cdot \overline{\nabla u} \geq \frac{\theta}{2}\int_\Omega \zeta^2 |D_k^h \nabla u|^2 - C_2 \int_\Omega |\nabla u|^2. 
	\end{equation}
	On the other hand, we estimate the right-hand side of the weak form:
	\[
		\left|\int_\Omega F \overline{\phi}\right| \leq \frac{1}{2\delta} \int_\Omega |F|^2 + \frac{\delta}{2}\int_\Omega |\phi|^2,
	\]
	where the first term is estimated above as in \Cref{C_H4}.
	\begin{equation*}
	\begin{aligned}
		\int_\Omega |\phi|^2 &\leq \int_\Omega |\partial_k (\zeta^2 D_k^h u)|^2 \\
		&\leq 2 \int_\Omega |2\zeta \partial_k \zeta D_k^h u|^2 + 2 \int_\Omega \zeta^2 |D_k^h \partial_k u|^2\\
		&\leq 8 \lVert \nabla \zeta \rVert_{L^\infty(\Omega)}^2 \int_\Omega |\nabla u|^2 + 2 \int_\Omega \zeta^2 |D_k^h \nabla u|^2,
	\end{aligned}
	\end{equation*}
	where the last inequality is by \Cref{discretederiv}. Let $\delta = \frac{\theta}{4}$. Then,
	
	\begin{equation}\label{lowerbound3}
		\left| \int_\Omega F \overline{\phi}\right| \leq \frac{2}{\theta} \Big(1+\frac{\lVert \kappa^2 \rVert_{L^\infty(\Omega)}}{\theta \lambda_1  - \mu} \Big)^2 \int_\Omega |f|^2 + \theta \lVert \nabla \zeta \rVert_{L^\infty(\Omega)}^2 \int_\Omega |\nabla u|^2 + \frac{\theta}{4} \int_\Omega \zeta^2 |D_k^h \nabla u|^2.
	\end{equation}
	Combining \Cref{lowerbound3} and \Cref{lowerbound2},
	\[
		\frac{\theta}{4} \int_V |D_k^h \nabla u|^2 \leq \frac{\theta}{4} \int_\Omega \zeta^2 |D_k^h \nabla u|^2 \leq \frac{2}{\theta}\Big(1+ \frac{\lVert \kappa^2 \rVert_{L^\infty(\Omega)}}{\theta \lambda_1 - \mu} \Big)^2 \int_\Omega |f|^2 + (C_2 + \theta \lVert \nabla \zeta \rVert_{L^\infty(\Omega)}^2) \int_\Omega |\nabla u|^2,
	\]
	and by \Cref{l22},
	\begin{equation}
	\begin{split}
	\label{C_0}
		\int_V |D_k^h \nabla u|^2 &\leq C_0 \int_\Omega |f|^2, \\
		C_0 &= \frac{4}{\theta}\bigg(\frac{2}{\theta}\Big(1+ \frac{\lVert \kappa^2 \rVert_{L^\infty(\Omega)}}{\theta \lambda_1 - \mu} \Big)^2+ \frac{\lambda_1 (C_2 + \theta \lVert \nabla \zeta \rVert_{L^\infty(\Omega)}^2)}{(\theta \lambda_1 - \mu)^2} \bigg).
	\end{split}
	\end{equation}
	By \cite[Section 5.8.2, Theorem 3]{evans2010partial}, this shows $\partial_k \nabla u \in L^2(V,\mathbb{C}^d)$ for all $1 \leq k \leq d$ with the same bound on the $L^2$-norm. Hence,
	\begin{equation}\label{h2homogeneous}
		\sum_{1\leq i,j \leq d} \lVert \partial_{ij} u \rVert_{L^2(V)}^2 \leq d C_0 \int_\Omega |f|^2.
	\end{equation}
	By the Lax-Milgram Theorem, we have
 \begin{equation}\label{LM3}
		\lVert u \rVert_{H^1(\Omega)} \leq \frac{1+\lambda_1}{\theta \lambda_1 - \mu} \lVert f \rVert_{L^2(\Omega)}.
	\end{equation}
	Combining \Cref{h2homogeneous} with \Cref{LM3}, we obtain
	\begin{equation}\label{h2inhomogeneous}
		\lVert u \rVert_{H^2(V)} \leq \Big(\Big(\frac{1+\lambda_1}{\theta \lambda_1 - \mu} \Big)^2 + d C_0\Big)^{\frac{1}{2}}\lVert f \rVert_{L^2(\Omega)}.
	\end{equation}
	Since $\Omega$ is bounded, $\{x \in \Omega: \inf_{y \in \partial\Omega}|x-y| \geq \delta\}$ can be covered by finitely many open sets for every $\delta>0$. Given any point $y \in \partial \Omega$, there exists a diffeomorphism that takes a small neighborhood of $y$ (in $\overline{\Omega}$) into a neighborhood in the half-plane $\mathbb{R}^d_+ \coloneqq \mathbb{R}^{d-1}\times [0,\infty)$ where $y$ is identified with $0 \in \mathbb{R}^d_+$. Via this diffeomorphism, one can show that the $H^2$-norm of $u$ in the neighborhood of $y$ obeys an esmate similar to \Cref{h2inhomogeneous}. Hence, there exists $N = N(\Omega) \in \mathbb{N}$ such that
	\[
		\lVert u \rVert_{H^2(\Omega)} \leq N(\Omega)\Big(\Big(\frac{1+\lambda_1}{\theta \lambda_1 - \mu} \Big)^2 + d C_0\Big)^{\frac{1}{2}} \lVert f \rVert_{L^2(\Omega)}.
	\]
\end{proof}

\section{Failure of Uniqueness}\label{appendixB}
In this appendix, we demonstrate that the assumption made in proving the existence of unique solutions were reasonable. In particular, if \cref{h2,h3} or the smallness assumptions on \((f,g)\in L^2(\Omega) \times H^{3/2}(\Omega)\) are violated, then there can be multiple small solutions of the nPBE.

Let us first examine what can happen if we allow \cref{h2,h3} to be violated. For the case where \(\epsilon\) and \(\kappa^2\) are scalar-valued and \(f\) and \(g\) set to zero functions, \cref{npb} simplifies to 

\begin{equation}\label{scalar_NPBE}
	\begin{aligned}
		-\Delta u + (\eta - \lambda_1) \sinh(u) &= 0 \quad \text{ in } \Omega, \\
		u & = 0 \quad \text{ on } \partial \Omega,
	\end{aligned}
\end{equation}
where \(\eta = \lambda_1 + \kappa^2/\epsilon\in \mathbb{R}\). The function \(u\equiv 0\) is always a trivial solution for the above PDE. For \Cref{scalar_NPBE}, \Cref{h2,h3} are satisfied if and only if \(\eta\) is greater than zero. The parameter \(\eta\) is the smallest eigenvalue of the \(-\Delta  + (\eta-\lambda_1)\), so this linear operator is non-invertible when \(\eta = 0\). The following result proved by Crandall and Rabinowitz in \cite{crandall1971bifurcation} can be used to show that the zero solution undergoes a bifurcation at \(\eta=0\):

\begin{theorem}\label{CR_theorem}
	Let \(X\) and \(Y\) be Banach spaces and assume 
	\begin{enumerate}[label= (\roman*)]
		\item \(F \in C^2(X\times \mathbb R, Y)\),
		\item \(F(0,\eta) = 0\) for all \(\eta \in \mathbb R\),
		\item \(\dim{N(D_xF(0,0))} = \mathrm{codim}{R(D_x F(0,0) )} = 1\), and 
		\item \(D^2_{x\eta} F(0,0) \hat{v}_0 \notin R(D_xF(0,0))\) where \(\hat{v}_0\neq 0\)  is in  \(N(D_xF(0,0))\). \label{item_4}
	\end{enumerate}
	Then there is a nontrivial continuously differentiable curve
	\begin{equation}
		\{(x(s),\eta(s)) \mid s\in (-\delta, \delta) \}
	\end{equation}
	such that \((x(0), \eta(0)) = (0,0)\), \(x'(0) = \hat{v}_0\) and 
	\begin{equation}
		F(x(s), \eta(s)) = 0 \quad \text{ for } \quad s\in(-\delta, \delta).
	\end{equation}
\end{theorem}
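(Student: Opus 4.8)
The plan is to prove this via a Lyapunov--Schmidt reduction followed by two applications of the implicit function theorem; this is essentially the original argument of \cite{crandall1971bifurcation}, which I sketch here. Write $L := D_x F(0,0)$. Using hypothesis (iii), split $X = N \oplus Z$ with $N = \mathrm{span}\{\hat v_0\}$ and $Z$ a closed complement, and $Y = R \oplus C$ with $R = R(L)$ and $\dim C = 1$; let $Q : Y \to R$ denote the associated continuous projection. Hypothesis (iii) guarantees that $L|_Z : Z \to R$ is a Banach-space isomorphism, which is the structural fact driving the reduction.

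Exploiting $F(0,\eta) \equiv 0$, I would seek nontrivial solutions in the scaled form $x = s(\hat v_0 + w)$ with $s \in \mathbb{R}$ near $0$ and $w \in Z$ near $0$, and introduce the reduced map
\begin{equation*}
G(s,w,\eta) := \int_0^1 D_x F\big(ts(\hat v_0 + w),\eta\big)(\hat v_0 + w)\, dt,
\end{equation*}
so that $F\big(s(\hat v_0+w),\eta\big) = s\, G(s,w,\eta)$ for all $s$. Since $F \in C^2$, the integrand is $C^1$ jointly in $(t,s,w,\eta)$, so one may differentiate under the integral sign and conclude $G \in C^1$ near $(0,0,0)$; moreover $G(0,0,0) = L\hat v_0 = 0$. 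Thus $F(x(s),\eta(s)) = 0$ (for $s\neq 0$, and trivially for $s=0$) as soon as $G(s,w,\eta)=0$, reducing the problem to solving $G=0$.

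I would then solve $G = 0$ in two stages. Applying $Q$: one checks $\partial_w\big(QG\big)(0,0,0) = Q L|_Z$ is an isomorphism of $Z$ onto $R$, so the implicit function theorem yields a $C^1$ map $(s,\eta)\mapsto w(s,\eta)$ near $(0,0)$ with $w(0,0)=0$ and $QG(s,w(s,\eta),\eta)=0$. It remains to solve the scalar equation $\Phi(s,\eta) := (I-Q)G(s,w(s,\eta),\eta) = 0$ valued in the one-dimensional space $C$. Here $\Phi(0,0)=0$, and since $\partial_w G(0,0,0)$ has range in $R$ the chain rule gives $\partial_\eta\Phi(0,0) = (I-Q)\,\partial_\eta G(0,0,0) = (I-Q)\,D^2_{x\eta}F(0,0)\hat v_0$, which is nonzero by hypothesis \ref{item_4}. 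A final application of the implicit function theorem produces a $C^1$ function $s\mapsto\eta(s)$ with $\eta(0)=0$ and $\Phi(s,\eta(s))=0$. Setting $x(s) := s\big(\hat v_0 + w(s,\eta(s))\big)$ then yields the asserted curve: $x(0)=0$, $x'(0) = \hat v_0 + w(0,0) = \hat v_0$, and $F(x(s),\eta(s))=0$ for $|s|$ small.

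The main technical obstacle is verifying that $G$ is genuinely $C^1$ up to and including $s=0$ — precisely where the $C^2$ hypothesis on $F$ is consumed; the Hadamard-type integral representation above is exactly what legitimizes the division by $s$ and the resulting smoothness, and everything afterward is routine bookkeeping with projections and the implicit function theorem. (Alternatively, one may simply cite \cite{crandall1971bifurcation} and omit the reduction altogether.)
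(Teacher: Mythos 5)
Your proposal is correct: the paper itself offers no proof of this statement --- it is quoted verbatim from Crandall--Rabinowitz and justified by the citation to \cite{crandall1971bifurcation} alone --- and your Lyapunov--Schmidt sketch is precisely the classical argument of that reference, with the $C^2$ hypothesis consumed exactly where you say, in making $G(s,w,\eta)=\int_0^1 D_xF\bigl(ts(\hat v_0+w),\eta\bigr)(\hat v_0+w)\,dt$ a $C^1$ map so that the factorization $F\bigl(s(\hat v_0+w),\eta\bigr)=sG(s,w,\eta)$ can be exploited. The only (cosmetic) deviation from the original is that you solve $G=0$ in two stages --- first $QG=0$ for $w(s,\eta)$ via the isomorphism $L|_Z\colon Z\to R(L)$, then the one-dimensional equation $(I-Q)G=0$ for $\eta(s)$ using hypothesis (iv) --- whereas Crandall--Rabinowitz apply a single implicit function theorem in the joint variable $(w,\eta)$, whose linearization $(\hat w,\hat\eta)\mapsto L\hat w+\hat\eta\,D^2_{x\eta}F(0,0)\hat v_0$ is an isomorphism of $Z\times\mathbb R$ onto $Y$ by (iii) and (iv); both routes are equally rigorous, and your computation of $\partial_\eta\Phi(0,0)=(I-Q)D^2_{x\eta}F(0,0)\hat v_0\neq 0$ is the correct transversality check. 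The only points left tacit, both standard, are that $R(L)$ is closed (automatic for a bounded operator with finite-codimensional range, which makes the projection $Q$ continuous and $L|_Z$ boundedly invertible) and that the resulting curve is nontrivial, which follows since $x(s)=s\bigl(\hat v_0+w(s,\eta(s))\bigr)\neq 0$ for small $s\neq 0$.
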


Here \(D_x\) and \(D_\eta\) represent the Frechet derivatives of \(F\) with respect to the \(X\) and \(\mathbb R\) components, respectively. Note that \(D_\eta F(x,\eta) \in \mathcal L (\mathbb R, Y)\), the set of linear operators from \(\mathbb R\) into \(Y\). An element \(A \in \mathcal L (\mathbb R,Y)\) can be uniquely associated with an element \(y\in Y\) by setting \(y\in A(1)\). Thus \(D_{x\eta}^2F(x,\eta)\) can be associated with an element of \(\mathcal L(X,Y)\), which is how the map \(D^2_{x\eta}F(0,0)\) is being viewed in \Cref{item_4}. Applying \Cref{CR_theorem} gives the existence of non-unique small solutions.

\begin{theorem}\label{nonunique_solutions}
	Let \(c > 0\). Then there is \(\eta^* < 0\) such that for any \(\eta \in [\eta^*, 0)\) there is a non-trivial solution \(u\) of \Cref{scalar_NPBE} such that \(\|u\|_{H^2} < c\).
\end{theorem}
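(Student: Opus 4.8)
The plan is to realize $u\equiv 0$ as a bifurcation point of \eqref{scalar_NPBE} and to apply the Crandall--Rabinowitz theorem (\Cref{CR_theorem}) with $\eta$ itself serving as the real bifurcation parameter. I would work in $X:=H^2(\Omega)\cap H^1_0(\Omega)$ and $Y:=L^2(\Omega)$, and put
\[
F(u,\eta):=-\Delta u+(\eta-\lambda_1)\sinh u,\qquad F:X\times\mathbb R\to Y,
\]
the left-hand side of \eqref{scalar_NPBE}; let $\phi_1\in X$ be the $L^2$-normalized principal Dirichlet eigenfunction, $-\Delta\phi_1=\lambda_1\phi_1$. A zero $F(\cdot,\eta)=0$ is precisely a strong solution of \eqref{scalar_NPBE}, and $F(0,\eta)=0$ for every $\eta$ because $\sinh 0=0$, so the trivial branch is present; the goal is to produce a second branch of small nontrivial solutions issuing from $(0,0)$.

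First I would verify the four hypotheses of \Cref{CR_theorem}. For (i): since $d\le 3$, $H^2(\Omega)$ is a Banach algebra embedded in $C(\overline\Omega)$, so $\sinh u=\sum_{k\ge 0}u^{2k+1}/(2k+1)!$ converges in $H^2(\Omega)$, locally uniformly in $u$, whence $u\mapsto\sinh u$ is real-analytic $H^2(\Omega)\to H^2(\Omega)$; composing with $H^2(\Omega)\hookrightarrow L^2(\Omega)$ and multiplying by the affine factor $\eta-\lambda_1$ makes $F$ real-analytic, hence $C^2$. Part (ii) is the observation above. For (iii): $D_uF(0,0)h=-\Delta h-\lambda_1 h$, and since $\lambda_1$ is simple (the ordering $0<\lambda_1<\lambda_2\le\cdots$), $N(D_uF(0,0))=\operatorname{span}\{\phi_1\}$ is one-dimensional; and since $-\Delta-\lambda_1$, viewed as a self-adjoint operator on $L^2(\Omega)$ with domain $X$, has closed range, $R(D_uF(0,0))=\{g\in L^2(\Omega):\langle g,\phi_1\rangle=0\}$ has codimension one. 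For (iv): $D_\eta F(u,\eta)=\sinh u\in Y$, so $D^2_{u\eta}F(0,0)h=\cosh(0)\,h=h$; with $\hat v_0:=\phi_1$ one gets $D^2_{u\eta}F(0,0)\hat v_0=\phi_1$, and $\langle\phi_1,\phi_1\rangle=1\ne 0$ gives $\phi_1\notin R(D_uF(0,0))$.

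\Cref{CR_theorem} then yields $\delta>0$ and a $C^1$ curve $s\mapsto(u(s),\eta(s))$, $s\in(-\delta,\delta)$, with $(u(0),\eta(0))=(0,0)$, $u'(0)=\phi_1$, and $F(u(s),\eta(s))=0$; hence each $u(s)\in X$ is a strong solution of \eqref{scalar_NPBE} with parameter $\eta(s)$ and $u(s)=s\phi_1+o(s)\ne 0$ for $0<|s|<\delta$. Since $s\mapsto u(s)$ is continuous into $H^2(\Omega)$ and $u(0)=0$, after shrinking $\delta$ I may assume $\|u(s)\|_{H^2}<c$ for all $|s|<\delta$. Finally I would identify the parameter range: the nonlinearity is odd, so the involution $(u,\eta)\mapsto(-u,\eta)$ preserves the solution set and must therefore fix the curve, forcing $\eta$ to be even in $s$ to leading order, in particular $\eta'(0)=0$. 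A one-step Lyapunov--Schmidt reduction — write $u=s\phi_1+w$ with $w\perp_{L^2}\phi_1$, note that the $\phi_1^\perp$-equation has cubic leading forcing so $w=O(s^3)$, and project the PDE onto $\phi_1$ — identifies $\tfrac12\eta''(0)$ as an explicit nonzero multiple of $\lambda_1\int_\Omega\phi_1^4$, hence of fixed sign. Therefore $s\mapsto\eta(s)$ is strictly monotone on a one-sided interval, its image is a half-open interval with endpoint $0$, and choosing $\eta^*$ in the interior of this image gives, for every $\eta\in[\eta^*,0)$, some $s$ with $u(s)\ne 0$, $\|u(s)\|_{H^2}<c$, and $F(u(s),\eta)=0$.

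The scheme is standard; the work is concentrated in two places. The first is confirming that $F$ is $C^2$ between the chosen spaces, i.e.\ that the Nemytskii operator $u\mapsto\sinh u$ has the required smoothness on $H^2(\Omega)$ — this is exactly where $d\le 3$ (Banach-algebra property, embedding into $C(\overline\Omega)$) is used, and it is worth doing carefully. The second, and the real crux, is the direction-of-bifurcation computation: only the sign of $\eta''(0)$ — equivalently the sign of $\lambda_1\int_\Omega\phi_1^4$ against the relevant normalization — determines on which side of $\eta=0$ the nontrivial small solutions lie, and hence whether the conclusion holds with $\eta<0$ as stated. The remaining ingredients (simplicity of $\lambda_1$, self-adjointness and closed range of $-\Delta-\lambda_1$, the transversality identity, and the $H^2$ size bound) are routine.
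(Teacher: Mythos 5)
Your setup is the same as the paper's: the same map $F(u,\eta)=-\Delta u+(\eta-\lambda_1)\sinh u$ on $X=H^2(\Omega)\cap H^1_0(\Omega)$, $Y=L^2(\Omega)$, the same verification of hypotheses (i)--(iv) of \Cref{CR_theorem} (your characterization of the range via self-adjointness is in fact a cleaner route to (iv) than the paper's generalized-eigenfunction remark), and the same endgame: show $\eta'(0)=0$, $\eta''(0)\neq 0$, and invert $s\mapsto\eta(s)$ on a one-sided interval. The genuine gap is the one you yourself flag at the end: you never determine the \emph{sign} of $\eta''(0)$, only that it is a nonzero multiple of $\lambda_1\int_\Omega\phi_1^4$. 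But the theorem is precisely the assertion that the small nontrivial solutions lie on the side $\eta<0$; a nonzero $\eta''(0)$ of undetermined sign only yields a half-open parameter interval on one \emph{unspecified} side of $0$, so your concluding sentence (``for every $\eta\in[\eta^*,0)$ \dots'') is unsupported as written. The paper closes this step by asserting $\eta'(0)=0$ and $\eta''(0)<0$, citing Kielh\"ofer's formulas without computation.

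More seriously, if you carry out the Lyapunov--Schmidt reduction you sketch, the sign comes out \emph{positive}. Writing $u=s\phi_1+w$ with $w\perp\phi_1$, $\eta=\eta_2 s^2+o(s^2)$, and $\sinh u=u+\tfrac{1}{6}u^3+\dots$, the order-$s^3$ equation is $-\Delta w-\lambda_1 w=-\eta_2 s^3\phi_1+\tfrac{\lambda_1}{6}s^3\phi_1^3+o(s^3)$, and the Fredholm solvability condition gives $\eta_2=\lambda_1\int_\Omega\phi_1^4/\bigl(6\int_\Omega\phi_1^2\bigr)>0$, i.e.\ $\eta''(0)>0$ and the bifurcating branch lies in $\{\eta>0\}$. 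Two independent checks corroborate this: testing \eqref{scalar_NPBE} against $\phi_1>0$ shows that a small positive solution satisfies $\lambda_1\int u\phi_1=(\lambda_1-\eta)\int\sinh(u)\,\phi_1>(\lambda_1-\eta)\int u\phi_1$, forcing $\eta>0$; and in the explicit case $\Omega=(0,\pi)$ the stiffening oscillator $u''=-(1-\eta)\sinh u$ has half-period strictly below $\pi/\sqrt{1-\eta}$ at positive amplitude, so matching the Dirichlet length $\pi$ again forces $\eta>0$. Hence the missing step cannot be filled in the direction your conclusion requires, and it is in direct tension with the paper's claim $\eta''(0)<0$: as computed above, the small nontrivial solutions appear for $\eta\in(0,\eta^*]$, the side on which \Cref{h2,h3} hold (this does not contradict \Cref{Schauder}, since its uniqueness radius $M_0$ degenerates as $\eta\to 0^+$ because $C_H$ blows up). Before relying on the stated parameter range you should redo the direction-of-bifurcation computation carefully; as it stands, your proposal has a gap exactly at the decisive point, and filling it honestly appears to reverse the sign in the statement.
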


\begin{proof}
	Define \(F:H^2(\Omega)\cap H^1_0(\Omega) \times \mathbb R \to L^2(\Omega)\) as 
	\begin{equation}
		F(u,\eta) = -\Delta u + (\eta - \lambda_1) \sinh(u).
	\end{equation}
	Assumptions \((i)\) and \((ii)\) are clearly satisfied. Note that \(D_u F(0,0) = -\Delta - \lambda_1\). Thus assumption \((iii)\) follows from the Fredholm properties of \(-\Delta\) and from the fact that \(\lambda_1\) is the principal eigenvalue of \(-\Delta\). Let \(\hat{v}_0\) be be the eigenfunction corresponding to \(\lambda_1\). Then assumption \((iv)\) states that \(D_{u\eta}^2 F(0,0) \hat{v}_0 = \hat{v}_0 \notin R(D_u F(0,0))\). This should hold since if \(\hat{v}_0 \in R(D_u F(0,0))\), then there is a generalized eigenfunction for \(\lambda_1\) which contradicts the simplicity of \(\lambda_1\). Hence, we can apply \Cref{CR_theorem} to \(F(u,\eta) = 0\) to get a nontrivial continuously differentiable curve
	\begin{equation}\label{curve}
		\{(u(s), \eta(s)) \mid s\in(-\delta,\delta)\}.
	\end{equation}
	with \((u(0), \eta(0)) = (0,0)\).
	
	One can also compute derivatives of \(\eta(s)\) (see \cite[\S 1.6]{kielhofer2011bifurcation} for details) to get that \(\eta'(0)  = 0\) and \(\eta''(0) < 0\). Thus it is possible to choose \(\delta\) small enough so that \(\eta(s)\) is strictly decreasing on \(s\in[0,\delta)\). Given \(c>0\), we can find \(s^*>0\) small enough to get \(\|u(s)\|_{H^2} \leq c\) for all \(s\in (0, s^*]\) by the continuity of the curve. Since \(s\mapsto \eta(s)\) is strictly decreasing on \((0,s^*]\), we can invert this mapping to get \(\eta\mapsto s(\eta)\) for \(\eta\in [\eta^*,0)\) where \(\eta^* = \eta(s^*).\) Therefore, for each \(\eta\in [\eta^*,0)\) we have a solution of \Cref{scalar_NPBE}, given by \(u = u(s(\eta))\) such that \(\|u\|_{H^2}<c\).
\end{proof}

\begin{remark}
	There are two non-trivial solutions to \Cref{scalar_NPBE} for \(\eta< 0\) sufficiently close to zero: one for \(s>0\) and one for \(s<0\). In fact, from the oddness of \(\sinh(u)\), if \(u\) is a solution of \Cref{scalar_NPBE} then so is \(-u\).
\end{remark}

Comparing \Cref{nonunique_solutions} with \Cref{Schauder} shows that uniqueness cannot be guaranteed without \Cref{h2,h3}.  

We may also lose uniqueness of solutions if $(f,g)$ become too large. We construct an example of nPBE that admits multiple solutions. By construction, this family of nPBEs fails to satisfy the invertibility condition given in \Cref{h3} and/or the smallness assumption on $(f,g) \in L^2(\Omega)\times H^{\frac{3}{2}}(\partial \Omega)$. In particular, this example is consistent with the well-known uniqueness result of \cite{kwong1989uniqueness}.

We wish to obtain a radial solution $u(x) = u(|x|) = y(r)$, where $r=|x|\geq 0$, to \Cref{npb} where $\epsilon=1$ for simplicity and $\kappa = i\tilde{\kappa} \in i \mathbb{R}$ on domain $\Omega = B(0,R)\subseteq \mathbb{R}^d$ for $R>0, d\geq 1$ and $f(x) = \lambda \in \mathbb{R}$, $g(x) = \sinh^{-1}(\frac{\lambda}{\tilde{\kappa}^2})$. In the polar coordinate, our example reduces to an ODE
\begin{equation}\label{nonlinearBessel}
\begin{split}
		r y^{\prime\prime}+(d-1)y^\prime + \tilde{\kappa}^2 r\sinh y&=r\lambda,\: r \in (0,R)\\
		y(R) &= \sinh^{-1}\Big(\frac{\lambda}{\tilde{\kappa}^2}\Big).
\end{split}
\end{equation}
where it is clear that the constant function $r\mapsto \sinh^{-1}\Big(\frac{\lambda}{\tilde{\kappa}^2}\Big)$ is a trivial solution. Since \Cref{nonlinearBessel} is symmetric under $r\mapsto -r$, we may consider $\lambda \geq 0$. It is also clear that $(f,g) \in L^2(\Omega)\times H^{\frac{3}{2}}(\partial\Omega)$ can be taken as large as possible (in norm) by taking $\lambda$ arbitrarily large. Furthermore, we note that \Cref{h3} is violated when $R \gg 1$ depending on $\tilde{\kappa}$. To elaborate, fix $\tilde{\kappa}>0$. If \Cref{h3} holds, then $\tilde{\kappa}^2 \leq \mu < \lambda_1 = \frac{C_B}{R^2}$. Hence if $R >\frac{C_B}{\tilde{\kappa}}$, then \Cref{h3} cannot hold.

\begin{proposition}\label{nonunique}
	Let $d\geq 1, \tilde{\kappa}>0, \lambda \geq 0$. Then, there exists a non-trivial solution to \Cref{nonlinearBessel} with $R>\frac{C_B}{\tilde{\kappa}}$.
\end{proposition}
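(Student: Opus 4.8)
The plan is to construct a non-constant \emph{radial} solution by a shooting argument, which sidesteps the supercritical growth of $\sinh$ that would obstruct a variational treatment in $H^1$ for $d\ge 3$. Write $p(y):=\tilde\kappa^2\sinh y-\lambda$ and $c:=\sinh^{-1}(\lambda/\tilde\kappa^2)\ge 0$, so that $p(c)=0$; then \eqref{nonlinearBessel} together with the regularity requirement at the origin is the initial value problem $y''+\tfrac{d-1}{r}y'+p(y)=0$, $y(0)=a$, $y'(0)=0$, and we must meet the endpoint condition $y(R;a)=c$. The constant $y\equiv c$ is the trivial solution, so the goal is to produce $a\ne c$ with $y(R;a)=c$. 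Two standard ingredients come first. (i) \textbf{Global existence and continuity.} The energy $E(r):=\tfrac12 y'(r)^2+V(y(r))$ with $V(y):=\tilde\kappa^2\cosh y-\lambda y$ obeys $E'(r)=-\tfrac{d-1}{r}y'(r)^2\le 0$; since $V$ is strictly convex with $V(y)\to+\infty$ as $|y|\to\infty$, the trajectory $y(r;a)$ stays in the compact sublevel set $\{V\le V(a)\}$, so the IVP has a unique global solution and $a\mapsto(y(\cdot;a),y'(\cdot;a))$ is continuous into $C^1([0,R])$ by standard continuous dependence. (ii) \textbf{Linearization at $c$.} Setting $y=c+\epsilon$ yields, to leading order, $\epsilon''+\tfrac{d-1}{r}\epsilon'+\beta\epsilon=0$ with $\beta:=V''(c)=\tilde\kappa^2\cosh c\ge\tilde\kappa^2$; the solution regular at $0$ is the rescaled Bessel function $r^{1-d/2}J_{d/2-1}(\sqrt\beta r)$, whose zeros in $(0,R)$ are $\{\,j_{d/2-1,k}/\sqrt\beta : j_{d/2-1,k}<\sqrt\beta R\,\}$. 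Because the hypothesis $R>C_B/\tilde\kappa$ is exactly the statement $\lambda_1(B_R)=(j_{d/2-1,1}/R)^2<\tilde\kappa^2\le\beta$, this set is nonempty; call its cardinality $m\ge 1$. (If $\sqrt\beta R$ equals some $j_{d/2-1,k}$ the linearized operator is degenerate; then one obtains non-trivial solutions directly from the Crandall--Rabinowitz theorem exactly as in \Cref{nonunique_solutions}, so assume this does not occur.) Then by continuous dependence $(y(\cdot;a)-c)/(a-c)$ converges uniformly on $[0,R]$ to the above Bessel profile as $a\to c$, so for $0<|a-c|<\delta_0$ the function $y(\cdot;a)-c$ has exactly $m$ simple zeros in $(0,R)$ and $y(R;a)\ne c$.

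Define the node count $Z(a):=\#\{\,r\in(0,R): y(r;a)=c\,\}$ for $a\ne c$. A non-constant solution cannot have a double zero of $y-c$: if $y(r_0)=c$ and $y'(r_0)=0$ then $y\equiv c$ by uniqueness at the equilibrium $(c,0)$. Hence, as $a$ varies, the zeros of $y(\cdot;a)-c$ move continuously and cannot be created or destroyed in the interior, nor can one reach the endpoint $r=0$ (where $y(0)=a\ne c$); a zero can only enter or leave $(0,R)$ through the endpoint $r=R$, i.e. exactly when $y(R;a)$ passes through $c$. Therefore $Z$ is constant on each connected component of $\{a:a\ne c,\ y(R;a)\ne c\}$, and by the previous paragraph $Z\equiv m$ on a punctured neighbourhood of $c$.

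The crux is to show $Z(a)\to\infty$ as $a\to+\infty$. This is where the super-quadratic (in fact exponential) growth of $V$ is essential: for an oscillator in a potential steeper than quadratic the period shrinks to zero as the amplitude grows, so over the \emph{fixed} interval $(0,R)$ one sees arbitrarily many oscillations. Quantitatively, writing the equation for $\psi:=y(\cdot;a)-c$ as $\psi''+\tfrac{d-1}{r}\psi'+Q_a(r)\psi=0$ with $Q_a(r)=\frac{p(y(r))-p(c)}{y(r)-c}$, one has $Q_a(r)$ large whenever $|y(r)|$ is large, and a Sturm comparison of $\psi$ against $v''+\tfrac{d-1}{r}v'+q\,v=0$ with $q$ large forces many nodes of $\psi$; the energy inequality from (i) bounds how quickly the amplitude — hence the size of $Q_a$ on the high-amplitude arcs — can decay, so that enough short-period oscillations occur before $r$ reaches $R$. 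For $d=1$ this is immediate from the explicit period integral $T(E)=2\int\frac{dy}{\sqrt{2(E-V(y))}}\to 0$ as $E\to\infty$; for $d\ge 2$ the damping term only removes energy monotonically, so the first several high-amplitude, short-period oscillations still fit inside $(0,R)$. I expect this amplitude/damping bookkeeping (making "$Z(a)\to\infty$" rigorous uniformly in $d$) to be the main obstacle in the proof; everything else is routine ODE theory.

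Granting $Z(a)\to\infty$: since $Z\equiv m$ on a one-sided neighbourhood of $c$ but $Z$ is unbounded on $(c,\infty)$, and $Z$ can change value only where $y(R;a)=c$, there exists $a_*>c$ with $y(R;a_*)=c$. Then $u(x):=y(|x|;a_*)$ is a radial $C^2$ solution of \eqref{nonlinearBessel} on $B(0,R)$ with $u|_{\partial B(0,R)}=c=g$, and it is non-trivial because $a_*\ne c$ forces $u\not\equiv c$. One can in fact arrange $Z(a_*)>m$, yielding a solution with strictly more nodes than the linearized profile; and using the oddness $\sinh(-y)=-\sinh y$ together with the symmetry under $\lambda\mapsto-\lambda$, a companion non-trivial solution is produced on the branch $a<c$, recovering the two-solution picture of the remark following \Cref{nonunique_solutions}.
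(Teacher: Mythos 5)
Your plan attacks a stronger statement than \Cref{nonunique} actually asserts, and its central step is missing. In the paper, $R$ is \emph{not} fixed in advance: the proof (via \Cref{NonuniqueNonlinearBessel}) constructs one global radial solution of the initial value problem with $y(0)=c$, $y'(0)=0$ (handling the singular coefficient $\tfrac{d-1}{r}$ by an $\epsilon$-regularization of the vector field, uniform Hamiltonian bounds $\tfrac{d}{dr}H\le 0$, and Arzel\`a--Ascoli/weak-compactness passage to the limit), observes that the orbit oscillates around the equilibrium $\bigl(\sinh^{-1}(\lambda/\tilde\kappa^2),0\bigr)$, and then simply \emph{chooses} $R$ to be one of the infinitely many crossing radii $R_n\to\infty$, taking $R_n>\frac{C_B}{\tilde\kappa}$. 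You instead fix $R$ and try to solve the two-point problem by shooting in the amplitude $a$, which requires the node count $Z(a)$ to become unbounded as $a\to+\infty$. You explicitly leave that step unproved ("the main obstacle"), so the proposal has a genuine gap at its crux; and the heuristic you offer for it ("steeper-than-quadratic potential $\Rightarrow$ period tends to zero, so arbitrarily many oscillations fit in $(0,R)$") is the \emph{subcritical} intuition and is unreliable here. For $d\ge 3$ the nonlinearity $\sinh$ is supercritical in the sense relevant to radial shooting: the singular damping near $r=0$ can dissipate the energy of a large-amplitude plunge before any oscillation occurs, and for exponential nonlinearities of Gelfand type in high dimensions the shooting solution from large central values is known \emph{not} to acquire more and more nodes. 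The paper's own closing remark (the Derrick--Pohozaev argument showing the pure supercritical tail of $\sinh$ admits no nontrivial zero-boundary solution on star-shaped domains) is a warning that fixed-$R$ solvability from large shooting heights is delicate at best; in any case it is nowhere near "routine ODE theory," and without it your argument does not close.

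Two smaller points. First, your appeals to "standard" existence, uniqueness and continuous dependence for the IVP at $r=0$ gloss over the $1/r$ singularity; this is precisely the technical content the paper spends most of its proof on (the $\epsilon$-regularization in \Cref{NonuniqueNonlinearBessel}), and in your scheme you additionally need differentiability of the flow in $a$ at the singular origin to justify the linearized Bessel profile and the initial node count $m$. Second, your fallback for the degenerate case $\sqrt{\beta}\,R=j_{d/2-1,k}$ ("apply Crandall--Rabinowitz exactly as in \Cref{nonunique_solutions}") does not transfer as stated: in the appendix the bifurcation parameter $\eta$ comes with a trivial branch $u\equiv 0$ for all $\eta$, whereas in your fixed-$R$, fixed-$\tilde\kappa$, fixed-$\lambda$ setting there is no parameter left to vary. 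If you want to salvage the write-up with minimal effort, drop the fixed-$R$ shooting altogether: prove global existence of the radial IVP solution through the singular origin, show the damped Hamiltonian orbit crosses the level $y=\sinh^{-1}(\lambda/\tilde\kappa^2)$ at an unbounded sequence of radii, and take $R$ to be any such crossing radius exceeding $C_B/\tilde\kappa$; that is exactly the paper's route and it avoids the unbounded-node-count claim entirely.
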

	
Reducing \Cref{nonlinearBessel} into a first-order ODE by introducing $w = y^\prime$, we obtain \begin{equation}\label{phaseportrait}
		\begin{pmatrix}
			y\\
			w
		\end{pmatrix}^\prime = F(r,y,w)\coloneqq
		\begin{pmatrix}
			w\\
			-\tilde{\kappa}^2\sinh y - (d-1)\frac{w}{r} + \lambda
		\end{pmatrix}.
	\end{equation}
	
	For $d=1$, \Cref{phaseportrait} admits an autonomous Hamiltonian vector field where the Hamiltonian is given by 
	\[
		H(y,w) = \frac{w^2}{2} + \tilde{\kappa}^2(\cosh y -1) - \lambda y.    
	\]
	Since the level sets of $H$ are a collection of closed one-dimensional curves, all solutions are global and periodic. The inner curves have lower values of $H$ than the outer curves. Indeed, the global minimum of $H$ occurs at $P=(\sinh^{-1}\Big(\frac{\lambda}{\tilde{\kappa}^2}\Big),0)$ where $H(P)\leq 0$ with the equality if and only if $\lambda = 0$. Hence for each initial datum $\begin{pmatrix}
		c\\
		0
	\end{pmatrix}$, there exists a unique solution $y$ to \Cref{nonlinearBessel} where $y(R) = 0$ for infinitely many $R>0$. We include a phase portrait where the solutions lie on the curves of constant Hamiltonian.
	
	\begin{figure}[H]
	\begin{tikzpicture}
	
		\node at (0,0) {\includegraphics[scale=0.3, trim = 0.5cm 0.15cm 0.05cm 0.05cm, clip]{./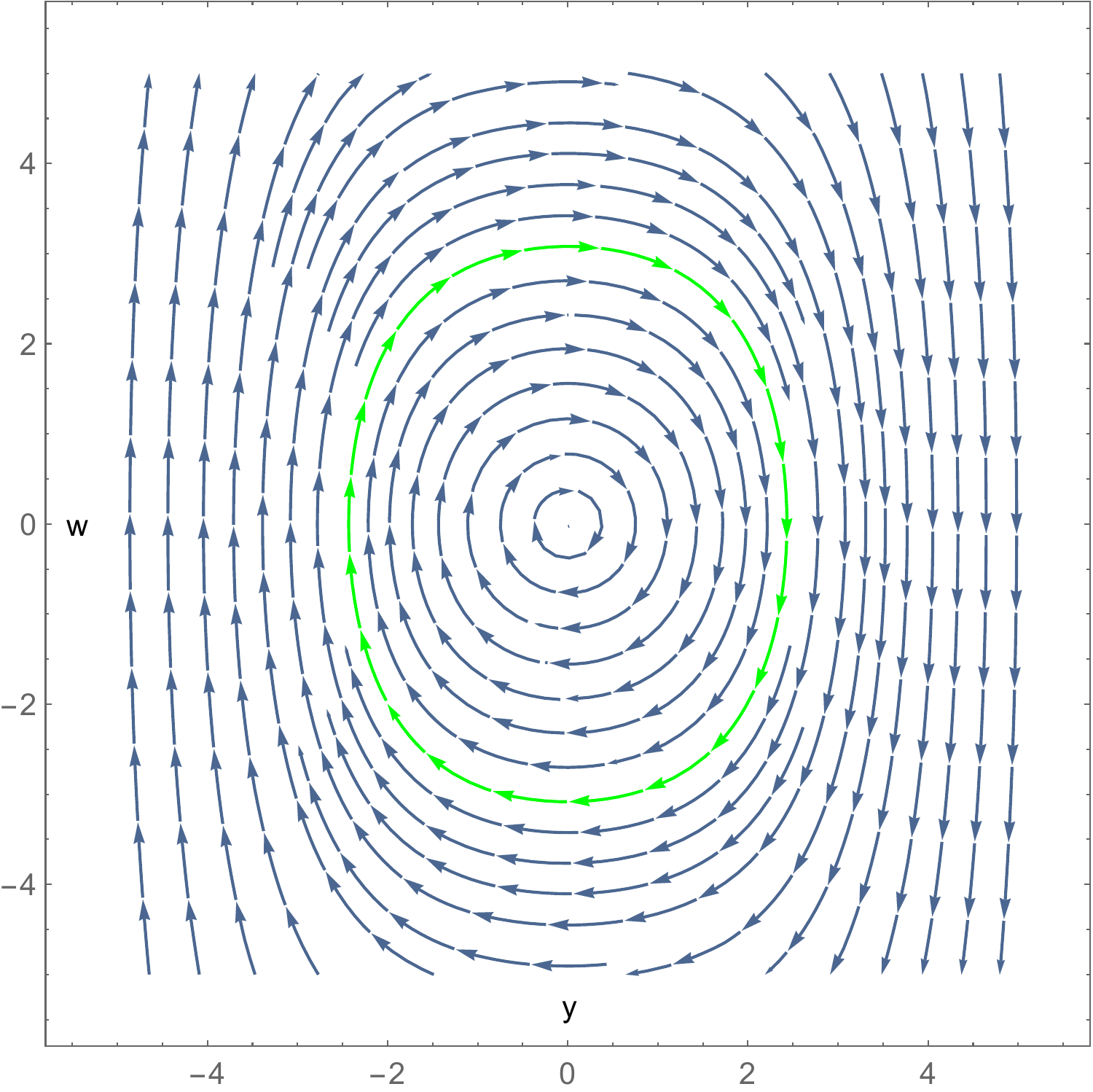}};

		\fill [white] (-0.5,-3.4) rectangle (0.5,-3.1);
		\fill [white] (-3.5,-1) rectangle (-3.1,1);
		
		\node at (-3.25,0.15) {$w$};
		\node at (0.1,-3.25) {$y$};

	    \node at (8.5,0)	{\includegraphics[scale=0.3, trim = 0.5cm 0.15cm 0.05cm 0.05cm, clip]{./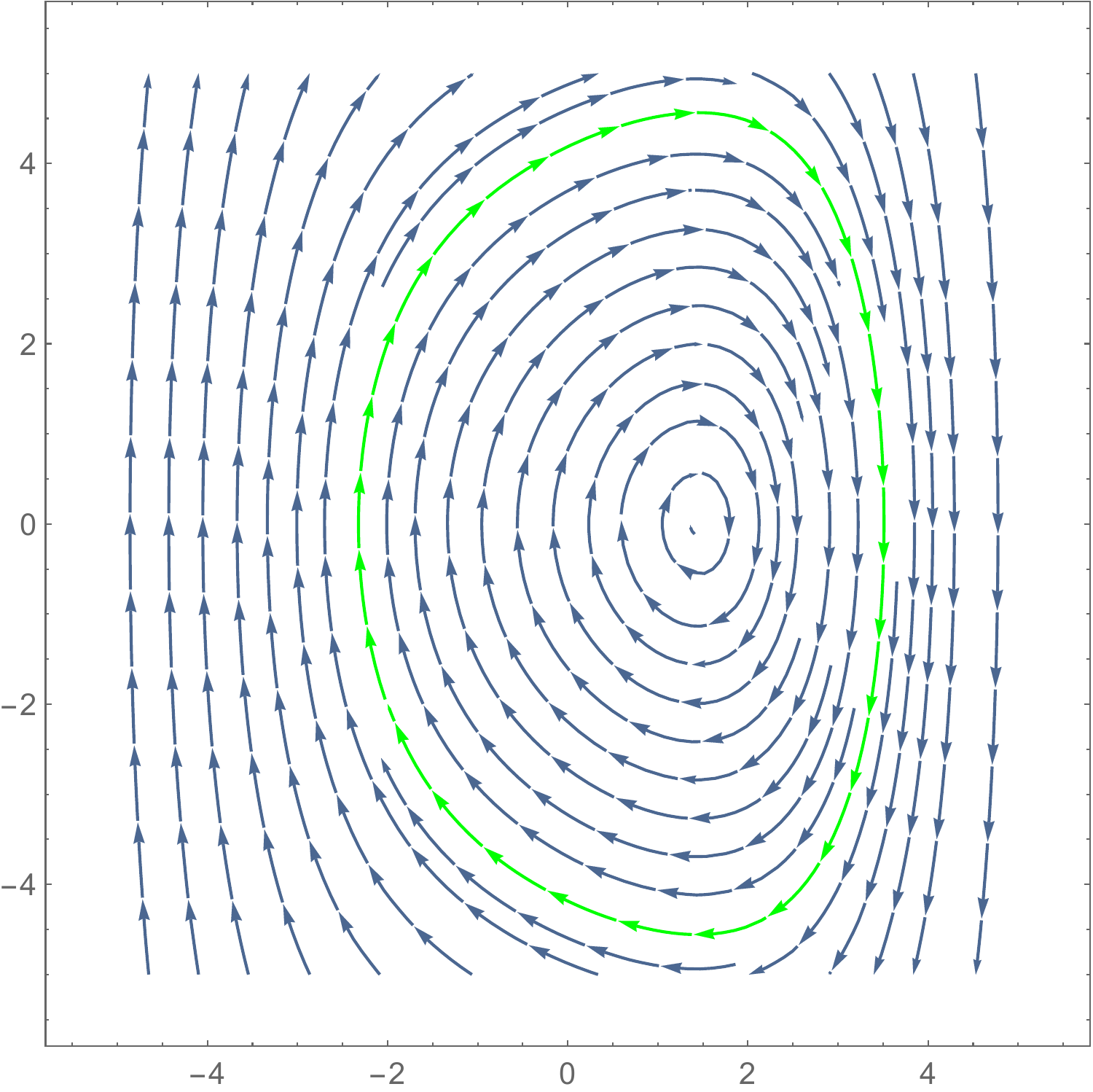}};

		\end{tikzpicture}
		\caption{Vector fields of \Cref{phaseportrait} with $d=1,\tilde{\kappa}=1$ with the left plot portraying $\lambda=0$, and the right $\lambda=2$.}
	\end{figure}

For $d \geq 2$, the vector field corresponding to \Cref{phaseportrait} is non-autonomous, and $F$ in \Cref{phaseportrait} is not well-defined at $r=0$ where our initial data are given. We regularize the ODE so that the regularized vector field is continuous (in $r$) near $r=0$, and show that the limiting solution satisfies \Cref{nonlinearBessel}. We solve an ODE that is slightly more general than \Cref{nonlinearBessel}. We use the notations of \Cref{nonunique}. 

\begin{lemma}\label{NonuniqueNonlinearBessel}
    For every $A \geq 0$ and $c\in \mathbb{R}$, there exists $R>\frac{C_B}{\tilde{\kappa}}$ and $y \in C^\infty_{\mathrm{loc}}((0,\infty);\mathbb{R})$ such that $y$ satisfies
\begin{equation}\label{NonuniqueNonlinearBessel2}
\begin{split}
    r y^{\prime\prime}+Ay^\prime + \tilde{\kappa}^2 r\sinh y&=r\lambda,\: r \in (0,\infty),\\
    \lim\limits_{r \to 0+} y(r) = c,\: \lim\limits_{r \to 0+} y^\prime(0+) = 0,\: y(R) &= \sinh^{-1}\Big(\frac{\lambda}{\tilde{\kappa}^2}\Big).
    \end{split}
\end{equation}
\end{lemma}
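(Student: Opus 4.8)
The plan is to realize $y$ as a limit of solutions of \emph{regularized}, non-singular initial value problems, and then to locate $R$ by an oscillation (Sturm) argument. First I would remove the singularity at $r=0$: for $\delta>0$ replace the coefficient $A/r$ by $A/(r+\delta)$ and solve
\[
y_\delta'' + \tfrac{A}{r+\delta}\,y_\delta' + \tilde{\kappa}^2\sinh y_\delta = \lambda,\qquad y_\delta(0)=c,\ y_\delta'(0)=0,
\]
which has a unique local $C^\infty$ solution by Picard--Lindel\"of (for $A=0$ no regularization is needed). To upgrade this to a global solution with bounds uniform in $\delta$, I would use the energy $E_\delta(r):=\tfrac12(y_\delta'(r))^2+V(y_\delta(r))$ with $V(\xi):=\tilde{\kappa}^2(\cosh\xi-1)-\lambda\xi$; differentiating and substituting the equation gives the dissipation identity $E_\delta'(r)=-\tfrac{A}{r+\delta}(y_\delta'(r))^2\le 0$, hence $E_\delta(r)\le E_\delta(0)=V(c)$ for all $r,\delta$. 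Since $V$ is coercive and bounded below, this confines $y_\delta$ to a fixed compact interval $[\xi_-,\xi_+]$ and bounds $y_\delta'$ uniformly; the equation then bounds $y_\delta''$ uniformly on each $[\epsilon,T]\subset(0,\infty)$, so there is no finite-$r$ blow-up and $\{y_\delta\}$ is precompact in $C^1_{\mathrm{loc}}((0,\infty))$. Extracting a locally uniformly convergent subsequence and bootstrapping the equation yields $y\in C^\infty_{\mathrm{loc}}((0,\infty))$ solving the differential equation of \eqref{NonuniqueNonlinearBessel2}. For the behaviour at $0$: integrating $\big((r+\delta)^A y_\delta'\big)'=(r+\delta)^A(\lambda-\tilde{\kappa}^2\sinh y_\delta)$ from $0$ and using $y_\delta'(0)=0$, then letting $\delta\to0$, gives $r^A y'(r)=\int_0^r t^A(\lambda-\tilde{\kappa}^2\sinh y(t))\,dt$; since the integrand is bounded, $|y'(r)|\le \tfrac{C}{A+1}\,r\to 0$, so $y'(0^+)=0$, and passing to the limit in $y_\delta(r)=c+\int_0^r y_\delta'$ (dominated convergence) gives $y(r)=c+\int_0^r y'$ and hence $y(0^+)=c$.

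It remains to produce $R>C_B/\tilde{\kappa}$ with $y(R)=\sinh^{-1}(\lambda/\tilde{\kappa}^2)=:y_\infty$, the unique critical point (and minimiser) of $V$. If $c=y_\infty$ take $y\equiv y_\infty$. Otherwise set $z:=y-y_\infty\not\equiv 0$, which, using $\tilde{\kappa}^2\sinh y_\infty=\lambda$, solves $z''+\tfrac{A}{r}z'+\tilde{\kappa}^2\,q(r)\,z=0$ on $(0,\infty)$, where $q(r):=\big(\sinh(z(r)+y_\infty)-\sinh y_\infty\big)/z(r)$, extended by $\cosh y_\infty$ at zeros of $z$. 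Because $\xi\mapsto\sinh\xi$ is strictly increasing we have $q>0$, and because $z$ takes values in the fixed compact interval from the energy bound, $q$ is continuous on all of $(0,\infty)$ and bounded below by some $m_0>0$. Writing this equation and the comparison equation $\phi''+\tfrac{A}{r}\phi'+\tilde{\kappa}^2 m_0\,\phi=0$ in self-adjoint form with common weight $r^A$ (so that $r^A\tilde{\kappa}^2 q(r)\ge r^A\tilde{\kappa}^2 m_0$), and noting that the substitution $\phi=r^{-A/2}\psi$ turns the comparison equation into $\psi''+\big(\tilde{\kappa}^2 m_0+\tfrac{A(2-A)}{4r^2}\big)\psi=0$, whose coefficient is $\ge\tfrac12\tilde{\kappa}^2 m_0$ for $r$ large, so that $\psi$ (hence $\phi$) has zeros accumulating at $+\infty$, the Sturm--Picone comparison theorem forces $z$ to vanish between consecutive zeros of $\phi$. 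Thus $z$ has infinitely many zeros in $(0,\infty)$; they are simple by uniqueness for the regular ODE away from $0$ (which is also why $z\not\equiv0$ is consistent with vanishing infinitely often), and choosing any such zero $R>C_B/\tilde{\kappa}$ finishes the proof.

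The main obstacle is the final step: the naive expectation that $z$ decays monotonically to $0$ is false, so one genuinely needs an oscillation theorem, and the delicate point is to verify that the comparison is legitimate, i.e.\ that the nonlinear term $\tilde{\kappa}^2(\sinh(z+y_\infty)-\sinh y_\infty)$ can be written as $\tilde{\kappa}^2 q(r)z$ with $q$ a genuine continuous function bounded below by a positive constant on the \emph{whole} half-line; this is exactly what the uniform energy bound from the first step supplies. (All cases are handled uniformly: for $A=0$ the scheme reduces to the closed level curves of the conserved energy described before the lemma, while for $A=d-1\ge 1$ and either sign of $\lambda$ the Bessel-type comparison above applies verbatim.)
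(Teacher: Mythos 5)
Your proposal is correct, and its first half is essentially the paper's argument: the same regularization of the singular damping term (the paper writes it as $(r+\epsilon)y''+Ay'+\tilde\kappa^2(r+\epsilon)\sinh y=(r+\epsilon)\lambda$, which is your $y''+\tfrac{A}{r+\delta}y'+\cdots$), the same Hamiltonian/energy $H(y,w)=\tfrac{w^2}{2}+\tilde\kappa^2(\cosh y-1)-\lambda y$ with the dissipation identity $\tfrac{d}{dr}H=-\tfrac{A w^2}{r+\epsilon}\le 0$ giving uniform confinement and global existence, and a compactness passage to a smooth limit solving the equation on $(0,\infty)$. Where you diverge is in the technical route and, more substantially, in the endgame. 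For the limit passage and the boundary behaviour at $r=0$ the paper runs a heavier argument (weak $L^2$ convergence of $w_{\epsilon_k}$, uniform $L^2$ bounds on $y_{\epsilon_k}''$, repeated Arzel\`a--Ascoli, identification of $y'$ via the auxiliary function $(\cdot+\epsilon_k)y_{\epsilon_k}'$), whereas your integrating-factor identity $\bigl((r+\delta)^A y_\delta'\bigr)'=(r+\delta)^A(\lambda-\tilde\kappa^2\sinh y_\delta)$ yields $r^A y'(r)=\int_0^r t^A(\lambda-\tilde\kappa^2\sinh y)\,dt$ and hence $y'(0^+)=0$, $y(0^+)=c$ more directly; this is a cleaner but equivalent path. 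For the existence of $R$ the paper only asserts, ``from the phase portrait analysis, the solution exhibits an oscillatory behavior,'' and concludes the crossings (it even writes $y(R_n)=0$ where the lemma requires $y(R_n)=\sinh^{-1}(\lambda/\tilde\kappa^2)$, which is the value you correctly target). Your Sturm--Picone comparison for $z=y-y_\infty$, with $q(r)=\bigl(\sinh(z+y_\infty)-\sinh y_\infty\bigr)/z=\cosh(\xi(r))\ge 1$ (so you may simply take $m_0=1$) and the Liouville substitution $\phi=r^{-A/2}\psi$, turns that informal phase-portrait claim into a genuine oscillation theorem valid for all $A\ge 0$ and both signs of $\lambda$; this is a real strengthening of the paper's final step, and the energy bound you established earlier is exactly what makes $q$ well defined and bounded on the whole half-line, as you note. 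The only cosmetic remark is that you could apply the substitution directly to the $z$-equation and compare with a constant-coefficient oscillator, bypassing the auxiliary $\phi$, but the argument as written is sound.
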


\begin{proof}[Proof of \Cref{nonunique}]
    Set $A = d-1$.
\end{proof}

\begin{proof}[Proof of \Cref{NonuniqueNonlinearBessel}]
    The $A=0$ case is equal to that when $d=1$, and therefore assume $A>0$. Moreover, assume $c \neq \sinh^{-1}\Big(\frac{\lambda}{\tilde{\kappa}^2}\Big)$ since it yields a trivial solution. For $\epsilon>0$, consider the perturbed ODE:
\begin{equation}\label{perturbedBessel}
\begin{split}
    (r+\epsilon)y_\epsilon^{\prime\prime} + Ay_\epsilon^\prime + \tilde{\kappa}^2(r+\epsilon) \sinh y_\epsilon &= (r+\epsilon)\lambda,\: r \in [-\frac{\epsilon}{2},\infty),\\
    y_\epsilon(0) = c,\: y_\epsilon^\prime(0) &= 0,
    \end{split}
\end{equation}
which, after setting $w_\epsilon = y_\epsilon^\prime$, reduces to

\[
   \begin{pmatrix}
			y_\epsilon\\
			w_\epsilon
		\end{pmatrix}^\prime = F_\epsilon(r,y_\epsilon,w_\epsilon)\coloneqq
		\begin{pmatrix}
			w_\epsilon\\
			-\tilde{\kappa}^2\sinh y_\epsilon - \frac{A w_\epsilon}{r+\epsilon} - \lambda
		\end{pmatrix}. 
\]

Since $F_\epsilon$ is smooth in $r$ near $r=0$ and locally Lipschitz in $(y,w)$, there exists $T_\epsilon \in (0, \frac{\epsilon}{2})$ and $y_\epsilon \in C([-T_\epsilon,T_\epsilon];\mathbb{R})\cap C_{\mathrm{loc}}^\infty((-T_\epsilon,T_\epsilon);\mathbb{R})$ such that $y_\epsilon$ is a unique solution to \Cref{perturbedBessel}. In the maximal interval of existence, 
$\begin{pmatrix}
y_\epsilon\\
w_\epsilon
\end{pmatrix}$
satisfies
\[
\frac{d}{dr} H(y_\epsilon(r),w_\epsilon(r)) = w_\epsilon(r)(y_\epsilon^{\prime\prime}(r) + \tilde{\kappa}^2\sinh y_\epsilon(r)-\lambda) = - \frac{Aw_\epsilon(r)^2}{r+\epsilon} \leq 0,\: r \geq -\frac{\epsilon}{2}
\]
and therefore the forward orbit of
$\begin{pmatrix}
y_\epsilon\\
w_\epsilon
\end{pmatrix}$
is bounded in the compact subset $\{(y,w) \in \mathbb{R}^2: H(y,w) \leq H(c,0)\}$ on which $F_\epsilon$ is Lipschitz. Hence,
$\begin{pmatrix}
y_\epsilon\\
w_\epsilon
\end{pmatrix}$
can be uniquely extended globally in forward time, obeying the estimate
\begin{equation}\label{uniformbound}
    H(y_\epsilon(r),w_\epsilon(r)) \leq H(c,0),\: r \geq 0.
\end{equation}

This global bound on $|y_\epsilon|+|w_\epsilon|$ yields an existence of a limit function, since for $r_1, r_2 \geq 0$,
\[
|y_\epsilon(r_2) - y_\epsilon(r_1)| = \left|\int_{r_1}^{r_2} w_\epsilon(\rho)d\rho\right| \leq C |r_2-r_1|,    
\]
where $C>0$ is independent of $\epsilon>0$. An immediate application of Arzel\`{a}-Ascoli Theorem implies that there exists a subsequence $\epsilon_k>0$ that tends to zero (from the right) and $y \in C_{\mathrm{loc}}([0,\infty);\mathbb{R})$ such that $y_{\epsilon_k}\xrightarrow[k\to 0]{}y$ in the topology of uniform convergence on compact subsets; in particular, $y(0)=c$.

Let $T>0$. Since $\{w_{\epsilon_k}\}$ is uniformly bounded in $L^2((0,T);\mathbb{R})$ due to \Cref{uniformbound}, there exists a subsequence of $\{\epsilon_k\}$ and $w \in L^2((0,T);\mathbb{R})$ such that, possibly after relabelling the subsequence, $w_{\epsilon_k} \rightharpoonup w$ in $L^2((0,T);\mathbb{R})$. This weak convergence of derivatives and the uniform convergence $y_{\epsilon_k} \rightarrow y$ on $[0,T]$ implies that $w$ is the weak derivative of $y$. Furthermore, we have $((r+\epsilon_k)y_{\epsilon_k}^\prime)^\prime (r) = (1-A) y_{\epsilon_k}^\prime - \tilde{\kappa}^2(r+\epsilon_k) \sinh y_{\epsilon_k}(r) + (r+\epsilon_k)\lambda$ from \Cref{perturbedBessel} where the right-hand side is uniformly bounded in $L^2((0,T);\mathbb{R})$. Another application of the Arzel\`{a}-Ascoli Theorem implies that there exists $Y \in C([0,T];\mathbb{R})$ such that $(\cdot+\epsilon_k)y_{\epsilon_k}^\prime \xrightarrow[k\to \infty]{} Y$ in $C([0,T];\mathbb{R})$, possibly after relabelling the subsequence, and follows $y_{\epsilon_k}^\prime \xrightarrow[k \to \infty]{} \frac{Y}{r}$ in $C([\delta,T];\mathbb{R})$ for every $\delta>0$, and therefore we identify $w(r)$ with a continuous function $\frac{Y(r)}{r}$ on $(0,T)$; indeed, $w = y^\prime$ classically on $(0,T)$. Yet another application of \Cref{uniformbound} and the triangle inequality $|w(r)| \leq |w(r)-y_{\epsilon_k}^\prime(r)| + |y_{\epsilon_k}^\prime(r)|$ yields the bound $|w(r)| \leq M$ for some $M>0$ on $(0,T)$. 

Since $y_{\epsilon_k}$ is a classical solution to \Cref{perturbedBessel}, it is also a weak solution. Writing \Cref{perturbedBessel} in the weak form, integrating by parts, and taking $k \rightarrow \infty$, we obtain $ry^{\prime\prime} + Ay^\prime +\tilde{\kappa}^2 r\sinh y = r\lambda$ on $(0,T)$ in the weak sense where the distributional derivative $y^{\prime\prime}$ can be identified with a continuous function on $(0,T)$ using the equation above. Using \Cref{perturbedBessel} and the uniform convergence of $y_{\epsilon_k}$ and its derivative as $k \rightarrow \infty$, we conclude $(r+\epsilon_k) y_{\epsilon_k}^{\prime\prime} \xrightarrow[k\to \infty]{} -(Ay^\prime + \tilde{\kappa}^2r \sinh y) + r\lambda = ry^{\prime\prime}$ uniformly on $[\delta,T]$, and therefore $y_{\epsilon_k}^{\prime\prime} \xrightarrow[k\to \infty]{}y^{\prime\prime}$ in $C([\delta,T];\mathbb{R})$ for every $\delta>0$. We have shown that $y_{\epsilon_k}^{(j)} \xrightarrow[k \to \infty]{} y^{(j)}$ uniformly on compact subsets of $(0,T)$ for $j=0,1,2$. Taking $k \to \infty$ from \Cref{perturbedBessel}, we conclude that $y$ satisfies the desired ODE pointwise on $(0,T)$. Since the vector field $F$ is smooth on $(0,T) \times \mathbb{R}^2$ where $T>0$ was arbitrary, we conclude $y \in C^\infty_{\mathrm{loc}}((0,\infty);\mathbb{R}))$.

Since $y^{\prime\prime}$ is continuous on $(0,T)$ and $y_{\epsilon_k}^\prime \rightharpoonup y^\prime$ in $L^2((0,T);\mathbb{R})$ as $k \to \infty$, for every $\phi \in C_c^\infty((0,T);\mathbb{R})$,
\[
    \int_0^T y_{\epsilon_k}^{\prime\prime}\phi = - \int_0^T y_{\epsilon_k}^\prime \phi^\prime \xrightarrow[k\to \infty]{} - \int_0^T y^\prime \phi^\prime = \int_0^T y^{\prime\prime}\phi.
\]
Therefore, $\{y_{\epsilon_k}^{\prime\prime}\}$ is uniformly bounded in $L^2((0,T);\mathbb{R})$, and another application of the Arzel\`{a}-Ascoli Theorem shows that there exists a convergent subsequence of $\{y_{\epsilon_k}^\prime\}$ in $C([0,T];\mathbb{R})$. Since we showed $y_{\epsilon_k}^\prime \xrightarrow[k\to \infty]{C([\delta,T];\mathbb{R})} y^\prime$ for every $\delta>0$, we conclude that $\delta$ could be taken to be zero. In particular, $y^\prime(0) = \lim\limits_{k \to \infty}y_{\epsilon_k}^\prime (0)=0$.

Finally from the phase portrait analysis, the solution $(y(r),w(r))$ exhibits an oscillatory behavior in $\mathbb{R}^2$; however, note that in the non-autonomous case, the solution curve does not lie in any curves of constant Hamiltonian due to the $y^\prime$ term. Since every closed curve of constant Hamiltonian contains the global minimum $(\sinh^{-1}\Big(\frac{\lambda}{\tilde{\kappa}^2}\Big),0)$, there exists $\{R_n\}_{n=1}^\infty$ such that $0<R_n < R_{n+1}, R_n \xrightarrow[n\to \infty]{}\infty$ such that $y(R_n)=0$.

\end{proof}

\begin{remark}
	For $d=3$, \Cref{nonlinearBessel} with $\tilde{\kappa}=1$ can be understood as a nonlinear zeroth-order spherical Bessel equation. To be more precise, a (linear) zeroth-order spherical Bessel ODE is given by
	\[
		r y^{\prime\prime} + 2 y^\prime + r y=0.    
	\]
	The two linearly independent solutions are given by
	\[
		j_0(r) = \frac{\sin r}{r};\: y_0(r) = -\frac{\cos r}{r}.
	\]
	We give plots comparing the linear and nonlinear solutions for $d=3$.
\end{remark}
	
	\begin{figure}[ht]
		\centering
\begin{tikzpicture}
        \footnotesize
     \node at (0,0) {\includegraphics[scale=0.30]{./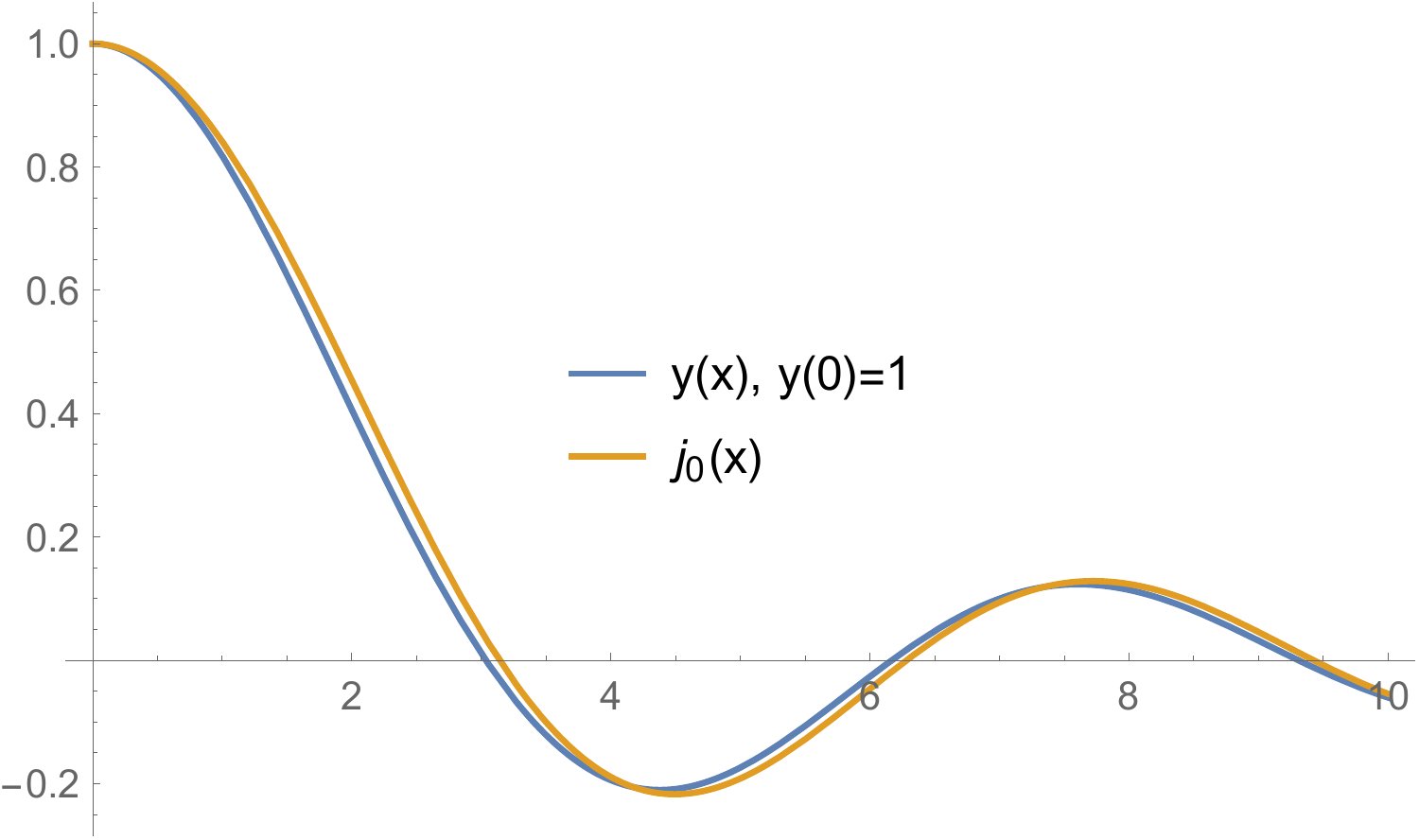}};
     \node at (8,0) {\includegraphics[scale=0.30]{./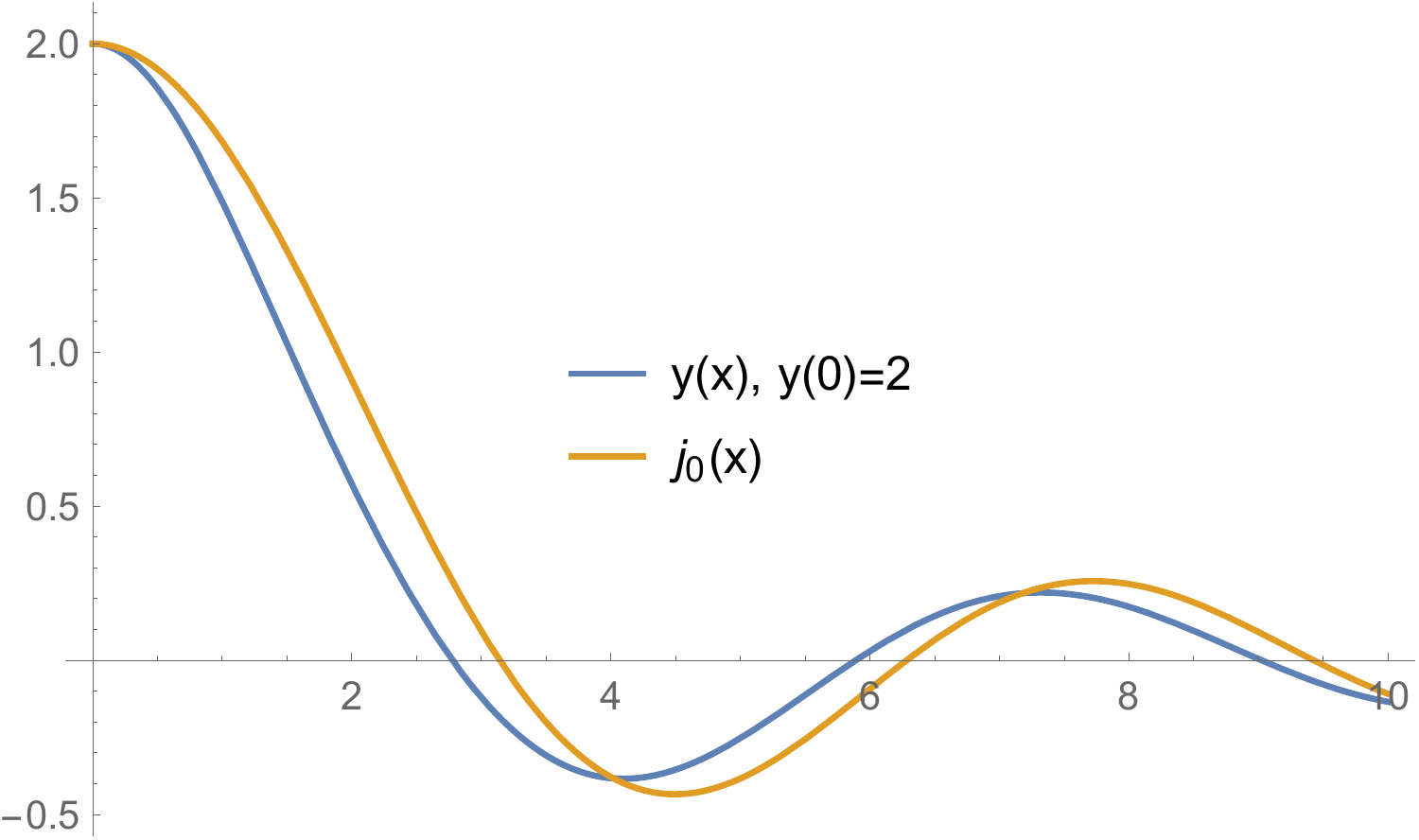}};
     \fill [white] (-0.3,-0.5) rectangle (1.25,0.5);
     \node at (0.7,0.25) {$y(x)$, $y(0)=1$};
     \node at (0.1,-0.2) {$j_0(x)$};
     \fill [white] (7.7,-0.5) rectangle (9.25,0.5);
     \node at (8.7,0.25) {$y(x)$, $y(0)=2$};
     \node at (8.1,-0.2) {$2j_0(x)$};
\end{tikzpicture}
		\caption{Comparison of solutions to \Cref{nonlinearBessel} and its corresponding linearization.}
	\end{figure}

\begin{remark}
    Intuitively, this non-uniqueness stems from the non-coercivity of the nonlinear operator $Tu \coloneqq -\epsilon\Delta u + \kappa^2\sinh u$ when $\epsilon,\kappa$ do not satisfy $\epsilon,\kappa>0$. Indeed, our choice of nonlinearity is beyond the scope of those discussed in \cite{mcleod1993uniqueness} that studies the uniqueness of radial solution to $\Delta u + f(u)=0$ when $f^\prime(0)<0$. As a simple example, let $\epsilon=1,\kappa=i$ and consider the linearized equation $u^{\prime\prime} + u =0$ in $x \in (0,\pi)$ with the boundary condition $u(0)=u(\pi)=0$. Then, we have an uncountable family of solutions $\{A\sin x\}_{A \in \mathbb{R}}$. 
    
    However, it turns out that uniqueness can be salvaged if we drop the lower orders terms of $\sinh(u)$. We state a result whose proof, based on the Derrick-Pohozaev identity, is easily adapted from that of \cite[Section 9.4, Theorem 1]{evans2010partial}; compare this to \Cref{nonunique}.
    
    Let $d\geq 3$ and $N_0>\frac{d+2}{d-2}$ be an odd integer. Suppose $u \in C^2(\overline{\Omega})$ is a classical solution to
    	\[
    	\begin{split}
    	\label{counterexample2}
		-\Delta u &= \sum_{N \geq N_0,\: N\:odd} \frac{u^N}{N!},\: x \in \Omega\\
		u &=0,\: x \in \partial \Omega,\nonumber
	\end{split}
	\]
    where $\Omega$ is a star-shaped domain containing $0 \in \mathbb{R}^d$ with $\partial \Omega \in C^1$. Then, $u=0$ in $\overline{\Omega}$. 
\end{remark}
\end{appendices}

\noindent \textbf{Acknowledgements:} This material is based upon work supported by the
  National Science Foundation (NSF) under Grant No. 1736392.  Research
  reported in this technical report was supported in part by the
  National Institute of General Medical Sciences (NIGMS) of the
  National Institutes of Health under award number 1R01GM131409-03. The first author was partially funded by the NSF/RTG postdoctoral fellowship DMS-1840260.

\bibliographystyle{abbrv}
\bibliography{pdesref,citations,nPBEapplications,nPBEapplicationsGray}
\end{document}